\documentclass{amsart}
\usepackage{graphicx}
\usepackage{amssymb}
\vfuzz2pt 
\hfuzz2pt 
\newtheorem{thm}{Theorem}[section]
\newtheorem{cor}[thm]{Corollary}
\newtheorem{lem}[thm]{Lemma}
\newtheorem{prop}[thm]{Proposition}
\theoremstyle{definition}
\newtheorem{defn}[thm]{Definition}
\newtheorem{eg}[thm]{Example}
\theoremstyle{remark}
\newtheorem{rem}[thm]{Remark}
\newtheorem*{acknowledgement}{Acknowledgements}
\numberwithin{equation}{section}

\newcommand{\hf}{\frac{1}{2}}
\newcommand{\thd}{\frac{1}{3}}

\newcommand{\sh}{\mathrm{sh}}
\newcommand{\cl}{\mathrm{cl}}
\newcommand{\Tw}{\mathrm{Tw}}
\newcommand{\RR}{\mathbb{R}}

\newcommand{\QQ}{\mathbb{Q}}
\newcommand{\ZZ}{\mathbb{Z}}

\newcommand{\To}{\longrightarrow}

\newcommand{\A}{\mathcal{A}}
\newcommand{\KK}{\mathbb{K}}
\newcommand{\K}{\mathcal{K}}
\renewcommand{\L}{\mathcal{L}}
\newcommand{\R}{\mathcal{R}}
\newcommand{\E}{\mathcal{E}}
\newcommand{\C}{\mathcal{C}}
\newcommand{\D}{\mathcal{D}}
\newcommand{\M}{\mathcal{M}}
\newcommand{\I}{\mathcal{I}}
\newcommand{\F}{\mathcal{F}}
\newcommand{\Q}{\mathcal{Q}}
\renewcommand{\O}{\mathcal{O}}
\newcommand{\X}{\mathfrak{X}}
\newcommand{\g}{\mathfrak{g}}
\newcommand{\ug}{\underline{\mathfrak{g}}}
\newcommand{\T}{\mathbb{T}}
\newcommand{\metr}{\langle\cdot,\cdot\rangle}
\newcommand{\met}[2]{\langle#1,#2\rangle}
\newcommand{\brac}{[\cdot,\cdot]}
\newcommand{\cb}[2]{[\![#1,#2]\!]}
\newcommand{\cbr}{[\![\cdot,\cdot]\!]}

\newcommand{\pbr}{\{\cdot,\cdot\}}
\newcommand{\pb}[2]{\{#1,#2\}}
\newcommand{\rbr}{(\cdot,\cdot)}
\newcommand{\dl}{\partial}
\newcommand{\Hom}{\mathrm{Hom}}
\newcommand{\Der}{\mathrm{Der}}
\newcommand{\gr}{\mathrm{gr}}
\begin{document}

\title[Courant-Dorfman algebras]{Courant-Dorfman algebras and their cohomology}%
\author{Dmitry Roytenberg}%
\address{Max Planck Institut f\"{u}r Mathematik, Vivatsgasse 7, 53111 Bonn}%
\email{roytenbe@mpim-bonn.mpg.de}%

\subjclass{Primary 16E45; Secondary 17B63, 70H45, 81T45}%
\keywords{Courant, Dorfman, differential graded}%

\date{\today}%
\dedicatory{To the memory of I.Ya. Dorfman (1948 - 1994)}%
\begin{abstract}
We introduce a new type of algebra, the Courant-Dorfman algebra.
These are to Courant algebroids what Lie-Rinehart algebras are to
Lie algebroids, or Poisson algebras to Poisson manifolds. We work
with arbitrary rings and modules, without any regularity, finiteness
or non-degeneracy assumptions. To each Courant-Dorfman algebra
$(\R,\E)$ we associate a differential graded algebra $\C(\E,\R)$ in
a functorial way by means of explicit formulas. We describe two
canonical filtrations on $\C(\E,\R)$, and derive an analogue of the
Cartan relations for derivations of $\C(\E,\R)$; we classify central
extensions of $\E$ in terms of $H^2(\E,\R)$ and study the canonical
cocycle $\Theta\in\C^3(\E,\R)$ whose class $[\Theta]$ obstructs
re-scalings of the Courant-Dorfman structure. In the nondegenerate
case, we also explicitly describe the Poisson bracket on
$\C(\E,\R)$; for Courant-Dorfman algebras associated to Courant
algebroids over finite-dimensional smooth manifolds, we prove that
the Poisson dg algebra $\C(\E,\R)$ is isomorphic to the one
constructed in \cite{Roy4-GrSymp} using graded manifolds.
\end{abstract}
\maketitle

\section{Introduction}
\subsection{Historical background}
This is the first in a series of papers devoted to the study of
Courant-Dorfman algebras. These algebraic structures first arose in
the work of Irene Dorfman \cite{Dorf87} and Ted Courant \cite{Cou} on
reduction in classical mechanics and field theory (with \cite{CouWe}
a precursor to both, ultimately leading back to \cite{Dir}). Courant
considered sections of the vector bundle $\mathbf{T}M=TM\oplus T^*M$
over a finite-dimensional $C^\infty$ manifold $M$, endowed with the
canonical pseudo-metric
$$
\met{(v_1,\alpha_1)}{(v_2,\alpha_2)}=\iota_{v_1}\alpha_2+\iota_{v_2}\alpha_1
$$
and a new bracket he introduced:
$$
\cb{(v_1,\alpha_1)}{(v_2,\alpha_2)}=(\pb{v_1}{v_2},L_{v_1}\alpha_2-L_{v_2}\alpha_1-\hf
d_0(\iota_{v_1}\alpha_2-\iota_{v_2}\alpha_1)),
$$
while Dorfman was working in a more general abstract setting
involving a Lie algebra $\X^1$ and a complex $\Omega$ acted upon by
the differential graded Lie algebra $T[1]\X^1=\X^1[1]\oplus\X^1$
(i.e., to each $v\in\X^1$ there are associated operators $\iota_v$
and $L_v$ on $\Omega$ satisfying the usual Cartan relations). She
considered the space $\Q=\X^1\oplus\Omega^1$ equipped with the above
pseudo-metric and a bracket given by
$$
[(v_1,\alpha_1),(v_2,\alpha_2)]=(\pb{v_1}{v_2},L_{v_1}\alpha_2-\iota_{v_2}d_0\alpha_1)
$$
Here $\pbr$ denotes the commutator of vector fields (resp. the
bracket on $\X^1$), while $d_0$ denotes the exterior derivative
(resp. the differential on $\Omega$). In both cases, a \emph{Dirac
structure} was defined to be a subbundle $D\subset\mathbf{T}M$
(resp. a subspace $\D\subset\Q$) which is maximally isotropic with
respect to $\metr$ and closed under the Courant (resp. Dorfman)
bracket; each Dirac structure defines a Poisson bracket on a
subalgebra of $C^\infty(M)$ (resp. a subspace of $\Omega^0$), thus
explaining the role of this formalism in the theory of constrained
dynamical systems, both in mechanics and field theory. We refer to \cite{Dorf} for
an excellent exposition of these ideas.

The notion of a \emph{Courant algebroid} was introduced in
\cite{LWX1} where it was used to generalize the theory of Manin
triples to Lie bialgebroids; it involved a vector bundle equipped
with a pseudo-metric, a Courant bracket and an anchor map to the
tangent bundle, satisfying a set of compatibility conditions. The
notion has since turned up in other contexts. \v{S}evera
\cite{Sev-Letters} discovered that a Courant (or Dorfman) bracket
could be twisted by a closed 3-form, as a result of which the
Courant algebroid $\mathbf{T}M$ took the place of $TM$ in Hitchin's
``generalized differential geometry" (i.e. differential geometry in the
presence of an abelian gerbe \cite{Hit}); he also noted that
transitive Courant algebroids could be used to give an
obstruction-theoretic interpretation of the first Pontryagin class
(this theory was fully worked out by Bressler \cite{Bres}, who also
elucidated the relation with vertex operator algebras). In general,
there is mounting evidence that Courant algebroids play the same
r\^{o}le in string theory as Poisson structures do in particle
mechanics \cite{Sev-Letters, Bres, AStr}.

\subsection{The aim and content of this paper}
In our earlier work \cite{Roy1, Roy4-GrSymp} we made an attempt to
explain Courant algebroids in terms of graded differential geometry
by constructing, for each vector bundle $E$ with a non-degenerate
pseudo-metric, a graded symplectic (super)manifold $M(E)$. We proved
that Courant algebroid structures on $E$ correspond to functions
$\Theta\in\C^3(M(E))$ obeying the Maurer-Cartan equation
$$
\pb{\Theta}{\Theta}=0
$$
The advantage of this approach is geometric clarity: after all,
graded manifolds are just manifolds with a few bells and whistles,
and our construction uses nothing more than a cotangent bundle. As a
by-product, it yielded new examples of topological sigma-models
\cite{Roy5-AKSZ}. Moreover, the graded manifold approach enabled
\v{S}evera \cite{Sev2} to envision an infinite hierarchy of graded
symplectic structures similar to the hierarchy of higher categories
(our construction in \cite{Roy4-GrSymp} is equivalent to a special
case of his).

Nevertheless, the formulation in terms of graded manifolds has
certain drawbacks. In particular, we were unable to describe the
algebra of functions $\C(M(E))$ explicitly in terms of $E$, which
made it somewhat difficult to work with: general considerations
(such as grading) would carry one a certain distance, but to go
beyond that, one had to either resort to local coordinates, or
introduce unnatural extra structure, such as a connection, which
rather spoiled the otherwise beautiful picture.

The aim of this paper is to obtain a completely explicit description
of the algebra $\C(M(E))$. We work from the outset with a
commutative algebra $\R$ and an $\R$-module $\E$ equipped with a
pseudo-metric $\metr$; these can be completely arbitrary: no
regularity or finiteness conditions are imposed on $\R$ or $\E$, nor
is $\metr$ required to be non-degenerate\footnote{This is still more
than Dorfman \cite{Dorf87} required: what she was dealing with is an
example of a structure we called \emph{hemi-strict Lie 2-algebra} in
\cite{Roy6-WL2}.}. A \emph{Courant-Dorfman algebra} consists of this
underlying structure, plus an $\E$-valued derivation $\dl$ of $\R$ and a
(Dorfman) bracket $\brac$, satisfying compatibility conditions
generalizing those defining a Courant algebroid.

Given a metric $\R$-module $(\E,\metr)$, we construct a graded
commutative $\R$-algebra $\C(\E,\R)$ whose degree-$q$ component
consists of (finite) sequences\\
$\omega=(\omega_0,\omega_1,\ldots)$, where each $\omega_k$ is an
$\R$-valued function of $q-2k$ arguments from $\E$ and $k$ arguments
from $\R$. With respect to the $\R$-arguments, $\omega_k$ is a
symmetric $k$-derivation; the behavior of $\omega_k$ under
permutations of the $\E$-arguments and the multiplication of these arguments by elements of $\R$ is
controlled by $\omega_{k+1}$. The algebra $\C(\E,\R)$ is actually a
subalgebra of the convolution algebra $\Hom(U(L),\R)$ where $L$ is a
certain graded Lie algebra.

Furthermore, every Courant-Dorfman structure on the metric module
$\E$ gives rise to a differential on the algebra $\C(\E,\R)$ for
which we give an explicit formula \eqref{eqn:diff}. The construction
is functorial with respect to (strict) morphisms of Courant-Dorfman
algebras; this is the content of our first main Theorem
\ref{thm:CD->DGA}. The resulting cochain complex, which we call the
\emph{standard complex}, is related to the Loday-Pirashvili complex
\cite{LodPir} for the Leibniz algebra $(\E,\brac)$ in a way
analogous to how the de Rham complex of a manifold is related to the
Chevalley-Eilenberg complex of its Lie algebra of vector fields.

We then conduct further investigation of the differential graded
algebra $\C(\E,\R)$. In particular, we describe natural filtrations
and subcomplexes, related to those considered in \cite{StXu} and
\cite{GinGru}, which we expect to be an important tool in cohomology
computations; derive commutation relations among certain derivations
of $\C(\E,\R)$, similar to the well-known Cartan relations among
contractions and Lie derivatives by vector fields; classify central
extensions of the Courant-Dorfman algebra $\E$ in terms of
$H^2(\E,\R)$. We also consider the canonical cocycle
$\Theta=(\Theta_0,\Theta_1)\in\C^3(\E,\R)$ given by the formula:
\begin{eqnarray*}
\Theta_0(e_1,e_2,e_3)&=&\met{[e_1,e_2]}{e_3}\\
\Theta_1(e;f)&=&-\rho(e)f
\end{eqnarray*}
generalizing the Cartan 3-form on a quadratic Lie algebra appearing
in the Chern-Simons theory.

When the pseudo-metric $\metr$ is non-degenerate, the algebra
$\C(\E,\R)$ has a Poisson bracket for which we also give an explicit
formula (\eqref{eqn:bullet}, \eqref{eqn:diamond} and
\eqref{eqn:pb}); the differential is then Hamiltonian: $d=-\pb{\Theta}{\cdot}$ (Theorem \ref{thm:pb}).
Finally, for Courant-Dorfman algebras coming from finite-dimensional vector
bundles, we prove (Theorem \ref{thm:isom_gr_man}) that the
differential graded Poisson algebra $\C(\E,\R)$ is isomorphic to the
algebra $\C(M(E))$ constructed in \cite{Roy4-GrSymp}. The
isomorphism associates to every $\omega\in\C^p(M(E))$ the sequence
$\Phi\omega=((\Phi\omega)_0,(\Phi\omega_1),\ldots)\in\C^p(\E,\R)$
where
$$
(\Phi\omega)_k(e_1,\ldots,e_{p-2k};f_1,\ldots,f_k)=
$$
$$
=(-1)^{\frac{(p-2k)(p-2k-1)}{2}}\{\cdots\{\omega,e_1^\flat\},\cdots\},e_{p-2k}^\flat\},f_1\},\cdots\},f_k\}
$$
where $e^\flat=\met{e}{\cdot}$. Under this isomorphism, our
canonical cocycle $\Theta$ corresponds to the one constructed in
\emph{loc. cit.} In fact, this formula is the main creative input
for this work: all the other formulas were ``reverse-engineered" from
this one and then shown to be valid in the general case. This construction
should be compared to Voronov's ``higher derived brackets" \cite{Vor5}.

We would like to emphasize that, apart from overcoming the drawbacks
of the graded manifold formulation mentioned above and being
completely explicit, our constructions apply in a much more general
setting where extra structures, such as local coordinates or
connections, may not be available. One point worth mentioning is
that the algebra $\C(\E,\R)$ is generally \emph{not} freely
generated over $\R$ (in a sense which we hope to eventually make
precise, it is as free as possible in the presence of $\metr$);
rather, it has a filtration such that the associated graded algebra
is free graded commutative over $\R$. The situation is the following:
when $\R$ and $\E$ satisfy some finiteness
conditions and $\metr$ is non-degenerate, the set of isomorphisms of
$\C(\E,\R)$ with the free algebra
$\gr\C(\E,\R)=S_\R(\X^1[-2]\oplus\E^\vee[-1])$ is in 1-1
correspondence with the set of splittings of the extension of
$\R$-modules
$$
\Lambda^2_\R\E^\vee\rightarrowtail\C^2(\E,\R)\twoheadrightarrow\Der(\R,\R)=:\X^1
$$
This is known as an \emph{Atiyah sequence}; its splitting is nothing
but a metric connection on $\E$. The set of splittings may be empty,
but when $\R$ is ``smooth" (in the sense that the module
$\Der(\R,\R)$ is projective), splittings do exist and form a torsor
under $\Omega^1\otimes_\R\Lambda^2_\R\E^\vee$; nevertheless, it is
important to keep in mind that, when working with a smooth scheme or
a complex manifold, such splittings generally exist only locally,
while a global splitting is obstructed by the Atiyah class. In
\cite{Roy4-GrSymp} we wrote down the Poisson bracket on
$S_\R(\X^1[-2]\oplus\E^\vee[-1])$ corresponding to the canonical one
on $\C(M(E))$ under a given metric connection $\nabla$; we have
since been informed that this bracket had been known to physicists
under the name of \emph{Rothstein bracket} \cite{Roth}. For an approach using
this formulation we refer to \cite{KelWald}; our work here was
motivated by the desire to avoid any unnatural choices.

\subsection{The sequel(s)} We plan to write (at least) two sequels to
this paper, in which we address several issues not covered here. In
the first one, we introduce a closed 2-form $\Xi$ on the algebra
$\C(\E,\R)$; it corresponds to the one we constructed on $\C(M(E))$
in \cite{Roy4-GrSymp} (even for degenerate $\metr$). We use this
extra structure to, on the one hand, restrict the class of morphisms
of differential graded algebras to those which also preserve this
structure, and on the other hand, to expand the class of morphisms
of Courant-Dorfman algebras to include lax morphisms, so as to make
the functor from Theorem \ref{thm:CD->DGA} fully faithful. We will
also consider morphisms of Courant-Dorfman algebras over different
base rings. Furthermore, the 2-form $\Xi$ gives rise to a Poisson
bracket on a certain subalgebra of $\C(\E,\R)$ by a graded version
of Dirac's formalism \cite{Dir}.

In the second sequel, we consider the general notion of a module
over a Courant-Dorfman algebra, based on the notion of a dg module
over the dg algebra $\C(\E,\R)$ (possibly with some extra conditions
involving $\Xi$), and study the (derived) category of these modules.
One such module is the \emph{adjoint module} $\C(\E,\E)$ consisting
of derivations of $\C(\E,\R)$ preserving $\Xi$. It forms a
differential graded Lie algebra under the commutator bracket; this
dg Lie algebra controls the deformation theory of Courant-Dorfman
structures on a fixed underlying metric module, and is analogous to
the dg Lie algebra controlling deformations of Lie-Rinehart
structures on a fixed underlying module, described in \cite{CraMoe}.

Eventually, we hope to be able to re-write the whole story using an
approach involving nested operads.

\subsection{On relation with other work and choice of terminology}
Our definition of a Courant-Dorfman algebra is very similar to
Weinstein's ``$(R,\A) C$-algebras" \cite{We5}, except for his
non-degeneracy assumption and use of Courant, rather than Dorfman,
brackets. Keller and Waldmann \cite{KelWald} gave an ``algebraic"
definition of a Courant algebroid, while still retaining the
finiteness, regularity and non-degeneracy assumptions, and obtained
formulas similar to some of those derived here. Our description of
the algebra $\C(\E,\R)$ has the same spirit as the formulas
describing exterior powers of adjoint and co-adjoint representations
of a Lie algebroid in (\cite{CamMar}, Example 3.26 and subsection
4.2).

We feel justified in our choice of the term ``Courant-Dorfman
algebra": not only is it natural and easy to remember, but it also
recognizes the contributions of two mathematicians (one of whom has
long since left active research while the other is, sadly, no longer
with us) to the subject that has since grown in scope far beyond
what they had envisioned.

\subsection{Organization of the paper} The paper is organized as
follows. In Section \ref{sec:def} we define Courant-Dorfman
algebras, derive some of their basic properties and give a number of
examples of these structures, emphasizing connection with the
various areas of mathematics where they arise; Section
\ref{sec:conv} is devoted to the preliminary construction of a
convolution algebra associated to a graded Lie algebra; Section
\ref{sec:complex} is the heart of the paper, where we construct the
differential graded algebra $\C(\E,\R)$ and study its properties;
Section \ref{sec:appl} is devoted to classifying central extensions
and studying the canonical class of a Courant-Dorfman algebra; in
Section \ref{sec:pb} we consider the non-degenerate case and derive
formulas for the Poisson bracket; here we also elucidate the
relation of our constructions with earlier work on Courant
algebroids. Finally, Section \ref{sec:concl} is devoted to
concluding remarks and speculations. For the convenience of the
reader we have also included several appendices where we have
collected the necessary facts about derivations, K\"{a}hler
differentials, Lie-Rinehart algebras and Leibniz algebras.

\begin{acknowledgement}
I thank Marius Crainic for urging me to work out explicit formulas
for the cochains in the standard complex of a Courant algebroid. The
results were presented at the workshop ``Aspects alg\'{e}briques et
g\'{e}om\'{e}trique des alg\`{e}bres de Lie" at Universit\'{e} de
Haute Alsace (Mulhouse, June 12-14, 2008), and at the Poisson 2008 conference at EPFL
(Lausanne, July 7-11, 2008); I thank the organizers of both meetings for this
opportunity. The paper was written up at Universiteit Utrecht and
the Max Planck Institut f\"{u}r Mathematik in Bonn, and I thank both
institutions for providing excellent working conditions and friendly
environment. Last but not least, I thank Yvette Kosmann-Schwarzbach and the anonymous referee
for their detailed and helpful comments and suggestions for improving the manuscript.
\end{acknowledgement}

\section{Definition and basic properties}\label{sec:def}

\subsection{Conventions and notation}
We fix once and for all a commutative ring $\KK$, containing $\hf$, as our ground
ring (the condition ensures that $\KK$-linear derivations annihilate
constants and polarization identities hold). All tensor products and
$\Hom$'s are assumed to be over $\KK$; tensor products and $\Hom$'s
over other rings will be explicitly indicated by appropriate
subscripts.

By a graded module we shall always mean a collection
$\M=\{\M_i\}_{i\in\ZZ}$ of modules indexed by $\ZZ$. The dual module
$\M^\vee$ is defined by setting $\M^\vee_i=(\M_{-i})^\vee$. For a
$k\in\ZZ$, the shifted module $\M[k]$ is defined by
$\M[k]_i=\M_{k+i}$, so that $(\M[k])^\vee=(\M^\vee)[-k]$.

The (graded) commutator of operators will always be denoted by
$\pbr$.

\subsection{Courant-Dorfman algebras and related categories}

\begin{defn}\label{def:DA}
A \emph{Courant-Dorfman algebra} consists of the following data:
\begin{itemize}
\item a commutative $\KK$-algebra $\R$;
\item an $\R$-module $\E$;
\item a symmetric bilinear form (\emph{pseudometric}) $\metr:\E\otimes_\R\E\To\R$;
\item a derivation $\dl:\R\To\E$;
\item a \emph{Dorfman bracket} $\brac:\E\otimes\E\To\E$.
\end{itemize}
These data are required to satisfy the following conditions:
\begin{enumerate}
\item $[e_1,fe_2]=f[e_1,e_2]+\met{e_1}{\dl f} e_2$;
\item $\met{e_1}{\dl\met{e_2}{e_3}}=\met{[e_1,e_2]}{e_3}+\met{e_2}{[e_1,e_3]}$;
\item $[e_1,e_2]+[e_2,e_1]=\dl\met{e_1}{e_2}$;
\item $[e_1,[e_2,e_3]]=[[e_1,e_2],e_3]+[e_2,[e_1,e_3]]$;
\item $[\dl f,e]=0$;
\item $\met{\dl f}{\dl g}=0$
\end{enumerate}
for all $e,e_1,e_2,e_3\in\E$, $f,g\in\R$.

When only conditions (1), (2) and (3) are satisfied, we shall speak
of an \emph{almost Courant-Dorfman algebra} and treat (4), (5) and
(6) as integrability conditions.
\end{defn}

\begin{rem}
A $\KK$-module $\E$ equipped with a bracket $\brac$ satisfying
condition (4) above is called a ($\KK$-) \emph{Leibniz algebra}. For
basic facts about these algebras we refer to Appendix
\ref{app:Leibniz}.
\end{rem}

Given a Courant-Dorfman algebra, the \emph{Courant bracket} $\cbr$
is defined by the formula
$$\cb{e_1}{e_2}=\hf([e_1,e_2]-[e_2,e_1])$$

Conversely, the Dorfman bracket can be recovered from the Courant
bracket:
$$[e_1,e_2]=\cb{e_1}{e_2}+\hf\dl\met{e_1}{e_2}$$

If $\thd\in\KK$, the definition of a Courant-Dorfman algebra can be
rewritten in terms of the Courant bracket, as was done originally in
\cite{LWX1}.

\begin{defn}\label{def:nondeg}
The bilinear form $\metr$ gives rise to a map
$$(\cdot)^\flat:\E\To\E^\vee=\Hom_\R(\E,\R)$$
defined by
$$e^\flat(e')=\met{e}{e'}$$
We say $\metr$ is \emph{strongly non-degenerate} if $(\cdot)^\flat$
is an isomorphism, and call a Courant-Dorfman algebra
\emph{non-degenerate} if its bilinear form is strongly non-degenerate. In
this case the inverse map is denoted by
$$(\cdot)^\sharp:\E^\vee\To\E$$
and there is a symmetric bilinear form
$$\pbr:\E^\vee\otimes_\R\E^\vee\To\R$$
defined by
\begin{equation}\label{eqn:inversemetr}
\pb{\lambda}{\mu}=\met{\lambda^\sharp}{\mu^\sharp}
\end{equation}
for $\lambda,\mu\in\E^\vee$.
\end{defn}

\begin{rem}
For non-degenerate Courant-Dorfman algebras, it can be shown that
conditions (1), (5) and (6) of Definition \ref{def:DA} are redundant.
\end{rem}

\begin{defn}
A \emph{strict morphism} between Courant-Dorfman algebras $\E$ and
$\E'$ is a map of $\R$-modules $f:\E\to\E'$ respecting all the
operations.
\end{defn}

\begin{rem}
It is possible to define a morphism of Courant-Dorfman algebras over
different base rings, as well as weak morphisms which preserve the
operations up to coherent homotopies. For the purposes of this
paper, strict morphisms over a fixed base suffice; we shall refer to
them simply as morphisms from now on.
\end{rem}

Courant-Dorfman algebras over a fixed $\R$ form a category, which we
denote by $\mathbf{CD}_\R$.

\begin{rem}
The Courant-Dorfman structure consists of several layers of
underlying structure: the $\R$-module $\E$, the \emph{metric
$\R$-module} $(\E,\metr)$, the \emph{differential metric
$\R$-module} $(\E,\metr,\dl)$ and the $\KK$-Leibniz algebra
$(\E,\brac)$. Correspondingly, there are obvious forgetful functors
from $\mathbf{CD}_\R$ to the categories $\mathbf{Mod}_\R$,
$\mathbf{Met}_\R$, $\mathbf{dMet}_\R$ and $\mathbf{Leib}_\KK$. We
shall refer to the respective fiber categories $\mathbf{CD}_\E$,
$\mathbf{CD}_{(\E,\metr)}$ and $\mathbf{CD}_{(\E,\metr,\dl)}$ when
we wish to consider Courant-Dorfman algebra with the indicated
underlying structure fixed. We shall frequently speak of just a
Courant bracket or a Dorfman bracket, with the rest of the data
implicitly understood.
\end{rem}

\begin{defn}\label{def:CAlgd}
Given a locally ringed space $(X,\O_X)$ over $\KK$, a \emph{Courant
algebroid} over $X$ is an $\O_X$-module $\E$ equipped with a
compatible Courant-Dorfman algebra structure.
\end{defn}

\begin{rem}
Definition \ref{def:CAlgd} differs somewhat from the earlier
versions. Traditionally \cite{LWX1}, $X$ was required to be a
$C^\infty$ manifold, $\E$ locally free of finite rank (i.e. sections
of a vector bundle), and $\metr$ strongly non-degenerate; Bressler
\cite{Bres} drops the finite-rank and non-degeneracy assumptions
while still requiring that $X$ be a smooth manifold. Our definition
is equivalent to those of loc. cit. under the aforementioned
additional assumptions.
\end{rem}

\subsection{The anchor, coanchor and tangent complex.}
Let $\Omega^1=\Omega^1_\R$ be the $\R$-module of K\"{a}hler
differentials, with the universal derivation
$$d_0:\R\To\Omega^1.$$
Furthermore, let
$$
\X^1=\X^1_\R=\mathrm{Der}(\R,\R)\simeq\Hom_{\R}(\Omega^1,\R)
$$
Now, let $(\R,\E)$ be a Courant-Dorfman algebra. By the universal
property of $\Omega^1$, there is a unique map of $\R$-modules
$$\delta:\Omega^1\To\E$$
such that $\delta(d_0f)=\dl f$ (see Appendix \ref{app:kaehler}).
This map will be referred to as the \emph{coanchor}. Define further
the \emph{anchor map}
$$\rho:\E\To\X^1$$
by setting
\begin{equation}\label{eqn:anchor}
\rho(e)\cdot f=\met{e}{\dl f}
\end{equation}
for all $e\in\E$, $f\in\R$.

\begin{rem}
In a non-degenerate almost Courant-Dorfman algebra, $\dl$ can be
recovered from $\rho$, and condition (1) of Definition \ref{def:DA}
follows from (2) and (3).
\end{rem}

The condition (6) of Definition \ref{def:DA} can now be restated as
\begin{equation}\label{eqn:cx}
\rho\circ\delta=0
\end{equation}
In other words, the following is a cochain complex of $\R$-modules:
\begin{equation}\label{eqn:tancx}
\Omega^1[2]\stackrel{\delta}{\To}\E[1]\stackrel{\rho}{\To}\X^1
\end{equation}
This complex will be denoted by $\T=\T_\E$ and referred to as the
\emph{tangent complex} of the Courant-Dorfman algebra $\E$; the
differential on $\T$ will also be denoted by $\delta$ (that is,
$\delta_{-2}=\delta$, $\delta_{-1}=\rho$).

\begin{defn}
A Courant-Dorfman algebra is \emph{exact} if its tangent complex is
acyclic.
\end{defn}

The complex $\T$ has an extra structure: namely, the symmetric
bilinear form $\metr$ on $\E$ extends to a graded
\emph{skew}-symmetric bilinear map of graded $\R$-modules
$$\Xi:\T\otimes_\R\T\To\R[2]$$
if we define
\begin{equation}\label{eqn:Xi}
\Xi(v,\alpha)=\iota_v\alpha=-\Xi(\alpha,v)
\end{equation}
$$
\Xi(e_1,e_2)=\met{e_1}{e_2}
$$
for $v\in\X^1$, $\alpha\in\Omega^1$, $e_1,e_2\in\E$.

\begin{prop}
$\Xi$ is $\delta$-invariant, i.e.
\begin{equation}\label{eqn:Xi-delta-inv}
\Xi(\delta a,b)+(-1)^{\mathrm{deg}(a)}\Xi(a,\delta b)=0
\end{equation}
for all homogeneous $a,b\in\T$.
\end{prop}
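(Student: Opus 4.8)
The plan is to exploit the fact that $\Xi$ takes values in $\R[2]$, so that $\Xi(a,b)$ vanishes unless $\mathrm{deg}(a)+\mathrm{deg}(b)=-2$. Since $\delta$ raises degree by one, both terms $\Xi(\delta a,b)$ and $\Xi(a,\delta b)$ appearing in \eqref{eqn:Xi-delta-inv} can be nonzero only when $\mathrm{deg}(a)+\mathrm{deg}(b)=-3$. As $\T$ is concentrated in degrees $-2,-1,0$, the only homogeneous pairs for which the asserted identity is not automatically $0=0$ are $(a,b)\in\Omega^1\times\E$ and $(a,b)\in\E\times\Omega^1$. Thus the proposition reduces to a finite check.

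Next I would observe that the two surviving cases are interchanged by the graded skew-symmetry $\Xi(a,b)=-(-1)^{\mathrm{deg}(a)\mathrm{deg}(b)}\Xi(b,a)$: substituting this into \eqref{eqn:Xi-delta-inv} and using that $\delta$ shifts degree by one shows that the left-hand side for $(b,a)$ equals a sign times the left-hand side for $(a,b)$, so one case implies the other. It therefore suffices to verify the relation for $a=\alpha\in\Omega^1$ (degree $-2$) and $b=e\in\E$ (degree $-1$). Unwinding the definitions, $\delta\alpha\in\E$, while $\delta e=\rho(e)\in\X^1$, and after using $\Xi(\alpha,\rho(e))=-\iota_{\rho(e)}\alpha$ the relation \eqref{eqn:Xi-delta-inv} becomes
$$
\met{\delta\alpha}{e}=\iota_{\rho(e)}\alpha.
$$

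Finally I would prove this last identity. Both sides are $\R$-linear in $\alpha$---the left by $\R$-linearity of $\delta$ and of $\metr$, the right because $\iota_{\rho(e)}\in\Hom_\R(\Omega^1,\R)$---so it is enough to check it on the generators $\alpha=d_0f$, $f\in\R$. There $\delta(d_0f)=\dl f$ by the defining property of the coanchor, while $\iota_{\rho(e)}(d_0f)=\rho(e)\cdot f$; the anchor formula \eqref{eqn:anchor} together with the symmetry of $\metr$ then gives
$$
\met{\delta(d_0f)}{e}=\met{\dl f}{e}=\met{e}{\dl f}=\rho(e)\cdot f=\iota_{\rho(e)}(d_0f),
$$
completing the verification.

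The computation is entirely routine; the only points requiring care are the sign bookkeeping in the skew-symmetry reduction and confirming $\R$-linearity in $\alpha$, so that passing to the generators $d_0f$ is legitimate. In essence, the proposition simply records that $\delta$ and $\rho$ are mutually adjoint with respect to $\metr$ and the canonical pairing $\X^1\otimes_\R\Omega^1\To\R$, an adjointness built into the definition of the anchor.
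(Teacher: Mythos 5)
Your proof is correct and follows essentially the same route as the paper's: both reduce the identity \eqref{eqn:Xi-delta-inv} to the single relation $\met{e}{\delta\alpha}=\iota_{\rho(e)}\alpha$ and then observe that this follows from the definitions of $\delta$ and $\rho$ (via \eqref{eqn:anchor} and the generator check on $\alpha=d_0f$). You merely make explicit the degree bookkeeping and skew-symmetry reduction that the paper leaves implicit in the phrase ``this amounts to saying.''
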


\begin{proof}
This amounts to saying that, for all $e\in\E$ and
$\alpha\in\Omega^1$, one has
\begin{equation}\label{eqn:rho-delta}
\met{e}{\delta\alpha}=\iota_{\rho(e)}\alpha,
\end{equation}
which is just a restatement of the definitions.
\end{proof}

\begin{prop}\label{prop:anc-is-hom}
The anchor $\rho$ is a homomorphism of Leibniz algebras.
\end{prop}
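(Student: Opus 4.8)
The plan is to show that $\rho$ intertwines the Dorfman bracket on $\E$ with the commutator bracket $\pbr$ on $\X^1=\Der(\R,\R)$, i.e. that $\rho([e_1,e_2])=\pb{\rho(e_1)}{\rho(e_2)}$ for all $e_1,e_2\in\E$. Since $(\E,\brac)$ is a Leibniz algebra by condition (4) of Definition \ref{def:DA} and $(\X^1,\pbr)$ is a Lie algebra (hence a Leibniz algebra), this equality is precisely the assertion that $\rho$ is a homomorphism of Leibniz algebras. Both sides of the identity are derivations of $\R$, so it suffices to check that they agree after evaluation on an arbitrary $f\in\R$.

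First I would expand the right-hand side using the defining formula \eqref{eqn:anchor} for the anchor. Writing out the commutator of derivations and using $\rho(e)\cdot g=\met{e}{\dl g}$ twice gives
\[
\pb{\rho(e_1)}{\rho(e_2)}\cdot f=\rho(e_1)\bigl(\met{e_2}{\dl f}\bigr)-\rho(e_2)\bigl(\met{e_1}{\dl f}\bigr)=\met{e_1}{\dl\met{e_2}{\dl f}}-\met{e_2}{\dl\met{e_1}{\dl f}}.
\]
The key step is then to apply condition (2) of Definition \ref{def:DA} with $e_3=\dl f$ to the first term, which yields
\[
\met{e_1}{\dl\met{e_2}{\dl f}}=\met{[e_1,e_2]}{\dl f}+\met{e_2}{[e_1,\dl f]}.
\]
Here $\met{[e_1,e_2]}{\dl f}=\rho([e_1,e_2])\cdot f$ is exactly the left-hand side we are after, so it remains only to identify the correction term $\met{e_2}{[e_1,\dl f]}$ with the surviving term $\met{e_2}{\dl\met{e_1}{\dl f}}$.

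The one point requiring care is this correction term, and it is handled by the antisymmetric part of the structure rather than by condition (2): combining condition (3) with condition (5) gives $[e_1,\dl f]=\dl\met{e_1}{\dl f}-[\dl f,e_1]=\dl\met{e_1}{\dl f}$, whence $\met{e_2}{[e_1,\dl f]}=\met{e_2}{\dl\met{e_1}{\dl f}}$ cancels exactly against the remaining term in the expansion. Substituting back leaves $\pb{\rho(e_1)}{\rho(e_2)}\cdot f=\met{[e_1,e_2]}{\dl f}=\rho([e_1,e_2])\cdot f$, and since $f$ was arbitrary this proves the proposition. I expect no serious obstacle: the argument is a direct manipulation of the axioms, and it is worth noting that it invokes only conditions (2), (3) and (5), not the Leibniz identity (4) itself — the latter enters only through the background fact that the source $(\E,\brac)$ is a Leibniz algebra at all.
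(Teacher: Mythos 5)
Your proposal is correct and follows essentially the same route as the paper: both derive $[e,\dl f]=\dl\met{e}{\dl f}$ from conditions (3) and (5) and then apply condition (2) with $\dl f$ inserted as one of the arguments, the only (immaterial) difference being which slot of condition (2) receives $\dl f$. Your closing remark that only (2), (3) and (5) are invoked is also accurate and consistent with the paper's argument.
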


\begin{proof}
First, observe that conditions (3) and (5) of Definition \ref{def:DA}
imply
\begin{equation}\label{eqn:ax5'}
[e,\dl f]=\dl\met{e}{\dl f}
\end{equation}
Furthermore, by (2),
$$\met{e_1}{\dl\met{\dl f}{e_2}}=\met{[e_1,\dl f]}{e_2}+\met{\dl f}{[e_1,e_2]}$$
Combining these and using the definition of $\rho$, we immediately
get
$$\rho([e_1,e_2])\cdot f=\rho(e_1)\cdot(\rho(e_2)\cdot f)-\rho(e_2)\cdot(\rho(e_1)\cdot f),$$
as claimed.
\end{proof}

\begin{cor}
Let $(\R,\E)$ be a Courant-Dorfman algebra, and let $\K=\ker{\rho}$.
Then $(\R,\K)$ is a Courant-Dorfman subalgebra (with zero anchor).
\end{cor}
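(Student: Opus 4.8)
The plan is to verify that each of the five pieces of Courant--Dorfman data restricts to $\K$ and that the six axioms of Definition \ref{def:DA}, which hold for all elements of $\E$, hold a fortiori on $\K$; thus the only substantive work is to check that $\K$ is closed under the relevant operations. Concretely, I would establish three closure properties: that $\K$ is an $\R$-submodule of $\E$, that the derivation $\dl$ takes values in $\K$, and that the Dorfman bracket of two elements of $\K$ again lies in $\K$.

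For the first, I would observe that $\rho$ is $\R$-linear: from the defining formula \eqref{eqn:anchor} and the $\R$-bilinearity of $\metr$ one has $\rho(fe)\cdot g=\met{fe}{\dl g}=f\met{e}{\dl g}=f(\rho(e)\cdot g)$, so $\rho(fe)=f\rho(e)$. Hence $\K=\ker\rho$ is stable under multiplication by $\R$ (and trivially under addition), making it an $\R$-submodule. For the second, the key is condition (6) of Definition \ref{def:DA}, restated as $\rho\circ\delta=0$ in \eqref{eqn:cx}: since $\dl f=\delta(d_0f)$, we get $\rho(\dl f)=0$, i.e. $\dl f\in\K$ for every $f\in\R$, so $\dl$ corestricts to a derivation $\R\To\K$. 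For the third, I would invoke Proposition \ref{prop:anc-is-hom}, which says $\rho$ is a homomorphism of Leibniz algebras: if $e_1,e_2\in\K$ then $\rho([e_1,e_2])=\pb{\rho(e_1)}{\rho(e_2)}=0$, whence $[e_1,e_2]\in\K$.

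With these closures in hand, the pseudometric restricts to a symmetric bilinear form on $\K$ simply by restriction, and all the data of a Courant--Dorfman algebra are thereby defined on $(\R,\K)$. Axioms (1)--(6) then hold automatically, being identities valid for arbitrary elements of $\E$ and hence in particular for elements of $\K$; for the identities taking values in $\E$ one notes that both sides again lie in $\K$ by the closure properties just established, while the scalar-valued identities require nothing further. Finally, the anchor of the subalgebra is the restriction $\rho|_\K$, which vanishes by the very definition $\K=\ker\rho$, so the subalgebra indeed has zero anchor. I expect no genuine obstacle here: the entire argument is a routine inheritance of structure, and its only real content is the recognition that closure of the bracket is precisely the homomorphism property of $\rho$ proved in the preceding proposition, while closure under $\dl$ is exactly condition (6).
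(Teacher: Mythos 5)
Your proof is correct and takes essentially the same route as the paper's: closure of $\K$ under the Dorfman bracket via Proposition \ref{prop:anc-is-hom}, and containment of the image of $\dl$ in $\K$ via \eqref{eqn:cx}. The paper states exactly these two facts and nothing more; your extra checks (the $\R$-linearity of $\rho$ making $\K$ a submodule, and the axioms holding by restriction) are the routine details it leaves implicit.
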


\begin{proof}
By Proposition \ref{prop:anc-is-hom}, $\K$ is closed under $\brac$;
by \eqref{eqn:cx}, the image of $\dl$ is contained in $\K$.
\end{proof}

\begin{prop}\label{prop:coanc-is-ideal}
The image $\delta\Omega^1$ is a two-sided ideal with respect to the
Dorfman bracket $\brac$. More precisely, the following identities
hold:
\begin{eqnarray*}
[e,\delta\alpha]&=&\delta L_{\rho(e)}\alpha \\
{[\delta\alpha,e]}&=&\delta(-\iota_{\rho(e)}d_0\alpha)
\end{eqnarray*}
In particular,
\begin{eqnarray*}
[\delta\alpha,\delta\beta]&=&0
\end{eqnarray*}
for all $\alpha,\beta\in\Omega^1$, $e\in\E$.
\end{prop}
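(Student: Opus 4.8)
The plan is to establish the two displayed formulas
$[e,\delta\alpha]=\delta L_{\rho(e)}\alpha$ and
$[\delta\alpha,e]=\delta(-\iota_{\rho(e)}d_0\alpha)$,
since the final assertion $[\delta\alpha,\delta\beta]=0$ follows immediately from the first one together with \eqref{eqn:cx}: writing $\delta\beta=\delta\beta$ and applying the first formula gives $[e,\delta\beta]=\delta L_{\rho(e)}\beta$, and taking $e=\delta\alpha$ we get $\rho(e)=\rho(\delta\alpha)=0$ by $\rho\circ\delta=0$, whence $L_{\rho(\delta\alpha)}\beta=0$ and the bracket vanishes. So the real content is the two module-level identities, and both reduce to checking them on exact differentials $\alpha=d_0g$, because $\Omega^1$ is generated as an $\R$-module by the $d_0g$ and both sides are $\R$-linear in $\alpha$ up to the correction terms that the derivation property supplies.

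First I would treat the case $\alpha=d_0 f$, where $\delta\alpha=\delta(d_0 f)=\dl f$ by the defining property of the coanchor. For the first identity I must show $[e,\dl f]=\delta L_{\rho(e)}d_0 f$. The right-hand side simplifies using the Cartan-type identity $L_{\rho(e)}d_0 f=d_0(\iota_{\rho(e)}d_0 f)=d_0(\rho(e)\cdot f)$, so $\delta L_{\rho(e)}d_0 f=\dl(\rho(e)\cdot f)=\dl\met{e}{\dl f}$ by the definition \eqref{eqn:anchor} of the anchor. But $[e,\dl f]=\dl\met{e}{\dl f}$ is exactly \eqref{eqn:ax5'}, already proved in the course of Proposition \ref{prop:anc-is-hom}. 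This settles the first identity on exact forms. For the second identity on $\alpha=d_0 f$, I note $-\iota_{\rho(e)}d_0 d_0 f=0$ since $d_0\circ d_0=0$, so the right-hand side is $\delta(0)=0$, and I must check $[\dl f,e]=0$, which is precisely condition (5) of Definition \ref{def:DA}.

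Next I would promote these from exact forms to general $\alpha\in\Omega^1$ by a derivation/Leibniz argument in the $\R$-module variable. Since every element of $\Omega^1$ is an $\R$-linear combination of exact differentials, it suffices to check that both sides of each identity transform the same way when $\alpha$ is replaced by $f\alpha$. On the left, I use condition (1) of Definition \ref{def:DA} together with the $\R$-linearity of $\delta$ to compute $[e,\delta(f\alpha)]=[e,f\,\delta\alpha]=f[e,\delta\alpha]+\met{e}{\dl f}\,\delta\alpha$, and for the reversed bracket the analogue uses condition (3) to relate $[\delta(f\alpha),e]$ to $[e,\delta(f\alpha)]$. On the right, I verify the matching Leibniz rule for the operators $L_{\rho(e)}$ and $\iota_{\rho(e)}d_0$ acting on $\Omega^1$, namely $L_{\rho(e)}(f\alpha)=(\rho(e)\cdot f)\alpha+f L_{\rho(e)}\alpha$ and the corresponding rule for $\iota_{\rho(e)}d_0$; these are standard Cartan-calculus identities for Kähler differentials and hold after applying $\delta$ because $\delta$ is $\R$-linear and $\met{e}{\dl f}=\rho(e)\cdot f$.

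The main obstacle I anticipate is bookkeeping consistency in the inductive/Leibniz step: the correction term $\met{e}{\dl f}\,\delta\alpha$ on the bracket side must be shown to agree exactly, including signs and the placement of $\delta$, with the term $\delta\big((\rho(e)\cdot f)\alpha\big)=\met{e}{\dl f}\,\delta\alpha$ produced on the Cartan side, and similarly for the $\iota_{\rho(e)}d_0$ version where an extra $d_0 f$ term appears and one must confirm it is annihilated or absorbed correctly. Verifying that the two Leibniz corrections match is routine but is where an error would most likely hide; once that agreement is confirmed on the generating set of exact forms, the identities extend to all of $\Omega^1$ by $\R$-linearity, and the ideal property together with the vanishing $[\delta\alpha,\delta\beta]=0$ follows as indicated above.
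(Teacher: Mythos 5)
Your proof is correct and follows essentially the same route as the paper: the first identity is verified on generators $fd_0g$ of $\Omega^1$ using condition (1) together with $[e,\dl f]=\dl\met{e}{\dl f}$ (formula \eqref{eqn:ax5'}), the second identity comes from condition (3) combined with the Cartan formula $L_{\rho(e)}=d_0\iota_{\rho(e)}+\iota_{\rho(e)}d_0$, and the vanishing $[\delta\alpha,\delta\beta]=0$ follows from $\rho\circ\delta=0$. The only (harmless) redundancy is your separate check of the second identity on exact forms via condition (5): your own Leibniz step --- condition (3), the already-established first identity, and Cartan's formula --- proves it for \emph{all} $\alpha$ in one stroke, which is exactly how the paper argues.
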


\begin{proof}
For the first identity, it suffices to consider $\alpha$ of the form
$fd_0g$. The identity then follows by applying condition (1) and the
formula \eqref{eqn:ax5'}. The second identity then follows
immediately from condition (3) and the Cartan identity. The last
identity is then a consequence of \eqref{eqn:cx}.
\end{proof}

\begin{cor}\label{cor:Ebar}
Let $\bar\E=\E/\delta\Omega^1$. Then $(\R,\bar\E)$ is a Lie-Rinehart
algebra under the induced bracket and anchor; furthermore, the
pseudometric $\metr$ induces one on $\bar\K=\ker\bar\rho$ which is,
moreover, $\bar\E$-invariant (with respect to the natural action of
$\bar\E$ on $\bar\K$, see Appendix \ref{app:LR}).
\end{cor}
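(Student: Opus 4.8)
The plan is to establish Corollary \ref{cor:Ebar} in three stages, corresponding to its three assertions: first that $\bar\E$ inherits a Lie-Rinehart structure, then that $\metr$ descends to $\bar\K$, and finally that the induced pseudometric is $\bar\E$-invariant. The essential inputs are already at hand: Proposition \ref{prop:coanc-is-ideal} shows $\delta\Omega^1$ is a two-sided Dorfman ideal, Proposition \ref{prop:anc-is-hom} gives that $\rho$ is a Leibniz homomorphism, and \eqref{eqn:cx} gives $\rho\circ\delta=0$. Throughout I would lean on the basic theory of Lie-Rinehart algebras and the natural action construction collected in Appendices \ref{app:LR} and \ref{app:Leibniz}.

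For the first claim, I would argue that the Dorfman bracket on $\bar\E$ becomes skew-symmetric after passing to the quotient. Indeed, condition (3) of Definition \ref{def:DA} reads $[e_1,e_2]+[e_2,e_1]=\dl\met{e_1}{e_2}$, and since $\dl=\delta\circ d_0$ the right-hand side lies in $\delta\Omega^1$; hence the induced bracket on $\bar\E$ is skew. Because $\delta\Omega^1$ is a two-sided ideal (Proposition \ref{prop:coanc-is-ideal}), the bracket is well-defined on the quotient, and the Leibniz (Jacobi) identity (4) descends automatically. For the anchor, \eqref{eqn:cx} shows $\rho$ vanishes on $\delta\Omega^1$, so $\rho$ factors through a well-defined $\bar\rho:\bar\E\to\X^1$; condition (1) descends to the Leibniz rule $[\bar e_1,f\bar e_2]=f[\bar e_1,\bar e_2]+(\bar\rho(\bar e_1)f)\bar e_2$, which together with skew-symmetry and the Jacobi identity is exactly the Lie-Rinehart axiom package.

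For the second claim, I must check that $\metr$ restricted to $\K=\ker\rho$ descends to $\bar\K=\ker\bar\rho$. The key observation is that $\met{\delta\alpha}{k}=\iota_{\rho(k)}\alpha=0$ for any $k\in\K$, by \eqref{eqn:rho-delta} and the definition of $\K$; thus $\delta\Omega^1$ is contained in the radical of $\metr$ restricted to $\K$, so the form is well-defined on $\bar\K=\K/(\K\cap\delta\Omega^1)$. (One should note that $\delta\Omega^1\subseteq\K$ by \eqref{eqn:cx}, so $\bar\K$ is indeed the image of $\K$ in $\bar\E$.) The invariance under $\bar\E$ is the content of the third claim, and here I would unwind the definition of the $\bar\E$-action on $\bar\K$ from Appendix \ref{app:LR}: this action is induced by the adjoint Dorfman bracket, and $\bar\E$-invariance of the induced pseudometric is precisely the descent of condition (2), which states $\met{e_1}{\dl\met{e_2}{e_3}}=\met{[e_1,e_2]}{e_3}+\met{e_2}{[e_1,e_3]}$.

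The main obstacle I anticipate is the last step: verifying carefully that the abstract $\bar\E$-action on $\bar\K$ supplied by the Lie-Rinehart formalism agrees with the one coming from the Dorfman bracket, and then matching the invariance identity to the descended form of condition (2). The subtlety is bookkeeping about which representative brackets land in $\K$ versus merely in $\E$, and confirming that the term $\met{e_1}{\dl\met{e_2}{e_3}}$ — which involves $\dl$ applied to a scalar — correctly encodes the $\bar\rho(\bar e_1)$-derivative of the induced form on $\bar\K$. Everything else reduces to routine verification that the relevant operations respect the ideal $\delta\Omega^1$ and hence pass to the quotient.
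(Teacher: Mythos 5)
Your proposal is correct and follows essentially the same route as the paper's own proof: Proposition \ref{prop:coanc-is-ideal} and condition (3) for the induced skew bracket, \eqref{eqn:cx} for the induced anchor, the identity $\met{e}{\delta\alpha}=\iota_{\rho(e)}\alpha=0$ on $\K$ for descent of the pseudometric, and axiom (2) for $\bar\E$-invariance. The paper's version is simply terser, leaving the quotient-compatibility checks you spell out as ``immediate''.
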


\begin{proof}
By Proposition \ref{prop:coanc-is-ideal}, the bracket on $\E$
descends to $\bar\E$; the induced bracket is skew-symmetric by
condition (3). Similarly, by \eqref{eqn:cx}, one gets the induced
anchor $\bar\rho:\bar\E\To\X^1$. The axioms for a Lie-Rinehart
algebra follow immediately from those for Courant-Dorfman algebra.

To prove the last statement, observe that
$\bar\K=\K/\delta\Omega^1$. Now, for all $e\in\K$,
$\alpha\in\Omega^1$,
\begin{equation}\label{eqn:K_ort_Omega}
\met{e}{\delta\alpha}=\iota_{\rho(e)}\alpha=0
\end{equation}
by \eqref{eqn:rho-delta}, hence $\metr$ descends to $\bar\K$. The
$\bar\E$-invariance follows from axiom (2). The equation
\eqref{eqn:K_ort_Omega} implies, in particular, that
$\delta\Omega^1$ is isotropic.
\end{proof}

\begin{rem}
Of course, $\E/\dl\R$ is always a Lie algebra (over $\KK$).
\end{rem}

\begin{defn}
Suppose $\E$ is a Courant-Dorfman algebra. An $\R$-submodule
$\D\subset\E$ is said to be a \emph{Dirac submodule} if $\D$ is
isotropic with respect to $\metr$ and is closed under $\brac$
(equivalently, under $\cbr$).
\end{defn}

\begin{prop}
If $\D$ is a Dirac submodule, $(\R,\D)$ is a Lie-Rinehart algebra
under the restriction of the anchor and bracket.
\end{prop}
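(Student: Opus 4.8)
The plan is to verify directly the axioms of a Lie-Rinehart algebra (recalled in Appendix \ref{app:LR}) for the pair $(\R,\D)$, using the restrictions to $\D$ of the Dorfman bracket $\brac$ and the anchor $\rho$. Since $\D$ is an $\R$-submodule closed under $\brac$, both operations restrict, so the only task is to check that the restricted structure satisfies the required identities. The argument runs parallel to the proof of Corollary \ref{cor:Ebar}, but is in fact simpler, because everything here happens inside $\D$ rather than in a quotient.

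First I would establish that the restricted bracket is skew-symmetric. By condition (3) of Definition \ref{def:DA} one has $[e_1,e_2]+[e_2,e_1]=\dl\met{e_1}{e_2}$; since $\D$ is isotropic, $\met{e_1}{e_2}=0$ for $e_1,e_2\in\D$, whence $[e_1,e_2]=-[e_2,e_1]$ on $\D$. (Equivalently, the Dorfman and Courant brackets coincide on an isotropic submodule.) Next, the Leibniz identity, condition (4), holds on all of $\E$ and hence on $\D$; together with skew-symmetry it upgrades the Leibniz bracket to a genuine Lie bracket, i.e. the Jacobi identity holds on $\D$ (see Appendix \ref{app:Leibniz}). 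Thus $(\D,\brac)$ is a $\KK$-Lie algebra.

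It remains to treat the anchor. Setting $\bar\rho:=\rho|_\D:\D\To\X^1$, Proposition \ref{prop:anc-is-hom} tells us that $\rho$ is a homomorphism of Leibniz algebras; since the bracket on $\D$ is now a Lie bracket, $\bar\rho$ is a homomorphism of Lie algebras into $(\X^1,\pbr)$. Finally, the module-derivation rule is read off from condition (1): using the definition \eqref{eqn:anchor} of the anchor, $[e_1,fe_2]=f[e_1,e_2]+(\rho(e_1)\cdot f)\,e_2$ for $e_1,e_2\in\D$ and $f\in\R$, which is precisely the required compatibility between bracket, anchor and the $\R$-action.

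The main obstacle is conceptual rather than computational: one must observe that isotropy is exactly what converts condition (3) into skew-symmetry, thereby collapsing the ambient Leibniz structure into a Lie structure on $\D$. Once this is noticed, every remaining identity is inherited directly from the Courant-Dorfman axioms and from Proposition \ref{prop:anc-is-hom}, and no genuinely new calculation is needed.
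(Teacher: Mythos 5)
Your proof is correct and is precisely the verification the paper leaves implicit (its entire proof is ``Clear.''): isotropy turns condition (3) into skew-symmetry, condition (4) plus skew-symmetry gives the Jacobi identity, condition (1) together with the definition \eqref{eqn:anchor} of the anchor gives the Leibniz rule, and Proposition \ref{prop:anc-is-hom} gives compatibility of the anchor with brackets. Nothing is missing.
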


\begin{proof}
Clear.
\end{proof}

Even though $\metr$ is allowed to be degenerate, even zero, it is
\emph{not} true that a Lie-Rinehart algebra is a special case of a
Courant-Dorfman algebra, because of the relation \eqref{eqn:anchor}
between the anchor and $\metr$. Nevertheless, the notion of a
morphism between a Courant-Dorfman algebra and a Lie-Rinehart
algebra does make sense.

\begin{defn}\label{def:morLR->CD}
A \emph{strict morphism} from a Lie-Rinehart algebra $\L$ to a
Courant-Dorfman algebra $\E$ is a map of $\R$-modules $p:\L\To\E$
satisfying the following conditions:
\begin{enumerate}
\item $p$ commutes with anchors and brackets;
\item $\metr\circ(p\otimes p)=0$
\end{enumerate}
\end{defn}

\begin{defn}\label{def:morCD->LR}
A \emph{strict morphism} from a Courant-Dorfman algebra $\E$ to a
Lie-Rinehart algebra $\L$ is an $\R$-module map $r:\E\To\L$
satisfying the following conditions:
\begin{enumerate}
\item $r$ commutes with anchors and brackets;
\item $r\circ\delta=0$
\end{enumerate}
\end{defn}

\begin{prop}\label{prop:morphisms}
The following are morphisms in the sense of the above definitions:
\begin{itemize}
\item the anchor $\rho:\E\To\X^1$;
\item the canonical projection $\pi:\E\To\bar{\E}$ from Corollary
\ref{cor:Ebar};
\item the inclusion $i:\D\To\E$ of a Dirac submodule.
\end{itemize}
\end{prop}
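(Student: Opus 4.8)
The plan is to verify each of the three claims directly against the appropriate definition, observing that in every case the conditions required have already been established above. The two maps $\rho$ and $\pi$ have the Courant-Dorfman algebra $\E$ as their source and a Lie-Rinehart algebra as their target, so they are tested against Definition \ref{def:morCD->LR}; the inclusion $i$ has a Lie-Rinehart algebra as its source and $\E$ as its target, so it is governed by Definition \ref{def:morLR->CD}. For the anchor $\rho:\E\To\X^1$, I would first recall that $\X^1=\Der(\R,\R)$ is a Lie-Rinehart algebra whose own anchor is the identity, so the ``commutes with anchors'' half of condition (1) is tautological. The ``commutes with brackets'' half is precisely the assertion of Proposition \ref{prop:anc-is-hom}, that $\rho$ is a homomorphism from the Leibniz bracket $\brac$ onto the commutator bracket of $\X^1$. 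Condition (2), namely $\rho\circ\delta=0$, is exactly equation \eqref{eqn:cx}.

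For the projection $\pi:\E\To\bar\E$, the Lie-Rinehart structure on $\bar\E=\E/\delta\Omega^1$ is, by Corollary \ref{cor:Ebar}, \emph{defined} to be the one induced from $\E$; hence $\pi$ intertwines the brackets and the anchors by the very construction of those induced operations, which gives condition (1). Since $\ker\pi=\delta\Omega^1$, condition (2), $\pi\circ\delta=0$, holds automatically. For the inclusion $i:\D\To\E$ of a Dirac submodule, the Lie-Rinehart structure on $\D$ is by definition the restriction of the anchor and bracket of $\E$ (by the Proposition immediately preceding), so $i$ commutes with anchors and brackets, establishing condition (1) of Definition \ref{def:morLR->CD}; condition (2), $\metr\circ(i\otimes i)=0$, is nothing but the isotropy of $\D$ built into the definition of a Dirac submodule.

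The upshot is that this proposition is essentially a bookkeeping statement matching prior results to the two morphism definitions. The only piece carrying genuine mathematical content is the bracket-compatibility of the anchor, and that was already proved as Proposition \ref{prop:anc-is-hom}. Accordingly I anticipate no real obstacle; the one point requiring care is to keep straight which map is a morphism in which direction, so that the correct definition—and hence the correct second condition, isotropy ($\metr\circ(i\otimes i)=0$) versus vanishing on the coanchor ($r\circ\delta=0$)—is invoked in each of the three cases.
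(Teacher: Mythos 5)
Your proof is correct and is essentially the paper's own argument: the paper's proof reads simply ``Obvious, in view of the already established facts,'' and your write-up is precisely the unpacking of that remark, matching each map against the correct morphism definition (Definition \ref{def:morCD->LR} for $\rho$ and $\pi$, Definition \ref{def:morLR->CD} for $i$) and citing Proposition \ref{prop:anc-is-hom}, equation \eqref{eqn:cx}, Corollary \ref{cor:Ebar}, and the isotropy of a Dirac submodule. Nothing is missing, and the directional bookkeeping you flag (isotropy versus $r\circ\delta=0$) is handled correctly.
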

\begin{proof}
Obvious, in view of the already established facts.
\end{proof}

\subsection{Twists.} Given a Courant-Dorfman algebra $\E$ and a
3-form $\psi\in\Omega^3$, we can define a new bracket
\begin{equation}\label{eqn:twist}
[e_1,e_2]_\psi=[e_1,e_2]+\delta\iota_{\rho(e_2)}\iota_{\rho(e_1)}\psi
\end{equation}
This twisted bracket $\brac_\psi$ will be again a Dorfman bracket
(with the same $\metr$ and $\dl$) if and only if $d_0\psi=0$. It is
clear that this defines an invertible endofunctor
$\mathrm{Tw}(\psi)$ on the category $\mathbf{CD}_\R$, restricting to
each $\mathbf{CD}_{(\E,\metr,\dl)}$, and that
$$\mathrm{Tw}(\psi_1+\psi_2)=\mathrm{Tw}(\psi_1)\circ\mathrm{Tw}(\psi_2)$$
Furthermore, each $\beta\in\Omega^2$ defines a natural
transformation $\exp(-\beta)$ from $\mathrm{Tw}(\psi)$ to
$\mathrm{Tw}(\psi+d_0\beta)$ via
$$\exp(-\beta)(e)=e-\delta\iota_{\rho(e)}\beta$$
which is also additive. In fact, this yields an action of the group
crossed module $\Omega^2\stackrel{d_0}{\To}\Omega^{3,\mathrm{cl}}$
on the category $\mathbf{CD}_\R$, restricting to each
$\mathbf{CD}_{(\E,\metr,\dl)}$. In particular, the group
$\Omega^{2,\mathrm{cl}}$ of closed 2-forms acts on every Courant-Dorfman algebra by
automorphisms.

We refer to \cite{Bres} for the relevant calculations.

\subsection{Some examples.}\label{sec:examples}
\begin{eg}
Let $(\R,\E)$ be a Courant-Dorfman algebra with $\metr=0$. A quick
glance at the axioms then shows that $\E$ is a Lie algebra over
$\R$, while $\dl$ is a derivation with values in the center of $\E$.
There are no further restrictions.

As a special case of this, let $\E=\R$. Then the bracket must
vanish, while the derivation $\dl$ can be arbitrary.

More fundamentally, consider $\E=\Omega^1$ with $\dl=d_0$. This is
the initial object in $\mathbf{CD}_\R$.
\end{eg}

\begin{eg}
At the opposite extreme, let $\dl=0$. Then the definition reduces to
that of a quadratic Lie algebra over $\R$ (i.e. a Lie algebra
equipped with an ad-invariant quadratic form).
\end{eg}

\begin{eg}\label{eg:exact}
Given an $\R$, let $\Q_0=\X^1\oplus\Omega^1$. It becomes a
Courant-Dorfman algebra with respect to
\begin{eqnarray*}
\met{(v_1,\alpha_1)}{(v_2,\alpha_2)}&=&\iota_{v_1}\alpha_2+\iota_{v_2}\alpha_1\\
\dl f&=&(0,d_0f)\\
{[(v_1,\alpha_1),(v_2,\alpha_2)]}&=&(\pb{v_1}{v_2},L_{v_1}\alpha_2-\iota_{v_2}d_0\alpha_1)
\end{eqnarray*}

The bracket here is \emph{the} original Dorfman bracket
\cite{Dorf87}, while the corresponding Courant bracket is
$$
\cb{(v_1,\alpha_1)}{(v_2,\alpha_2)}=(\pb{v_1}{v_2},L_{v_1}\alpha_2-L_{v_2}\alpha_1-\hf
d_0(\iota_{v_1}\alpha_2-\iota_{v_2}\alpha_1))
$$
which is \emph{the} original Courant bracket \cite{Cou}.

For any $\psi\in\Omega^{3,\cl}$, the Courant-Dorfman algebra
$\Q_\psi=\Tw(\psi)(\Q_0)$ is exact. Conversely, it can be shown
\cite{Sev-Letters} that, if $\Q$ is exact and its tangent complex
$\T_\Q$ \eqref{eqn:tancx} admits an isotropic splitting, $\Q$ is
isomorphic to $\Q_\psi$ for some $\psi$; since isotropic splittings
form an $\Omega^2$-torsor, such exact Courant-Dorfman algebras are
classified by $H^3_{\mathrm{dR}}(\R)$.
\end{eg}

\begin{eg}\label{eg:bialg}
As a variant of the previous example, we can replace $\X^1$ by
an arbitrary Lie-Rinehart algebra $(\R,\L)$, and let
$\E=\L\oplus\Omega^1$. Given any $\psi\in\Omega^{3,\cl}$, define the
structure maps as follows:
\begin{eqnarray*}
\met{(a_1,\alpha_1)}{(a_2,\alpha_2)}&=&\iota_{\rho(a_1)}\alpha_2+\iota_{\rho(a_2)}\alpha_1\\
\dl f&=&(0,d_0f)\\
{[(a_1,\alpha_1),(a_2,\alpha_2)]}&=&([a_1,a_2],L_{\rho(a_1)}\alpha_2-\iota_{\rho(a_2)}d_0\alpha_1+
\iota_{\rho(a_1)}\iota_{\rho(a_2)}\psi),
\end{eqnarray*}
where $\rho$ is the anchor of $\L$.

More generally, we can consider a pair of compatible
Lie-Rinehart algebras in duality (a Lie bialgebroid) \cite{LWX1}.
\end{eg}

\begin{eg}\label{eg:Bloch}
Consider a Lie algebra $\g$ over $\KK$ equipped with an ad-invariant
pseudometric $\metr$. Given a $\KK$-algebra $\R$, let
$\ug=\R\otimes\g$; extend $\brac$ and $\metr$ to $\ug$ by
$\R$-linearity. Finally, let $\E=\ug\oplus\Omega^1$ and define the
structure maps as follows:
\begin{eqnarray*}
\met{(F_1,\alpha_1)}{(F_2,\alpha_2)}&=&\met{F_1}{F_2}\\
\dl f&=&(0,d_0f)\\
{[(F_1,\alpha_1),(F_2,\alpha_2)]}&=&([F_1,F_2],\met{d_0F_1}{F_2}),
\end{eqnarray*}
where, for $F_i=f_i\otimes x_i$ ($i=1,2$), $\met{d_0F_1}{F_2}$ means
$\met{x_1}{x_2}(d_0f_1)f_2$. Again, it can be easily checked that this
defines a Courant-Dorfman structure (with zero anchor map). This
algebra goes back to the work of Spencer Bloch on algebraic
$K$-theory (see \cite{Bres} and references therein).
\end{eg}

\begin{eg}
As a special case of the previous example, assume that
$\QQ\subset\KK$ and consider $\R=\KK[z,z^{-1}]$, the ring of Laurent
polynomials. In this case, $\ug$ is better known as $L\g$, the loop
Lie algebra of $\g$, and the Lie algebra structure on
$\E/\dl\R=L\g\oplus(\Omega^1/d_0\R)\simeq L\g\oplus\KK$ is very
well-known. The latter isomorphism is induced by the residue map
$$\oint:\Omega^1\To\KK,$$
and so the Lie bracket is given by the
famous Kac-Moody formula
$$[F_1,F_2]+\oint\met{d_0F_1}{F_2}$$

\end{eg}

\begin{eg}
It is possible to combine Examples \ref{eg:bialg} and
\ref{eg:Bloch}. Let $\ug$ be a Lie algebra over $\R$. Assume there
is a connection $\nabla$ on $\ug$ which acts by derivations of
the Lie bracket. Let $\omega\in\Omega^2\otimes_\R\ug$ be the curvature.
Then $\L=\X^1\oplus\ug$ becomes a Lie-Rinehart algebra with the
bracket given by
$$[(v_1,\xi_1),(v_2,\xi_2)]=([v_1,v_2],[\xi_1,\xi_2]+\nabla_{v_1}\xi_2-\nabla_{v_2}\xi_1+\iota_{v_2}\iota_{v_1}\omega)$$
and the anchor given by projection onto the first factor.

Suppose now that $\ug$ is equipped with an ad-invariant
$\nabla$-invariant (i.e. $\L$-invariant) pseudometric $\metr$ and
that, moreover, there exists a 3-form $\psi\in\Omega^3$ such that
$$d_0\psi=\hf\met{\omega}{\omega}$$
(the condition for (the one half of) the first Pontryagin class to vanish). Then the
Lie-Rinehart structure on $\L$ extends to a Courant-Dorfman
structure on $\E=\L\oplus\Omega^1$ as follows:
\begin{eqnarray*}
\dl f&=&(0,0,d_0f)\\
\met{(v_1,\xi_1,\alpha_1)}{(v_2,\xi_2,\alpha_2)}&=&\iota_{v_1}\alpha_2+\iota_{v_2}\alpha_1+\met{\xi_1}{\xi_2}\\
{[(v_1,\xi_1,\alpha_1),(v_2,\xi_2,\alpha_2)]}&=&([v_1,v_2],[\xi_1,\xi_2]+\nabla_{v_1}\xi_2-\nabla_{v_2}\xi_1+\iota_{v_2}\iota_{v_1}\omega,\\
&&\met{\nabla\xi_1}{\xi_2}+\met{\xi_1}{\iota_{v_2}\omega}-\met{\xi_2}{\iota_{v_1}\omega}+\\
&&+L_{v_1}\alpha_2-\iota_{v_2}d_0\alpha_1+\iota_{v_2}\iota_{v_1}\psi)
\end{eqnarray*}
We refer to \cite{Bres} for the relevant calculations.
\end{eg}

\section{A preliminary construction: universal enveloping and convolution
algebras.}\label{sec:conv}

Let $V$ and $W$ be $\KK$-modules, and let $\rbr:V\otimes V\to W$ be a symmetric
bilinear form. Consider the graded $\KK$-module $L=W[2]\oplus V[1]$;
it becomes a graded Lie algebra over $\KK$ with the only nontrivial
brackets given by $-\rbr$. Consider its universal enveloping algebra
$U(L)$. As an algebra, it is a quotient of the tensor algebra $T(L)$
(with grading induced by that of $L$) by the homogeneous ideal
generated by elements of the form $v_1\otimes v_2+v_2\otimes
v_1+(v_1,v_2)$, $v\otimes w+w\otimes v$ and $w_1\otimes
w_2+w_2\otimes w_1$. Consequently, for $p\geq 0$, we have
$$U(L)_{-p}=\bigoplus_{k=0}^{\left[\frac{p}{2}\right]}(V^{\otimes(p-2k)}\otimes S^kW)/R$$
where $R$ is the submodule generated by elements of the form
$$v_1\otimes\cdots\otimes v_i\otimes v_{i+1}\otimes\cdots\otimes v_{p-2k}\otimes w_1\cdots
w_k+
$$
$$
+v_1\otimes\cdots\otimes v_{i+1}\otimes v_i\otimes\cdots\otimes
v_{p-2k}\otimes w_1\cdots w_k+
$$
$$
+v_1\otimes\cdots\otimes\hat{v}_i\otimes\hat{v}_{i+1}\otimes\cdots\otimes
v_{p-2k}\otimes(v_i,v_{i+1})w_1\cdots w_k
$$
for $i=1,\ldots,p-2k-1$, $k=0,\ldots,\left[\frac{p}{2}\right]$.

Recall that $U(L)$ is also a graded cocommutative coalgebra with
comultiplication
$$\Delta:U(L)\To U(L)\otimes U(L)$$
uniquely determined by the requirement that the elements of $L$ be
primitive and that $\Delta$ be an algebra homomorphism. Explicitly,
$$\Delta(v_1\cdots v_{p-2k}w_1\cdots w_k)=$$
$$=\sum_{i=0}^k\sum_{j=0}^{p-2k}\sum_{\sigma,\tau}(-1)^\sigma v_{\sigma(1)}\cdots v_{\sigma(j)}w_{\tau(1)}\cdots w_{\tau(i)}
\otimes v_{\sigma(j+1)}\cdots v_{\sigma(p-2k-j)}w_{\tau(i+1)}\cdots
w_{\tau(k)}
$$
where $\sigma$ runs over $(j,p-2k-j)$-shuffles, and $\tau$ runs over
$(i,k-i)$-shuffles.

Now, recall that, whenever $U$ is a graded $\KK$-coalgebra and $\R$
is a graded $\KK$-algebra, the graded $\R$-module $\Hom(U,\R)$ is
naturally an $\R$-algebra, called the \emph{convolution algebra}
(this is a general fact about a pair (comonoid, monoid) in any
monoidal category).

Let us apply this construction to $U=U(L)$ and an arbitrary
$\KK$-algebra $\R$ (concentrated in degree 0). Denote the
corresponding convolution algebra by $\A=\A(V,W;\R)=\Hom(U(L),\R)$.
Since $U(L)$ is non-positively graded and $\R$ sits in degree 0,
$\A$ is non-negatively graded. Explicitly, for $p\geq 0$, $\A^p$
consists of ($\left[\frac{p}{2}\right]+1$)-tuples
$$\omega=(\omega_0,\omega_1,\ldots,\omega_{\left[\frac{p}{2}\right]})$$
where
$$\omega_k:V^{\otimes^{p-2k}}\otimes W^{\otimes^k}\To\R$$
is symmetric in the $W$-arguments and satisfying
\begin{equation}\label{eqn:weakantisym}
\omega_k(\ldots,v_i,v_{i+1},\ldots;\ldots)+\omega_k(\ldots,v_{i+1},v_i,\ldots;\ldots)=
\end{equation}
$$=-\omega_{k+1}(\ldots,\hat{v}_i,\hat{v}_{i+1},\ldots;(v_i,v_{i+1}),\ldots)$$
for all $i=1,\ldots,p-2k-1$. By adjunction, $\omega_k$ can be viewed
as a map

$$V^{\otimes^{p-2k}}\To\Hom(S^kW,\R)$$

Again, since $S(W[2])$ is a coalgebra (concentrated in even
non-positive degrees), $\Hom(S(W[2]),\R)$ is an algebra with
multiplication given by

\begin{equation}\label{eqn:product'}
HK(w_1,\ldots,w_{i+j})=
\end{equation}
$$\sum_{\tau\in\sh(i,j)}H(w_{\tau(1)},\ldots,w_{\tau(i)})K(w_{\tau(i+1)},\ldots,w_{\tau(i+j)})$$
This leads to the following formula for the multiplication in $\A$:

\begin{equation}\label{eqn:product}
(\omega\eta)_k(v_1,\ldots,v_{p+q-2k})=
\end{equation}
$$=\sum_{i+j=k}\sum_{\sigma\in\mathrm{sh}(p-2i,q-2j)}(-1)^\sigma\omega_i(v_{\sigma(1)},\ldots,v_{\sigma(p-2i)})
\eta_j(v_{\sigma(p-2i+1)},\ldots,v_{\sigma(p+q-2k)})
$$
where the multiplication in each summand takes place in
$\Hom(S(W[2]),\R)$ according to formula \eqref{eqn:product'}. In
particular,
\begin{equation}\label{eqn:product0}
(\omega\eta)_0(v_1,\ldots,v_{p+q})=
\end{equation}
$$=\sum_{\sigma\in\mathrm{sh}(p,q)}(-1)^\sigma\omega_0(v_{\sigma(1)},\ldots,v_{\sigma(p)})
\eta_0(v_{\sigma(p+1)},\ldots,v_{\sigma(p+q)})
$$
where the multiplication in each summand takes place in $\R$.

Recall further that, as any universal enveloping algebra, $U(L)$ has
a canonical increasing filtration
$$\KK=U^0\subset U^1\subset\cdots\subset U^{n}\subset U^{n+1}\subset\cdots\subset U(L)$$
where $U^n$ is the submodule spanned by products of no more than $n$
elements of $L$. This induces a decreasing filtration of
$\A=\A(V,W;\R)$ by $\R$-submodules
$$\A\supset\mathrm{Ann}(U^1)\supset\cdots\supset\mathrm{Ann}(U^n)\supset\mathrm{Ann}(U^{n+1})\supset\cdots\supset 0$$
where $\mathrm{Ann}(U^n)$ denotes the annihilator of $U^n$ in $\A$.
Now, given $q,i\geq 0$, define
$$\A^q_i=\mathrm{Ann}(U^{q-i-1})\cap\A^q$$
and
$$\A_i=\bigoplus_{q\geq 0}\A^q_i$$
(set $\A_i=0$ for $i<0$). It is easy to see that this defines an
\emph{increasing} filtration on $\A$ which is finite in each
(superscript) degree, and that, furthermore,
$\A_i\A_j\subset\A_{i+j}$ (in particular, $\A_0$ is a subalgebra of
$\A$ with respect to the multiplication \eqref{eqn:product0}).
Explicitly,
$$\A_i=\{\omega\in\A|\omega_k=0,\forall k>i\}.$$
Define, as usual, $\gr_i\A^q:=\A^q_i/\A^q_{i-1}$, and
let
$$\gr\A^q=\bigoplus_i\gr_i\A^q$$
The following is then immediate:
\begin{prop}
There is a canonical isomorphism of graded $\R$-modules
$$\gr\A\simeq\Hom(S(L),\R)$$
where the grading on the left hand side is with respect to the
superscript degree. In particular,
$$\A_0=\gr_0\A=\Hom(S(V[1]),\R)$$
\end{prop}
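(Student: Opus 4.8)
The plan is to exhibit the claimed isomorphism $\gr\A\simeq\Hom(S(L),\R)$ as a direct consequence of the two standard facts underlying the whole construction: first, that $U(L)$ carries an increasing filtration $U^0\subset U^1\subset\cdots$ whose associated graded coalgebra is, by the graded Poincar\'e–Birkhoff–Witt theorem, the symmetric coalgebra $S(L)$; and second, that applying $\Hom(-,\R)$ to a short exact sequence of $\KK$-coalgebras (more precisely, to the filtration quotients) turns an increasing filtration of $U(L)$ into a decreasing filtration of the convolution algebra $\A=\Hom(U(L),\R)$, with the annihilator filtration $\mathrm{Ann}(U^n)$ playing the role dual to $U^n$. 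The reindexed filtration $\A^q_i=\mathrm{Ann}(U^{q-i-1})\cap\A^q$ is designed precisely so that its $i$-th graded piece pairs off against $U^{q-i}/U^{q-i-1}$, which PBW identifies with the degree-$(q-i)$ part of $S(L)$.

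First I would recall graded PBW for $U(L)$: since $L=W[2]\oplus V[1]$ is a graded Lie algebra over $\KK\ni\hf$, one has a canonical isomorphism of graded coalgebras $\gr^U U(L)\simeq S(L)$, where $\gr^U$ denotes the associated graded for the canonical filtration $\{U^n\}$. Concretely, $U^n/U^{n-1}\simeq S^n(L)$, the length-$n$ symmetric tensors, and one must keep track of the internal $\ZZ$-grading inherited from $L$ alongside the filtration degree $n$. Next I would make the duality precise. For each $n$ there is a short exact sequence $0\to U^{n-1}\to U^n\to U^n/U^{n-1}\to 0$; applying $\Hom(-,\R)$ and restricting to a fixed superscript degree $q$ (where everything is finite, since $\A$ is finitely filtered in each degree) yields
\begin{equation*}
\gr_i\A^q=\A^q_i/\A^q_{i-1}=\frac{\mathrm{Ann}(U^{q-i-1})\cap\A^q}{\mathrm{Ann}(U^{q-i})\cap\A^q}\simeq\Hom\bigl((U^{q-i}/U^{q-i-1})_q,\R\bigr),
\end{equation*}
where $(\cdot)_q$ selects the internal-degree-$q$ part. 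The middle identification is the elementary fact that the quotient of two consecutive annihilators is naturally the $\R$-dual of the corresponding subquotient of $U(L)$. Substituting the PBW identification $U^{q-i}/U^{q-i-1}\simeq S^{q-i}(L)$ and summing over $i$ then gives $\gr\A^q\simeq\Hom(S(L),\R)^q$, and summing over $q$ yields the stated isomorphism.

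The main obstacle is bookkeeping rather than conceptual: I must verify that the two gradings — the filtration index $i$ and the internal superscript degree $q$ — interact exactly as claimed, so that the reindexing $q-i-1$ in the definition of $\A^q_i$ produces the symmetric-coalgebra grading on the nose and not up to a shift. Concretely, an element of $L$ in the $V[1]$ summand contributes $1$ to both the length filtration and the internal degree, whereas an element in the $W[2]$ summand contributes $1$ to the length but $2$ to the internal degree; one checks that a length-$(q-i)$ monomial built from $p-2k$ factors in $V[1]$ and $k$ factors in $W[2]$ sits in internal degree $q$ precisely when $k=i$, which is why $\A_i=\{\omega\in\A\mid\omega_k=0\ \forall k>i\}$ as stated just before the Proposition. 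The final clause, $\A_0=\gr_0\A=\Hom(S(V[1]),\R)$, falls out immediately by setting $i=0$: only the $V[1]$-summand survives at filtration level $0$, and the induced product is \eqref{eqn:product0}, which is manifestly the symmetric-coalgebra (shuffle) multiplication on $\Hom(S(V[1]),\R)$.
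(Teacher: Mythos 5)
There is a genuine gap, and it sits exactly at the two points you dismiss as standard. First, the ``elementary fact'' that the quotient of consecutive annihilators is the dual of the corresponding subquotient is not elementary in the generality of this paper: $\KK$ is an arbitrary commutative ring containing $\hf$ and $\R,V,W$ are arbitrary, so $\Hom_\KK(-,\R)$ is left exact but not exact. The natural restriction map
$$
\gr_i\A^q=\frac{\mathrm{Ann}(U^{q-i-1})\cap\A^q}{\mathrm{Ann}(U^{q-i})\cap\A^q}\To\Hom\bigl((U^{q-i}/U^{q-i-1})_{-q},\R\bigr)
$$
is always injective, but its surjectivity asserts that every $\R$-valued functional on $U^{q-i}_{-q}$ vanishing on $U^{q-i-1}_{-q}$ extends to all of $U(L)_{-q}$; for a general inclusion of $\KK$-modules this fails (already restriction along $3\ZZ[\hf]\subset\ZZ[\hf]$ is not surjective on duals), so it must be proved for this particular filtration --- and that is where the whole content of the proposition lives. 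Second, the ``graded PBW theorem'' giving $U^n/U^{n-1}\simeq S^n(L)$ is likewise not available off the shelf here: PBW over a general commutative ring requires freeness/flatness of $L$ or $\QQ\subset\KK$ (the paper itself invokes PBW only in a remark under the hypothesis $\KK\supset\QQ$), whereas the proposition is asserted for arbitrary modules over any $\KK\ni\hf$.

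Both defects are cured by one explicit device, which is also what makes the paper's own proof ``immediate'' from the description $\A_i=\{\omega:\omega_k=0,\ \forall k>i\}$ stated just before the proposition. The class of $\omega\in\A^q_i$ in $\gr_i\A^q$ is recorded by its top component $\omega_i$, which is antisymmetric in the $V$-arguments (apply \eqref{eqn:weakantisym} with $\omega_{i+1}=0$) and symmetric in the $W$-arguments; this gives the injection into $\Hom(S^{q-2i}(V[1])\otimes S^i(W[2]),\R)$. The nontrivial point is surjectivity: given such a $\psi$, one must produce the \emph{lower} components, and extension by zero does not work for $i\geq 1$, since \eqref{eqn:weakantisym} at level $i-1$ would force $\psi(\ldots,\hat{v}_m,\hat{v}_{m+1},\ldots;(v_m,v_{m+1}),\ldots)=0$, which is false in general. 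The fix is a Wick-type contraction: define $\omega_{i-j}$ to be $\left(-\hf\right)^{j}$ times the sum, over all ways of choosing $j$ disjoint pairs among the $V$-arguments, of $\pm\,\psi$ evaluated on the unpaired arguments with the elements $(v_a,v_b)\in W$ inserted as extra $W$-arguments; the relations \eqref{eqn:weakantisym} are then verified level by level, using only $\hf\in\KK$ and the symmetry of $\rbr$. (Equivalently, this formula splits the filtration of $U(L)$ canonically, which proves at one stroke the PBW-type identification you wanted to cite and the duality step.) Note that for $i=0$ extension by zero does work, so the final clause of your argument, $\A_0=\Hom(S(V[1]),\R)$, is correct as stated.
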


\begin{rem}
If $\KK\supset\QQ$, the symmetrization map
$$\Psi:S(L)\To U(L)$$
is a coalgebra isomorphism by the Poincar{\'e}-Birkhoff-Witt theorem.
Hence, for any $\R$, the dual map
$$\Psi^*:\Hom(U(L),\R)\To\Hom(S(L),\R)=\Hom(S(V[1]),\Hom(S(W[2]),\R))$$
is an isomorphism of algebras. Explicitly,
\begin{equation}\label{eqn:symmetrization}
(\Psi^*\omega)_k(v_1,\ldots,v_{p-2k})=\frac{1}{(p-2k)!}\sum_{\sigma\in
S_{p-2k}}(-1)^\sigma \omega_k(v_{\sigma(1)},\ldots,v_{\sigma(p-2k)})
\end{equation}
\end{rem}

\section{The standard complex.}\label{sec:complex}

\subsection{The algebra $\C(\E,\R)$}
Let $(\E,\metr)$ be a metric $\R$-module; consider the convolution
algebra $\A=\A(\E,\Omega^1;\R)$ as in the previous section, with
$\rbr=d_0\metr$. Let $\C^0=\R$ and for each $p> 0$, define the
submodule $\C^p\subset\A^p$ as consisting of those
$\omega=(\omega_0,\omega_1,\ldots)$ which satisfy the following two
additional conditions:

\begin{enumerate}
\item Each $\omega_k$ takes values in
$\X^k=\Hom_\R(S_\R^k\Omega^1,\R)\subset\Hom(S^k\Omega^1,\R)$;
\item Each $\omega_k$ is $\R$-linear in the last (($p-2k$)-th) argument.
\end{enumerate}

For $e_1,\ldots,e_{p-2k}\in\E$, $\omega_k(e_1,\ldots,e_{p-2k})$ can
be viewed as either a symmetric $k$-derivation of $\R$ whose value
on $f_1,\ldots,f_k\in\R$ will be denoted by
$$\omega_k(e_1,\ldots,e_{p-2k};f_1,\ldots,f_k)$$
or as a symmetric $\R$-multilinear function on $\Omega^1$ whose
value on a $k$-tuple $\alpha_1,\ldots,\alpha_k$ will be similarly
denoted by
$$\bar{\omega}_k(e_1,\ldots,e_{p-2k};\alpha_1,\ldots,\alpha_k)$$
so that
$$\bar{\omega}_k(\ldots;d_0f_1,\ldots,d_0f_k)=\omega_k(\ldots;f_1,\ldots,f_k)$$
Evidently $\bar{\omega}_0=\omega_0$. Often we shall drop the bar
from the notation altogether when it is not likely to cause
confusion.

\begin{rem}\label{rem:full}
If $\metr$ is \emph{full}, in the sense that there exist $e_i,e_i'\in\E$, $i=1,\ldots,N$, such that $\sum_i\met{e_i}{e_i'}=1$, then an
$\omega=(\omega_0,\omega_1,\ldots)$ is uniquely determined by
$\omega_0$, for then
$$\omega_1(e_1,\ldots;f)=-\sum_i(\omega_0(fe_i,e_i',e_1,\ldots)+\omega_0(e_i',fe_i,e_1,\ldots))$$
and so on by induction. This condition is very often satisfied and
is a great help when one needs to prove, for instance, that some
cochain vanishes.
\end{rem}

\begin{prop}\label{prop:1stOrd}
For all $1\leq i<p-2k$ the following holds:
$$\bar{\omega}_k(\ldots,fe_i,\ldots)=f\bar{\omega}_k(\ldots,e_i,\ldots)+$$
$$+\sum_{j=1}^{p-2k-i}(-1)^j\met{e_i}{e_{i+j}}\iota_{d_0f}\bar{\omega}_{k+1}(\ldots,\hat{e}_i,\ldots,\hat{e}_{i+j},\ldots)$$
where $\iota_\alpha$ denotes contraction with $\alpha\in\Omega^1$.
\end{prop}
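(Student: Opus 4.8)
The plan is to reduce $\R$-linearity in an arbitrary slot $i$ to the $\R$-linearity in the \emph{last} slot (condition (2) in the definition of $\C^p$), by transporting the $f$-carrying argument to the right one step at a time via the weak antisymmetry relation \eqref{eqn:weakantisym}. Writing $m=p-2k$ for the number of $\E$-arguments and working throughout in the bar notation, the two structural facts I will use repeatedly are: (i) the relation \eqref{eqn:weakantisym}, which in bar form reads
$$\bar\omega_k(\ldots,a,b,\ldots)+\bar\omega_k(\ldots,b,a,\ldots)=-\bar\omega_{k+1}(\ldots,\hat a,\hat b,\ldots;d_0\met{a}{b})$$
(recall $\rbr=d_0\metr$, so the new form argument produced by a transposition is $d_0\met{a}{b}$); and (ii) that $\bar\omega_{k+1}$, taking values in $\X^{k+1}=\Hom_\R(S^{k+1}_\R\Omega^1,\R)$ by condition (1), is $\R$-\emph{linear} in each of its form slots. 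Combined with the Leibniz rule $d_0\met{fe}{e'}=f\,d_0\met{e}{e'}+\met{e}{e'}\,d_0f$, fact (ii) is what converts a scalar $f$ pulled through $\metr$ into the contraction $\iota_{d_0f}$ appearing on the right-hand side.

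I would argue by descending induction on $i$, i.e.\ induction on the distance $m-i$ from slot $i$ to the last slot. For the base case $m-i=1$ (so $i=m-1$), I apply the transposition relation (i) to the last two arguments $fe_{m-1},e_m$, use condition (2) to pull $f$ out of the resulting last slot, apply (i) once more to the untwisted pair $e_{m-1},e_m$, and finally expand $d_0\met{fe_{m-1}}{e_m}$ by Leibniz and (ii). The two terms proportional to $f\,\bar\omega_{k+1}(\ldots;d_0\met{e_{m-1}}{e_m})$ cancel, leaving exactly $f\bar\omega_k(\ldots,e_{m-1},e_m)-\met{e_{m-1}}{e_m}\iota_{d_0f}\bar\omega_{k+1}(\ldots,\hat e_{m-1},\hat e_m)$, which is the claimed formula (the single $j=1$ term).

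For the inductive step I transpose $fe_i$ with its right neighbour $e_{i+1}$ using (i); this produces the correction $-\bar\omega_{k+1}(\ldots;d_0\met{fe_i}{e_{i+1}})$, whose Leibniz expansion contributes precisely the $j=1$ term $-\met{e_i}{e_{i+1}}\iota_{d_0f}\bar\omega_{k+1}(\ldots,\hat e_i,\hat e_{i+1},\ldots)$ together with an $f$-multiple of a pure $\bar\omega_{k+1}$ term. The argument $fe_i$ now sits in slot $i+1$, so the induction hypothesis applies and yields $f\bar\omega_k$ plus a sum over $j'=1,\ldots,m-i-1$; reconverting the term $-f\,\bar\omega_k(\ldots,e_{i+1},e_i,\ldots)$ back to natural order by (i) once more cancels the leftover $f$-multiple from the previous step. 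It then remains to re-index the induction-hypothesis sum by $j=j'+1$ (partner $e_{i+1+j'}=e_{i+j}$), using $(-1)^{j'}=-(-1)^{j}$ to match signs, thereby producing the target terms $j=2,\ldots,m-i$.

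The main difficulty is purely combinatorial bookkeeping rather than any analytic content: one must check that, after deleting the two contracted arguments $e_i$ and $e_{i+j}$, the remaining $\E$-arguments of $\bar\omega_{k+1}$ appear in the same order in the induction-hypothesis sum as in the target formula (they do, since once $e_i$ is removed the transposed $e_{i+1}$ no longer affects the relative order of the survivors), and that the single swap supplies exactly the missing $j=1$ term while the sign reindexing aligns the rest. I expect this hat-and-order accounting, together with keeping track of which form slot of $\bar\omega_{k+1}$ is being contracted, to be the only delicate point.
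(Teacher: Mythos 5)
Your proposal is correct and is essentially the paper's own proof, which reads in full: ``By induction from $i=p-2k$ downward, using \eqref{eqn:weakantisym} at each step.'' Your write-up simply makes explicit the bookkeeping (the Leibniz expansion of $d_0\met{fe_i}{e_{i+1}}$, the cancellation of the $f$-multiple of $\bar{\omega}_{k+1}$, and the re-indexing $j=j'+1$) that the paper leaves to the reader.
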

\begin{proof}
By induction from $i=p-2k$ downward, using \eqref{eqn:weakantisym}
at each step.
\end{proof}

Define $\C=\C(\E,\R)=\{\C^p\}_{p\geq 0}$.

\begin{prop}
$\C(\E,\R)\subset\A(\E,\Omega^1;\R)$ is a graded subalgebra.
\end{prop}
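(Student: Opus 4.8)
I need to show that $\C(\E,\R)$, defined as the set of those $\omega \in \A(\E,\Omega^1;\R)$ satisfying conditions (1) and (2) (i.e. each $\omega_k$ takes values in $\X^k = \Hom_\R(S^k_\R\Omega^1,\R)$ and each $\omega_k$ is $\R$-linear in its last $\E$-argument), is closed under the convolution multiplication \eqref{eqn:product}. Since I already know from the previous section that $\A$ is a graded algebra, I only need to verify that the product $\omega\eta$ of two elements satisfying (1) and (2) again satisfies (1) and (2). Let me think about what each condition demands.

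Condition (1) says: when I fix the $\E$-arguments, $\omega_k(e_1,\ldots;-)$ must be a *symmetric $k$-derivation* of $\R$ — equivalently, $\R$-multilinear as a function on $\Omega^1$ after applying the bar. Condition (2) says $\R$-linearity in the final $\E$-slot. Let me look at how the product formula \eqref{eqn:product} works.

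**The product formula.** From \eqref{eqn:product}, $(\omega\eta)_k$ is a sum over $i+j=k$ of shuffled products where the $\Omega^1$-arguments (equivalently the $\R$-arguments $f_1,\ldots,f_k$) get distributed via \eqref{eqn:product'} between $\omega_i$ and $\eta_j$. So to verify condition (1), I need the derivation/$\R$-multilinearity property to be preserved under this shuffle-product in the $\X^\bullet = \Hom_\R(S^\bullet_\R\Omega^1,\R)$ factors.

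So the plan has two clean pieces, one for each condition. Let me write them out.

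\textbf{Verifying condition (1).} The key observation is that $\X^\bullet = \bigoplus_k \Hom_\R(S^k_\R\Omega^1,\R)$ is precisely the subalgebra $\Hom(S_\R(\Omega^1),\R) \subset \Hom(S(\Omega^1),\R)$ of those functionals that are $\R$-multilinear (symmetric $\R$-derivations), and this is closed under the shuffle multiplication \eqref{eqn:product'}. Indeed, the shuffle product of two $\R$-multilinear symmetric functions is again $\R$-multilinear: each summand $H(w_{\tau(1)},\ldots)K(w_{\tau(i+1)},\ldots)$ is a product (in $\R$) of two functions each $\R$-linear in every argument it receives, hence $\R$-linear in each $w$-argument; and summing over shuffles preserves symmetry. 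So if every $\omega_i$ and $\eta_j$ takes values in $\X^\bullet$, then so does each shuffle-product term, and hence $(\omega\eta)_k$ takes values in $\X^k$. This verifies condition (1). Essentially this says the $\R$-linear locus is a subalgebra of $\Hom(S(\Omega^1),\R)$ for the shuffle product, which is a routine but genuine check.

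\textbf{Verifying condition (2).} Here I must check $\R$-linearity of $(\omega\eta)_k$ in its final $\E$-argument $v_{p+q-2k}$. Fix all other arguments and consider the dependence on the last slot. In the shuffle sum \eqref{eqn:product}, the last argument $v_{p+q-2k}$ lands, for each shuffle $\sigma$, either as the final argument fed to $\omega_i$ or as the final argument fed to $\eta_j$. Here is the subtle point: scaling $v_{p+q-2k}$ by $f\in\R$ — I want to pull out the factor $f$ — but a shuffle $\sigma$ may not place $v_{p+q-2k}$ in the \emph{last} slot of whichever factor it enters, so I cannot directly invoke condition (2) for $\omega_i$ or $\eta_j$. \textbf{This is the main obstacle.} The resolution is to use Proposition \ref{prop:1stOrd}, which computes exactly the failure of $\R$-linearity in an \emph{arbitrary} (non-final) $\E$-slot in terms of $\metr$-contractions and the next component $\omega_{k+1}$. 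The strategy is to apply Proposition \ref{prop:1stOrd} to each factor to move the argument $v_{p+q-2k}$ into its final slot (where $\R$-linearity is guaranteed), collect the resulting correction terms, and show that when summed over all shuffles $\sigma$ and over the splitting $i+j=k$, these corrections cancel in pairs. The cancellation should be organized by matching, for each correction term produced by moving $v_{p+q-2k}$ past some $e$ in one factor, the shuffle that instead places that same pair of arguments together in the other factor; the $\met{e}{v_{p+q-2k}}$ contraction factors and the shift $k\mapsto k+1$ in the component index are exactly compatible with how \eqref{eqn:product} reindexes when one passes from $(\omega\eta)_k$ to $(\omega\eta)_{k+1}$.

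\textbf{Concluding.} Since $(\omega\eta)_k$ satisfies both (1) and (2) for all $k$, we have $\omega\eta\in\C$; and $\C$ is clearly a graded $\R$-submodule containing $\C^0=\R$. Hence $\C(\E,\R)$ is a graded subalgebra of $\A(\E,\Omega^1;\R)$. I expect condition (1) to be almost immediate from the subalgebra structure of $\Hom(S_\R(\Omega^1),\R)$, while the bookkeeping of correction terms in the proof of condition (2) — tracking how Proposition \ref{prop:1stOrd} interacts with the shuffle sum — will be where essentially all the real work lies. The guiding principle throughout is that the combinatorics of \eqref{eqn:product} and the $\metr$-correction of \eqref{eqn:weakantisym} are engineered to be mutually compatible, so the corrections must telescope away.
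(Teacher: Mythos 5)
Your verification of condition (1) is fine and matches what the paper dismisses as clear. But the ``main obstacle'' you identify for condition (2) does not exist, and this misreading is the flaw in your proposal. In the product formula \eqref{eqn:product}, $\sigma$ runs over $(p-2i,q-2j)$-\emph{shuffles}, i.e.\ permutations satisfying $\sigma(1)<\cdots<\sigma(p-2i)$ and $\sigma(p-2i+1)<\cdots<\sigma(p+q-2k)$. Since the index $p+q-2k$ is the largest index occurring, whichever block it falls into, it must occupy the \emph{final} position of that block: if it lies in the first block it equals $\sigma(p-2i)$, and if in the second it equals $\sigma(p+q-2k)$. So in every summand the last argument $v_{p+q-2k}$ of $(\omega\eta)_k$ is fed to $\omega_i$ or to $\eta_j$ precisely in its last slot, where condition (2) for $\omega$ (resp.\ $\eta$) gives $\R$-linearity directly. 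This one observation is the paper's entire proof of condition (2); no correction terms, no appeal to Proposition \ref{prop:1stOrd}, and no cancellation scheme are needed.

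Beyond resting on a false premise, your proposed workaround is also not actually carried out: you assert that the correction terms produced by Proposition \ref{prop:1stOrd} ``should telescope away'' when summed over shuffles and over $i+j=k$, but you give no matching of terms and no sign bookkeeping, so even on its own terms the argument is a sketch, not a proof. The repair is simple: delete the machinery and replace it with the order-preservation property of shuffles stated above.
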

\begin{proof}
Given $\omega\in\C^p$, $\eta\in\C^q$ we must show that $\omega\eta$
satisfies conditions (1) and (2) defining $\C$. The first one is
clear, while the second one follows from the observation that, since
the expression \eqref{eqn:product} for $(\omega\eta)_k$ is a sum
over \emph{shuffle} permutations, the last argument of
$(\omega\eta)_k$ occurs either as the last argument of $\omega_i$ or
the last argument of $\eta_j$.
\end{proof}

Let $s:(\E,\metr)\To(\E',\metr')$ be a map of metric $\R$-modules.
It induces a map $s^\vee:\C(\E',\R)\To\C(\E,\R)$ given by
$$(s^\vee\omega)_k(e_1,\ldots,e_{q-2k})=\omega_k(s(e_1),\ldots,s(e_{q-2k})),$$
for every $\omega\in\C^q(\E',\R)$.
This map is obviously a morphism of graded $\R$-algebras. In other
words,

\begin{prop}\label{prop:Met->gra}
The assignment $(\E,\metr)\mapsto\C(\E,\R)$, $s\mapsto s^\vee$ is a
contravariant functor from the category $\mathbf{Met}_\R$ of metric
$\R$-modules to the category $\mathbf{gra}_\R$ of graded commutative
$\R$-algebras.
\end{prop}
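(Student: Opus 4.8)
The plan is to check in turn that $s^\vee$ actually lands in $\C(\E,\R)$, that it is a homomorphism of graded commutative $\R$-algebras, and that $(\cdot)^\vee$ sends identities to identities and reverses composition. Only the first of these requires any real care; within it, the sole substantive point is that $s^\vee\omega$ inherits the weak antisymmetry \eqref{eqn:weakantisym}, and this is exactly where the hypothesis that $s$ respect the pseudometrics --- rather than merely being $\R$-linear --- will be used.

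I would first fix $\omega\in\C^q(\E',\R)$ and verify $s^\vee\omega\in\C^q(\E,\R)$. Conditions (1) and (2) of the definition of $\C^q$ transfer at once: $(s^\vee\omega)_k(e_1,\ldots,e_{q-2k})=\omega_k(s(e_1),\ldots,s(e_{q-2k}))$ has precisely the $\Omega^1$-dependence of $\omega_k$ at the images, hence again takes values in $\X^k=\Hom_\R(S_\R^k\Omega^1,\R)$; and since $s$ is $\R$-linear, $\R$-linearity of $\omega_k$ in its last $\E$-argument passes to $s^\vee\omega$. For the weak antisymmetry, recall $\rbr=d_0\metr$. Writing \eqref{eqn:weakantisym} for $s^\vee\omega$ and applying the corresponding relation for $\omega$ (which holds for the form $\metr'$ on $\E'$), the right-hand side carries the argument $d_0\met{s(e_i)}{s(e_{i+1})}'$; invoking $\met{s(e_i)}{s(e_{i+1})}'=\met{e_i}{e_{i+1}}$ rewrites this as $-(s^\vee\omega)_{k+1}(\ldots;d_0\met{e_i}{e_{i+1}},\ldots)$, as required. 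This is the one step at which metric-preservation is indispensable.

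The remaining claims are formal. The map $s^\vee$ preserves the superscript degree and fixes the unit $1\in\R=\C^0$, and it is $\R$-linear; multiplicativity follows by inspecting the product \eqref{eqn:product}, since substituting $s$ into each $\E$-argument commutes with the shuffle sum (by $\R$-linearity of $s$) while leaving the $\Omega^1$-arguments, and hence the internal product \eqref{eqn:product'} in $\Hom(S(\Omega^1[2]),\R)$, untouched. Thus $s^\vee(\omega\eta)=(s^\vee\omega)(s^\vee\eta)$ term by term. Finally, $\mathrm{id}^\vee=\mathrm{id}$ is immediate, and for $s\colon\E\to\E'$, $t\colon\E'\to\E''$ one reads off $((t\circ s)^\vee\omega)_k(e_1,\ldots)=\omega_k(t(s(e_1)),\ldots)=(s^\vee(t^\vee\omega))_k(e_1,\ldots)$, so $(t\circ s)^\vee=s^\vee\circ t^\vee$ and the functor is contravariant. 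The whole difficulty is thereby localized in the weak-antisymmetry check, and that hinges on a single application of the metric compatibility of $s$.
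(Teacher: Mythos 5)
Your proof is correct and follows the same direct-verification route the paper takes implicitly: the paper simply defines $s^\vee$ by the substitution formula and declares it "obviously a morphism of graded $\R$-algebras," and your write-up supplies exactly the checks behind that claim, correctly isolating the weak-antisymmetry condition \eqref{eqn:weakantisym} as the one place where metric-preservation of $s$ (rather than mere $\R$-linearity) is needed. Nothing in your argument deviates from or goes beyond what the paper's construction presupposes, so there is no gap to report.
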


\subsection{The filtration $\{\C_i\}_{i\geq 0}$}

The filtration $\{\A_i\}$ on $\A$ induces one on $\C$ by
$\C_i=\A_i\cap\C$.

\begin{prop}
There is a canonical isomorphism of graded $\R$-modules
$$\gr\C\simeq\Hom_\R(S_\R(\E[1]\oplus\Omega^1[2]),\R)$$
In particular,
$$\C_0=\Hom_\R(S_\R(\E[1]),\R)$$
is a subalgebra of $\C$.
\end{prop}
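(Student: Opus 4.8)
The plan is to realise the stated isomorphism as the ``leading-symbol'' map attached to the filtration. Since $\C\subset\A$ is a graded subalgebra and $\C_i=\A_i\cap\C$, the inclusion is strictly filtered, and thus induces an injection $\gr\C\hookrightarrow\gr\A$ on associated graded modules. Composing with the identification $\gr\A\simeq\Hom(S(L),\R)$ of the preceding proposition, I obtain for each $q$ and each $i$ a map
$$
\gr_i\C^q=\C^q_i/\C^q_{i-1}\To\Hom(\Lambda^{q-2i}\E\otimes S^i\Omega^1,\R),\qquad [\omega]\mapsto\omega_i,
$$
sending a cochain to its top component $\omega_i$. This map is well defined and injective essentially by construction, since $\C^q_{i-1}$ consists precisely of those $\omega\in\C^q_i$ whose $i$-th component vanishes.

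The first substantive step is to pin down the image. By condition (1) defining $\C$, the top component $\omega_i$ is symmetric and $\R$-multilinear in its $\Omega^1$-arguments, hence factors through $S^i_\R\Omega^1$. For the $\E$-arguments I would invoke Proposition \ref{prop:1stOrd} together with \eqref{eqn:weakantisym}: applied to the top component, where $\omega_{i+1}=0$, the correction terms in both relations drop out, so $\omega_i$ is totally antisymmetric and $\R$-linear in each of its $\E$-arguments, and therefore factors through $\Lambda^{q-2i}_\R\E$. Thus the leading-symbol map lands in $\Hom_\R(\Lambda^{q-2i}_\R\E\otimes_\R S^i_\R\Omega^1,\R)$, which is exactly the degree-$q$, filtration-level-$i$ summand of $\Hom_\R(S_\R(\E[1]\oplus\Omega^1[2]),\R)$.

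The heart of the proof, and the step I expect to be the main obstacle, is surjectivity: given an $\R$-multilinear functional $\phi$ on $\Lambda^{q-2i}_\R\E\otimes_\R S^i_\R\Omega^1$, I must produce $\omega\in\C^q_i$ with $\omega_i=\phi$. I would build the lower components $\omega_{i-1},\dots,\omega_0$ by downward induction: having fixed $\omega_i,\dots,\omega_{k+1}$, the relation \eqref{eqn:weakantisym} prescribes the antisymmetrisation defect of $\omega_k$ in terms of $\omega_{k+1}$, and the task is to solve this inhomogeneous equation by an $\omega_k$ that is still valued in $\X^k$ and $\R$-linear in its last argument. The difficulty is genuine: the obvious half-symmetrised choice is not $\R$-linear, so one must correct it, and the key point is that such a correction exists precisely because the prescribed defects are assembled from the symmetric form $\metr$ and are mutually consistent (in particular they vanish when evaluated on an $\R$-linear relation among the $\E$-arguments, by bilinearity and symmetry of $\metr$). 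Carrying this out --- either through an explicit formula that contracts pairs of $\E$-arguments through $\metr$, or by passing to a free presentation of $\E$ and checking that the correction descends --- is where the real work lies.

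Finally, the ``in particular'' follows once the general statement is established, but it can also be seen directly. We have $\C_0=\C\cap\A_0$, an intersection of two graded subalgebras of $\A$ (recall $\A_0$ is a subalgebra under \eqref{eqn:product0}), hence a subalgebra of $\C$; and since $\C_{-1}=0$ we have $\gr_0\C=\C_0$. In the $i=0$ case of the leading-symbol map there are no lower components to construct, so surjectivity is automatic there, giving $\C_0=\gr_0\C\simeq\Hom_\R(\Lambda_\R\E,\R)=\Hom_\R(S_\R(\E[1]),\R)$.
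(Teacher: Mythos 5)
Your completed steps are, almost verbatim, the paper's entire proof: the paper only observes that for $\omega=(\omega_0,\ldots,\omega_i,0,\ldots)\in\C^q_i$ the top component $\omega_i$ is completely skew-symmetric in the $\E$-arguments (by \eqref{eqn:weakantisym}, since $\omega_{i+1}=0$), hence $\R$-linear in each of them by Proposition \ref{prop:1stOrd}, and that $\omega_i$ depends only on the class of $\omega$ in $\gr_i\C^q$ and vanishes exactly when $\omega\in\C^q_{i-1}$. In other words, the paper proves well-definedness, injectivity and the identification of the target module, and stops there. The surjectivity step that you single out as ``the heart of the proof'' is not addressed in the paper at all: it is passed over in silence, just as the analogous statement $\gr\A\simeq\Hom(S(L),\R)$ in the preceding section is declared ``immediate''. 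So, measured against the paper's own proof, your attempt is not missing anything the paper supplies; you have instead isolated a point where the paper is terse, and your handling of the ``in particular'' clause (where $i=0$, no lower components are needed, and a skew $\R$-multilinear $\phi$ lifted as $(\phi,0,0,\ldots)$ visibly satisfies \eqref{eqn:weakantisym}) is complete and agrees with what the paper intends.

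Your instinct that the lifting step is genuinely non-trivial is also sound, and worth recording concretely. Already for $q=2$, $i=1$, surjectivity of the symbol map says that every derivation $\phi\in\X^1$ arises as $\omega_1$ of some $\omega=(\omega_0,\omega_1)\in\C^2(\E,\R)$, i.e.\ that there exists a $\KK$-bilinear $\omega_0$, $\R$-linear in its last argument, with $\omega_0(e_1,e_2)+\omega_0(e_2,e_1)=-\phi\met{e_1}{e_2}$. As you note, the symmetric choice $-\hf\phi\met{e_1}{e_2}$ fails $\R$-linearity; in the geometric setting the standard repair is $\omega_0(e_1,e_2)=-\met{\nabla_\phi e_1}{e_2}$ for a metric connection $\nabla$, and for projective $\E$ one can transport a connection from a free module and then cancel the resulting symmetric, $\R$-bilinear defect by subtracting half of it (this uses $\hf\in\KK$). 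This is exactly the surjectivity in the ``Atiyah sequence'' $\Lambda^2_\R\E^\vee\rightarrowtail\C^2(\E,\R)\twoheadrightarrow\X^1$ that the introduction asserts, with hypotheses, but the body of the paper never proves. So: on everything the paper actually proves, your route is the same as the paper's; the one step you leave open is a real step, and it is open in the paper's proof as well, so you should not expect to find the missing argument there.
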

\begin{proof}
Observe that, if
$\omega=(\omega_0,\ldots,\omega_i,0,\ldots)\in\C^q_i$, then
$\omega_i$ is completely skew-symmetric in the first $q-2i$
variables and hence $\R$-linear in each of them by Proposition
\ref{prop:1stOrd}. Clearly, $\omega_i$ only depends on the class of
$\omega$ in $\gr_i\C^q$, and vanishes if and only if
$\omega\in\C_{i-1}^q$.
\end{proof}

\begin{rem}\label{rem:fingen}
Observe that, in particular, $\C^0=\C^0_0=\R$ and
$\C^1=\C^1_0=\Hom_\R(\E,\R)=\E^\vee$. One always has the natural
inclusion $\Lambda_\R\E^\vee\hookrightarrow\C_0$. If $\E$ is
sufficiently nice (e.g. locally free of finite rank), this inclusion
is an isomorphism, so that $\C_0$ is generated as an algebra by
$\C^{\leq 1}$. Moreover, in that case $\C^{\leq 2}$ generates
\emph{all} of $\C$.
\end{rem}

\begin{rem}
If $\KK\supset\QQ$, the image of $\C(\E,\R)$ under the
symmetrization map $\Psi^*$ \eqref{eqn:symmetrization} is the
subalgebra $\hat{\C}(\E,\R)$ of
$\Hom(S(\E[1]),\Hom(S(\Omega^1[2]),\R)$ consisting of those
$\hat{\omega}=(\hat{\omega}_0,\hat{\omega}_1,\ldots)$ which satisfy
the following two conditions:
\begin{enumerate}
\item Each $\hat{\omega}_k$ takes values in
$\X^k=\Hom_\R(S_\R^k\Omega^1,\R)\subset\Hom(S^k\Omega^1,\R)$;
\item For any $i=1,\ldots,\deg{\hat{\omega}}-2k$ and $f\in\R$,
$$\hat{\omega}_k(\ldots,fe_i,\ldots)=f\hat{\omega}_k(\ldots,e_i,\ldots)+
$$
$$
+\hf\sum_{j\neq
i}(-1)^{i-j+\theta(i-j)}\met{e_i}{e_{j}}\iota_{d_0f}\hat{\omega}_{k+1}(\ldots,\hat{e}_i,\ldots,\hat{e}_j,\ldots)$$
where $\theta$ is the Heaviside function (so that
$(-1)^{\theta(i-j)}=\frac{j-i}{|j-i|}$).
\end{enumerate}
This algebra is relevant for the Courant bracket-based formulation,
which some researchers may prefer.
\end{rem}

\subsection{The differential.} Suppose now that $(\R,\E)$ is
equipped with an almost Courant-Dorfman structure. For
$\eta\in\C^q(\E,\R)$, define $d\eta=((d\eta)_0,(d\eta)_1,\ldots)$ by
setting
\begin{equation}\label{eqn:diff}
(d\eta)_k(e_1,\ldots,e_{q-2k+1};f_1,\ldots,f_k)=
\end{equation}
$$=\sum_{\mu=1}^k\eta_{k-1}(\dl f_\mu,e_1,\ldots,e_{q-2k+1};f_1,\ldots,\widehat{f_\mu},\ldots,f_k)+$$
$$+\sum_{i=1}^{q-2k+1}(-1)^{i-1}\met{e_i}{\dl(\eta_k(e_1,\ldots,\widehat{e_i},\ldots,e_{q-2k+1};f_1,\ldots,f_k))}+$$
$$+\sum_{i<j}(-1)^i\eta_k(e_1,\ldots,\widehat{e_i},\ldots,\widehat{e_j},[e_i,e_j],e_{j+1},\ldots,e_{q-2k+1};f_1,\ldots,f_k)$$

\begin{prop} The operator $d$ is a derivation of the algebra $\C(\E,\R)$
of degree +1; if the almost Courant-Dorfman structure is a
Courant-Dorfman structure, it squares to zero.
\end{prop}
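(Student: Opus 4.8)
The plan is to establish the two assertions separately, and to reduce each to the analogous statement at the level of the convolution algebra $\A$ combined with explicit verification of the axioms. First I would check that $d$ is a derivation of degree $+1$. Since $\C$ is a graded subalgebra of $\A$ and the multiplication is given by the shuffle formula \eqref{eqn:product}, the Leibniz rule $d(\omega\eta)=(d\omega)\eta+(-1)^{\deg\omega}\omega(d\eta)$ is a formal consequence of the fact that each of the three terms in \eqref{eqn:diff} acts by ``inserting'' or ``collapsing'' arguments in a way compatible with shuffles. Concretely, the first term (insertion of $\dl f_\mu$) and the second term (the $\met{e_i}{\dl(\cdots)}$ term) are each sums over choices of a single distinguished argument, so when applied to a product they split according to whether that argument lands in the $\omega$-block or the $\eta$-block of a shuffle; the third term (the bracket-collapsing term) similarly splits, with the cross-terms where $e_i$ and $e_j$ lie in different blocks cancelling in pairs by antisymmetry of the sign. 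I expect this to be a sign-bookkeeping exercise rather than a conceptual difficulty, so I would state it as following from a direct comparison of the shuffle expansions on both sides.

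Before tackling $d^2=0$, I must first verify that $d$ actually lands in $\C$, i.e. that $d\eta$ satisfies conditions (1) and (2) of the definition of $\C^p$ as well as the weak-antisymmetry relation \eqref{eqn:weakantisym} that defines membership in $\A$. This is really the heart of the ``well-definedness'' and is where the Courant-Dorfman axioms (1)--(3) enter: condition (1) of Definition \ref{def:DA} governs how $\eta_k(\ldots,fe_i,\ldots)$ behaves, condition (2) is precisely the infinitesimal invariance needed for the $\met{e_i}{\dl(\cdots)}$ term to be well-behaved, and condition (3) supplies the symmetrization $[e_i,e_j]+[e_j,e_i]=\dl\met{e_i}{e_j}$ that reconciles the bracket term with the weak-antisymmetry relation. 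I would verify \eqref{eqn:weakantisym} for $d\eta$ by swapping two adjacent arguments $e_\ell,e_{\ell+1}$ in \eqref{eqn:diff} and checking that the discrepancy is exactly $-(d\eta)_{k+1}$ contracted against $(e_\ell,e_{\ell+1})=d_0\met{e_\ell}{e_{\ell+1}}$, using that $\eta$ itself already obeys \eqref{eqn:weakantisym} together with Proposition \ref{prop:1stOrd}.

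For $d^2=0$, I would compute $(d^2\eta)_k$ directly from \eqref{eqn:diff} and organize the resulting terms by type. The key structural point is that $d$ is the sum of three operators, and $d^2$ expands into a sum of composites; I would group these into: (a) double-bracket terms, which vanish by the Jacobi/Leibniz identity, axiom (4), applied as $[e_i,[e_j,e_k]]=[[e_i,e_j],e_k]+[e_j,[e_i,e_k]]$; (b) mixed terms combining the $\dl$-insertion or $\met{\cdot}{\dl(\cdots)}$ pieces with the bracket piece, which cancel using axioms (2) and (5), the latter in the form $[\dl f,e]=0$ and its consequence \eqref{eqn:ax5'}; and (c) the purely metric terms involving two factors of $\dl$, which cancel using axiom (6), $\met{\dl f}{\dl g}=0$, equivalently $\rho\circ\delta=0$. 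Since I have already reduced the degree-$+1$ and derivation claims to axioms (1)--(3), it is consistent that the remaining integrability axioms (4)--(6) are exactly what is consumed in forcing $d^2=0$, matching the split in Definition \ref{def:DA} between the ``almost'' axioms and the integrability conditions.

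The main obstacle I anticipate is the combinatorial cancellation in step (c) together with keeping the Koszul signs straight across all three term-types simultaneously; the bracket-bracket cancellation in (a) is a clean application of the Leibniz identity, but the interaction between the $\met{e_i}{\dl(\cdots)}$ term and the bracket term produces a proliferation of terms whose pairwise cancellation relies delicately on axiom (2) in its symmetric form $\met{e_1}{\dl\met{e_2}{e_3}}=\met{[e_1,e_2]}{e_3}+\met{e_2}{[e_1,e_3]}$. To control this I would, where possible, invoke Remark \ref{rem:full}: when $\metr$ is full it suffices to check that $(d^2\eta)_0=0$, since $\omega$ is then determined by $\omega_0$, which cuts the computation down to the classical Leibniz-cohomology differential on $\C_0=\Hom_\R(S_\R(\E[1]),\R)$ where $d^2=0$ reduces to the Leibniz identity alone; the general case then follows by a standard argument passing to a faithfully flat or universal extension where $\metr$ becomes full, or simply by noting that the identity $(d^2\eta)_k=0$ is a universal polynomial identity in the structure constants that holds whenever axioms (1)--(6) do.
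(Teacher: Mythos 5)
Your primary route---a direct, term-by-term verification that $d\C^q\subset\C^{q+1}$, that $d$ obeys the graded Leibniz rule, and that $d^2=0$, with conditions (1)--(3) of Definition \ref{def:DA} consumed by well-definedness and the derivation property, and (4)--(6) consumed by $d^2=0$---is exactly the paper's proof: the paper declares this verification ``completely straightforward but extremely tedious,'' omits it, and displays only the low-degree computations as an illustration of where (4), (5) and (6) enter. Your allocation of the axioms matches those displayed computations. Note, though, that the paper's labour-saving device is different from yours: it invokes Remark \ref{rem:fingen} (when $\E$ is locally free of finite rank, $\C$ is generated in degrees $\leq 2$, so for the derivation $d$ it suffices to check $d^2=0$ in degrees $0$, $1$, $2$), whereas you invoke Remark \ref{rem:full} (fullness of $\metr$). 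Your full-case reduction is essentially sound, with one small correction: $(d^2\eta)_0=d_{\mathrm{LP}}^2\eta_0=0$ requires not ``the Leibniz identity alone'' but also that $\R$ be a symmetric module over the Leibniz algebra $(\E,\brac)$, i.e.\ that $\rho$ be a homomorphism of Leibniz algebras (Proposition \ref{prop:anc-is-hom}), which consumes conditions (2), (3) and (5).

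The genuine gap is your passage from the full case to the general case; neither of the two arguments you offer works. First, there is no ``standard argument passing to a faithfully flat or universal extension where $\metr$ becomes full'': fullness can be unattainable under any base change or embedding. For instance, a Courant-Dorfman algebra with $\metr=0$ (a Lie algebra over $\R$ together with a central derivation $\dl$, the first example in Section \ref{sec:examples}) still has $\metr=0$ after any extension of scalars, and no embedding of a general Courant-Dorfman algebra into one with full metric is available (naive attempts, such as adding a hyperbolic summand with trivial bracket, violate condition (3)). Second, the ``universal polynomial identity'' claim is circular. The full-case argument never shows that the universal expression for $(d^2\eta)_k$, $k\geq 1$, collapses formally as a consequence of conditions (1)--(6); it deduces $(d^2\eta)_k=0$ from $(d^2\eta)_0=0$ together with the fullness-dependent fact that an element of $\C$ with vanishing zeroth component vanishes. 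That fact fails badly in general: when $\metr=0$ the relations \eqref{eqn:weakantisym} do not couple $\omega_1$ to $\omega_0$ at all, and $(0,\omega_1)$ with $0\neq\omega_1\in\X^1$ is a perfectly good element of $\C^2$. So in general the higher components of $d^2\eta$ must be killed by hand, using \eqref{eqn:diff} and all six conditions---which is precisely the tedious computation you were trying to avoid, and which the paper states is the only proof it has in this generality.
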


In this generality, the only proof we have is a verification of all
the claims (that $d\C^q\subset\C^{q+1}$, $d$ is a derivation and
$d^2=0$) by a direct calculation. It is completely straightforward
but extremely tedious; to save space and time, we omit it. However,
it is worth noting that, under the conditions of Remark
\ref{rem:fingen}, it suffices to do the calculations in low degrees.
We display these calculations here as it is certainly instructive to
see how the conditions (4), (5) and (6) of Def \ref{def:DA} imply
that $d^2=0$. Thus, for $f\in\C^0=\R$ we have
$df=(df)_0\in\C^1=\E^\vee$ with
$$(df)_0(e)=\met{e}{\dl f}=\rho(e)f$$
whereas for $\lambda\in\C^1$ we have
$d\lambda=((d\lambda)_0,(d\lambda)_1)$ with
\begin{eqnarray*}
(d\lambda)_0(e_1,e_2)&=&\rho(e_1)(\lambda(e_2))-\rho(e_2)(\lambda(e_1))-\lambda([e_1,e_2])\\
(d\lambda)_1(g)&=&\lambda(\dl g)
\end{eqnarray*}
Therefore,
$$(d(df))_0(e_1,e_2)=\rho(e_1)(\rho(e_2)f)-\rho(e_2)(\rho(e_1)f)-\rho([e_1,e_2])f=0$$
by Proposition \ref{prop:anc-is-hom}, while
$$(d(df))_1(g)=df(\dl g)=\met{\dl g}{\dl f}=0$$
by condition (6) of Definition \ref{def:DA}.

Now, if $\omega=(\omega_0,\omega_1)\in\C^2$,
$d\omega=((d\omega)_0,(d\omega)_1)$ where
\begin{eqnarray*}
(d\omega)_0(e_1,e_2,e_3)&=&\rho(e_1)\omega_0(e_2,e_3)-\rho(e_2)\omega_0(e_1,e_3)+\rho(e_3)\omega_0(e_1,e_2)-\\
&&-\omega_0([e_1,e_2],e_3)-\omega_0(e_2,[e_1,e_3])+\omega_0(e_1,[e_2,e_3])\\
(d\omega)_1(e,f)&=&\omega_0(\dl f,e)+\rho(e)\omega_1(f)
\end{eqnarray*}
from which we obtain, using Proposition \ref{prop:anc-is-hom}:
$$(d(d\lambda))_0(e_1,e_2,e_3)=\lambda([[e_1,e_2],e_3]+[e_2,[e_1,e_3]]-[e_1,[e_2,e_3]])=0$$
by condition (4) of Def \ref{def:DA}, and
$$(d(d\lambda))_1(e,f)=\rho(\dl f)\lambda(e)-\lambda([\dl f,e])=0$$
by conditions (6) and (5) of Definition \ref{def:DA}.

\begin{cor}
Given $\eta\in\C^q$,
$\overline{d\eta}=((\overline{d\eta})_0,(\overline{d\eta})_1,\ldots)$
is given by
\begin{equation}\label{eqn:diff'}
(\overline{d\eta})_k(e_1,\ldots,e_{q-2k+1};\alpha_1,\ldots,\alpha_k)=
\end{equation}
$$
=\sum_{\mu=1}^k\bar{\eta}_{k-1}(\delta\alpha_\mu,e_1,\ldots;\alpha_1,\ldots,\widehat{\alpha_\mu},\ldots,\alpha_k)+
$$
$$
+\sum_{i=1}^{q-2k+1}(-1)^{i-1}\rho(e_i)\bar{\eta}_k(e_1,\ldots,\widehat{e_i},\ldots;\alpha_1,\ldots,\alpha_k)+
$$
$$
+\sum_{i=1}^{q-2k+1}\sum_{\mu=1}^k(-1)^i\bar{\eta}_k(e_1,\ldots,\widehat{e_i},\ldots;\alpha_1,\ldots,\iota_{\rho(e_i)}d_0\alpha_\mu,\ldots,\alpha_k)+
$$
$$
+\sum_{i<j}(-1)^i\bar{\eta}_k(e_1,\ldots,\widehat{e_i},\ldots,\widehat{e_j},[e_i,e_j],e_{j+1},\ldots;\alpha_1,\ldots,\alpha_k)
$$
\end{cor}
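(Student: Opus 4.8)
The plan is to exploit the fact that $\overline{d\eta}_k$ is, by definition, the \emph{unique} symmetric $\R$-multilinear function on $\Omega^1$ extending the symmetric $k$-derivation $(d\eta)_k$ from exact differentials to all of $\Omega^1$; since the elements $d_0 f$ generate $\Omega^1$ as an $\R$-module, it suffices to check that the right-hand side of \eqref{eqn:diff'} (i) is symmetric and $\R$-multilinear in the form arguments $\alpha_1,\ldots,\alpha_k$, and (ii) reduces to \eqref{eqn:diff} upon substituting $\alpha_\mu=d_0 f_\mu$. Symmetry is manifest, since each of the four terms is summed symmetrically over the form slots. Claim (ii) is immediate: the third sum vanishes on exact forms because $d_0 d_0 f_\mu=0$, the first sum reproduces the $\dl f_\mu$-term via $\delta(d_0 f_\mu)=\dl f_\mu$, the fourth reproduces the bracket term, and the second becomes $\rho(e_i)\eta_k(\ldots;f_1,\ldots,f_k)=\met{e_i}{\dl(\eta_k(\ldots;f_1,\ldots,f_k))}$ by the defining relation \eqref{eqn:anchor} of the anchor.

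The substance of the proof is therefore claim (i). First I would fix an index $\nu$, replace $\alpha_\nu$ by $f\alpha_\nu$ with $f\in\R$, and isolate the ``anomalous'' contributions, i.e. those not manifestly proportional to $f$. Three terms contribute such anomalies. From the second sum, the Leibniz rule for the derivation $\rho(e_i)$ gives an extra summand $(-1)^{i-1}(\rho(e_i)f)\,\bar\eta_k(\ldots;\alpha_1,\ldots,\alpha_k)$. From the third sum with $\mu=\nu$, expanding $\iota_{\rho(e_i)}d_0(f\alpha_\nu)=(\rho(e_i)f)\alpha_\nu-(\iota_{\rho(e_i)}\alpha_\nu)\,d_0 f+f\,\iota_{\rho(e_i)}d_0\alpha_\nu$ produces two anomalous pieces. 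Finally, from the first sum with $\mu=\nu$, the factor $\delta(f\alpha_\nu)=f\,\delta\alpha_\nu$ enters a non-final $\E$-slot of $\bar\eta_{k-1}$, so Proposition \ref{prop:1stOrd}, applied to this first argument, generates correction terms involving $\iota_{d_0 f}\bar\eta_k$.

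Then I would check that these anomalies cancel in two groups. The $(\rho(e_i)f)\,\bar\eta_k(\ldots;\alpha_1,\ldots,\alpha_k)$ pieces coming from the second and third sums carry opposite signs $(-1)^{i-1}$ and $(-1)^i$ and cancel outright. The remaining pieces, of the shape $(\iota_{\rho(e_i)}\alpha_\nu)\,\bar\eta_k(\ldots;\ldots,d_0 f,\ldots)$ with $d_0 f$ in the $\nu$-th form slot, come from the third sum and from the Proposition \ref{prop:1stOrd} corrections to the first sum; rewriting $\iota_{\rho(e_i)}\alpha_\nu=\met{e_i}{\delta\alpha_\nu}$ by \eqref{eqn:rho-delta} shows these two families are identical up to an overall sign and so cancel as well. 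I expect this last cancellation to be the main obstacle: it is the only place where the precise bookkeeping of signs and of which $\E$-argument gets contracted really matters, and it is exactly here that the identity \eqref{eqn:rho-delta} tying the coanchor $\delta$ to the anchor $\rho$ is indispensable. Once the anomalies are gone, the right-hand side of \eqref{eqn:diff'} is $\R$-linear in $\alpha_\nu$, and by symmetry the same holds in every form argument, which completes the verification.
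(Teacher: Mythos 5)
Your proposal is correct and takes essentially the same approach as the paper: the paper's proof likewise notes that the formula visibly restricts to \eqref{eqn:diff} when $\alpha_\mu=d_0f_\mu$, and then reduces everything to $\R$-linearity in the $\alpha$'s, established ``with the help of Proposition \ref{prop:1stOrd}.'' Your anomaly-cancellation computation (the Leibniz/contraction cancellation and the cancellation via \eqref{eqn:rho-delta} against the Proposition \ref{prop:1stOrd} correction terms) merely fills in the details the paper leaves to the reader.
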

\begin{proof}
It is obvious that
$$(\overline{d\eta})_k(\ldots;d_0f_1,\ldots,d_0f_k)=(d\eta)_k(\ldots;f_1,\ldots,f_k)$$
so we only have to prove $\R$-linearity in the $\alpha$'s. This is
done with the help of Proposition \ref{prop:1stOrd}.
\end{proof}

If $s:\E\To\E'$ is a strict morphism of Courant-Dorfman algebras, a
quick inspection of the formulas reveals that $s^\vee$ commutes with
differentials. We summarize the preceding discussion in our main
theorem, extending Proposition \ref{prop:Met->gra}:

\begin{thm}\label{thm:CD->DGA}
The assignment $(\R,\E)\mapsto(\C(\E,\R),d)$, $s\mapsto s^\vee$ is a
contravariant functor from the category $\mathbf{CD}_\R$ of
Courant-Dorfman algebras over $\R$ and strict morphisms to the
category $\mathbf{dga}_\R$ of differential graded algebras with
zero-degree component equal to $\R$ and $\R$-linear dg morphisms.
\end{thm}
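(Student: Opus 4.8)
The plan is to assemble the theorem from facts already in hand, so that the only verification carrying real content is the compatibility of $s^\vee$ with the differentials. First I would settle the object part. That $\C(\E,\R)$ is a graded commutative $\R$-algebra with $\C^0=\R$ is the earlier identification of $\C(\E,\R)$ as a graded subalgebra of $\A(\E,\Omega^1;\R)$, and that $d$ is a derivation of degree $+1$ with $d^2=0$ (the latter invoking the integrability conditions (4)--(6)) is the preceding proposition. Note that $d$ is only $\KK$-linear, not $\R$-linear---already on $\C^0$ one has $d(gf)=g\,df+f\,dg$---but this is precisely what is required of the differential of an object of $\mathbf{dga}_\R$, whose degree-zero part is to be $\R$. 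Hence $(\C(\E,\R),d)$ is such an object, with no further work.

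Next I would treat the morphism part. By Proposition \ref{prop:Met->gra}, a map of metric $\R$-modules induces a morphism $s^\vee$ of graded $\R$-algebras, and since a strict morphism of Courant-Dorfman algebras is in particular such a map, $s^\vee:\C(\E',\R)\To\C(\E,\R)$ is a well-defined $\R$-linear algebra map; it restricts to the identity on $\C^0=\R$ because $(s^\vee f)=f$ for $f\in\R$. The substantive claim is $s^\vee\circ d'=d\circ s^\vee$, which I would establish by a term-by-term comparison in formula \eqref{eqn:diff}. A strict morphism preserves $\metr$, $\dl$ and $\brac$, and combining \eqref{eqn:anchor} with the first two of these gives the intertwining of anchors $\rho'\circ s=\rho$ that one must derive rather than assume. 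Writing out $(d(s^\vee\omega))_k(e_1,\ldots;f_1,\ldots)$ and $(s^\vee(d'\omega))_k(e_1,\ldots;f_1,\ldots)=(d'\omega)_k(s(e_1),\ldots;f_1,\ldots)$, the three groups of summands agree individually: the $\dl f_\mu$-terms because $s(\dl f_\mu)=\dl' f_\mu$; the middle terms because $\met{s(e_i)}{\dl' c}'=\met{s(e_i)}{s(\dl c)}'=\met{e_i}{\dl c}$ for the scalar $c=\omega_k(s(e_1),\ldots,\widehat{s(e_i)},\ldots;f_1,\ldots)$; and the bracket terms because $s([e_i,e_j])=[s(e_i),s(e_j)]'$. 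Thus $s^\vee$ is a morphism in $\mathbf{dga}_\R$.

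Finally, functoriality is immediate from the pointwise formula $(s^\vee\omega)_k(e_1,\ldots)=\omega_k(s(e_1),\ldots)$: clearly $(\mathrm{id}_\E)^\vee=\mathrm{id}$, and for composable $t:\E\To\E'$, $s:\E'\To\E''$ and $\omega\in\C(\E'',\R)$ one reads off $((s\circ t)^\vee\omega)_k(e_1,\ldots)=\omega_k(s(t(e_1)),\ldots)=(t^\vee(s^\vee\omega))_k(e_1,\ldots)$, so $(s\circ t)^\vee=t^\vee\circ s^\vee$ and the assignment is contravariant. In truth the theorem is a bookkeeping summary of the preceding results, and the only step with any content is the commutation $s^\vee d'=d s^\vee$; even there the difficulty is purely notational, the one thing to get right being that \emph{all five} structure maps are respected by a strict morphism---so that every ingredient in \eqref{eqn:diff} transforms correctly---with the anchor relation $\rho'\circ s=\rho$ the sole consequence that must be noted along the way.
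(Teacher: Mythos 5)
Your proposal is correct and takes essentially the same route as the paper: the theorem is assembled from the preceding proposition (that $d$ maps $\C^q$ to $\C^{q+1}$, is a derivation, and squares to zero for a Courant-Dorfman structure) together with Proposition \ref{prop:Met->gra}, the only new content being that $s^\vee$ intertwines the differentials, which the paper dispatches as ``a quick inspection of the formulas'' and which you spell out term by term in \eqref{eqn:diff} using preservation of $\dl$, $\metr$ and $\brac$. Your explicit three-way comparison of the summands (including noting that $\rho'\circ s=\rho$ is derived, not assumed) is exactly the inspection the paper leaves implicit, so there is nothing to correct.
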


\begin{rem}
The tangent complex $\T_\E$ we have constructed \eqref{eqn:tancx} is
indeed the tangent complex of the dg algebra $\C(\E,\R)$ in the
sense of algebraic geometry.
\end{rem}

\begin{cor}
Given a locally ringed space $(X,\O_X)$, there is a (covariant)
functor from the category $\mathbf{CA}_X$ of Courant algebroids over
$X$ to the category $\mathbf{dgS}_X$ of differential graded spaces
over $X$.
\end{cor}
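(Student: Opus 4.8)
The plan is to globalize Theorem \ref{thm:CD->DGA} by applying it section-wise over $X$ and checking that the resulting assignment is compatible with restriction, so that it defines a sheaf of differential graded $\KK$-algebras; the contravariance of $\C(-,\R)$ combined with the contravariance inherent in the notion of a ``space'' will then yield a covariant functor. First I would observe that, by Definition \ref{def:CAlgd}, a Courant algebroid $\E$ over $(X,\O_X)$ restricts over every open $U\subseteq X$ to a Courant-Dorfman algebra $(\O_X(U),\E(U))$, the operations $\metr$, $\dl$ and $\brac$ being given as maps of sheaves. Applying the construction of Section \ref{sec:complex} section-wise produces, for each $U$, the differential graded $\O_X(U)$-algebra $(\C(\E|_U,\O_X|_U),d)$, and these organize into a presheaf of differential graded $\KK$-algebras on $X$.

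The key step is to verify that this presheaf is actually a sheaf, and here the explicitness of our formulas is essential. A degree-$p$ cochain is a sequence $\omega=(\omega_0,\omega_1,\ldots)$ whose components are symmetric $k$-derivations subject to the $\R$-linearity and weak-antisymmetry conditions \eqref{eqn:weakantisym}; both these defining conditions and the structure maps---the product \eqref{eqn:product} and the differential \eqref{eqn:diff}---are expressed by local formulas in terms of $\metr$, $\dl$, $\brac$ and the anchor $\rho$. Consequently the cochain conditions can be tested on any open cover and a compatible family of local cochains glues uniquely to a global one, so that the presheaf $\mathcal{C}:=\{U\mapsto\C(\E|_U,\O_X|_U)\}$ is a sheaf of differential graded algebras with $\mathcal{C}^0=\O_X$. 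The canonical inclusion $\O_X=\mathcal{C}^0\hookrightarrow\mathcal{C}$ then exhibits the pair $(X,\mathcal{C})$ as a differential graded space over $(X,\O_X)$, i.e. an object of $\mathbf{dgS}_X$.

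For functoriality I would carry out the variance bookkeeping. A morphism $s:\E\To\E'$ of Courant algebroids induces, section-wise and compatibly with restriction, the morphism of sheaves of dg algebras $s^\vee:\C(\E',\O_X)\To\C(\E,\O_X)$ of Theorem \ref{thm:CD->DGA}; since a differential graded space is by convention contravariant in its sheaf of functions, this $s^\vee$ corresponds to a morphism $(X,\mathcal{C}_\E)\To(X,\mathcal{C}_{\E'})$ in $\mathbf{dgS}_X$. The two contravariances compose to the desired covariant functor $\mathbf{CA}_X\To\mathbf{dgS}_X$, and preservation of identities and composites is immediate from the corresponding properties of $s\mapsto s^\vee$ established in Theorem \ref{thm:CD->DGA}.

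The main obstacle I anticipate is precisely the sheaf condition of the second step, and within it the behaviour of the auxiliary modules $\Omega^1_\R$ and $\X^1=\Der(\R,\R)$ under localization: Kähler differentials and derivation modules need not commute with restriction to opens in general, so some care is required to ensure that the components $\omega_k$, which take values in $\X^k=\Hom_\R(S^k_\R\Omega^1,\R)$, assemble into an honest sheaf rather than a presheaf requiring sheafification. In the geometric cases of interest---smooth manifolds or schemes, where $\Omega^1_X$ and the tangent sheaf are the expected (quasi-)coherent sheaves---this difficulty evaporates and the locality of \eqref{eqn:diff} and \eqref{eqn:product} delivers the sheaf property at once; in full generality one either restricts attention to such $X$ or replaces the naive presheaf by its sheafification and checks that the product and differential descend.
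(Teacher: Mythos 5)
Your proposal is correct and takes essentially the same route the paper intends: the paper states this corollary without any proof, treating it as the immediate globalization of Theorem \ref{thm:CD->DGA}, which is precisely your argument of applying the construction section-wise (its formulas being local in $\metr$, $\dl$, $\brac$, $\rho$) and then flipping variance in passing from sheaves of dg algebras to dg spaces. The sheaf-versus-presheaf and localization subtleties you flag (behaviour of $\Omega^1_\R$ and $\X^1$ under restriction) are genuine but are equally elided by the paper, which does not even define $\mathbf{dgS}_X$ precisely, so your explicit acknowledgement of them is a strength rather than a defect of the writeup.
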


The complex $(\C(\E,\R),d)$ will be referred to as the
\emph{standard complex} of $(\R,\E)$, and its $q$-th cohomology
module will be denoted by $H^q(\E,\R)$. It is an analogue, for
Courant-Dorfman algebras, of the de Rham complex of a Lie-Rinehart
algebra $(\R,\L)$ (see Appendix \ref{app:LR}). In the latter case
there is an evident chain map from the de Rham complex to the
Chevalley-Eilenberg complex of the Lie algebra $\L$ with
coefficients in the module $\R$. There is an analogous statement in
our case:

\begin{prop}
The assignment $\eta\mapsto\eta_0$ is a chain map from the standard
complex $\C(\E,\R)$ to the Loday-Pirashvili complex
$\C_{\mathrm{LP}}(\E,\R)$ of the Leibniz algebra $\E$ with
coefficients in the symmetric $\E$-module $\R$.
\end{prop}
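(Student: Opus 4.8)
The plan is to show that the degree-zero truncation $\eta\mapsto\eta_0$ intertwines the differential $d$ on $\C(\E,\R)$ with the Loday--Pirashvili differential $d_{\mathrm{LP}}$ on $\C_{\mathrm{LP}}(\E,\R)$. First I would recall the two relevant facts: in our formula \eqref{eqn:diff} the component $(d\eta)_0$ only involves $\eta_0$ and $\eta_{-1}=0$, so the terms indexed by $\mu$ (which lower $k$) disappear when $k=0$; and the Loday--Pirashvili complex of the Leibniz algebra $(\E,\brac)$ with coefficients in $\R$, viewed as a \emph{symmetric} $\E$-module (so that both the left and right actions are by the anchor, $e\cdot f = \rho(e)f$), has its $q$-cochains the $\KK$-multilinear maps $\E^{\otimes q}\To\R$, with differential whose explicit shape I would write out from Appendix \ref{app:Leibniz}.

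The core computation is then to set $k=0$ in \eqref{eqn:diff} and read off
\begin{equation*}
(d\eta)_0(e_1,\ldots,e_{q+1})=\sum_{i=1}^{q+1}(-1)^{i-1}\rho(e_i)\,\eta_0(e_1,\ldots,\widehat{e_i},\ldots,e_{q+1})+\sum_{i<j}(-1)^i\eta_0(e_1,\ldots,\widehat{e_i},\ldots,\widehat{e_j},[e_i,e_j],e_{j+1},\ldots,e_{q+1}),
\end{equation*}
and to verify that this agrees term-by-term with $(d_{\mathrm{LP}}\eta_0)(e_1,\ldots,e_{q+1})$. The first sum is precisely the ``action'' part of the Leibniz differential once one uses that the module structure on $\R$ is given by $\rho$ through the definition \eqref{eqn:anchor}; the double sum, with the bracket $[e_i,e_j]$ inserted in the $j$-th slot after deleting both $e_i$ and $e_j$, is exactly the convention for the Loday--Pirashvili bracket-term, and the signs $(-1)^i$ match the standard ones. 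So the whole argument is essentially the observation that \eqref{eqn:diff} restricted to $k=0$ \emph{is} the Loday--Pirashvili formula.

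The one point requiring care is that $(d\eta)_0$ as produced by our formula must be shown to be the $0$-component of an element of $\C^{q+1}$, i.e. that $\eta\mapsto\eta_0$ is well defined as a map of complexes and not merely a formula: but this is automatic, since $d$ preserves $\C$ (the preceding Proposition) and truncation to the $0$-component is visibly $\KK$-linear. The mild obstacle is purely bookkeeping: reconciling the sign and slot conventions of our \eqref{eqn:diff} with whichever convention is fixed for $d_{\mathrm{LP}}$ in Appendix \ref{app:Leibniz}. Once the two differentials are written in the same convention, the identity $(d\eta)_0=d_{\mathrm{LP}}\,\eta_0$ holds on the nose, and the chain-map property follows immediately; I would close by remarking that this is the direct analogue, for the pair (standard complex, Loday--Pirashvili complex), of the comparison map from the de Rham complex of a Lie--Rinehart algebra to its Chevalley--Eilenberg complex.
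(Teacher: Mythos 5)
Your proposal is correct and follows exactly the paper's own argument: set $k=0$ in \eqref{eqn:diff} (so the $\mu$-terms vanish), rewrite $\met{e_i}{\dl(\cdot)}$ as $\rho(e_i)(\cdot)$ via \eqref{eqn:anchor}, and observe that the resulting expression is precisely the symmetric-module Loday--Pirashvili formula \eqref{eqn:d_LP_sym} with $[e,f]=\rho(e)f=-[f,e]$. Nothing further is needed.
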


\begin{proof}
We have
\begin{equation}\label{eqn:diff0}
(d\eta)_0(e_1,\ldots,e_{q+1})=
\end{equation}
$$=\sum_{i=1}^{q+1}(-1)^{i-1}\rho(e_i)\eta_0(e_1,\ldots,\widehat{e_i},\ldots,e_{q+1})+$$
$$+\sum_{i<j}(-1)^i\eta_0(e_1,\ldots,\widehat{e_i},\ldots,\widehat{e_j},[e_i,e_j],e_{j+1},\ldots,e_{q+1})$$
which coincides with the expression \eqref{eqn:d_LP_sym} for
$d_{LP}(\eta_0)$, where one defines
$$
[e,f]=\rho(e)f=-[f,e].
$$
\end{proof}

\subsection{On morphisms between Lie-Rinehart and Courant-Dorfman
algebras}

Let $\L$ be a Lie-Rinehart algebra and $\E$ a Courant-Dorfman algebra.
Suppose $p:\L\To\E$ is a morphism in the sense of Definition
\ref{def:morLR->CD}. We define the induced map
$$p^\vee:\C(\E;\R)\To\widetilde{\Omega}(\L,\R)$$
(see Appendix \ref{app:LR}) by setting
$$(p^\vee\omega)(x_1,\ldots,x_q)=\omega_0(p(x_1),\ldots,p(x_q))$$
Similarly, given a morphism $r:\E\To\L$ in the sense of Definition
\ref{def:morCD->LR}, we define
$$r^\vee:\widetilde{\Omega}(\L,\R)\To\C(\E,\R)$$
by
\begin{eqnarray*}
(r^\vee\omega)_0(e_1,\ldots,e_q)&=&\omega(r(e_1),\ldots,r(e_q))\\
(r^\vee\omega)_{i>0}&=&0
\end{eqnarray*}
(so the image of $r^\vee$ is contained in $\C_0(\E,\R)$).

\begin{prop}
The maps $p^\vee$ and $r^\vee$ are morphisms of differential graded
algebras.
\end{prop}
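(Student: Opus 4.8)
The plan is to handle $p^\vee$ and $r^\vee$ separately, in each case checking compatibility with the product (which is essentially formal and common to both) and with the differential (which is where the two defining conditions of Definitions \ref{def:morLR->CD} and \ref{def:morCD->LR} play their roles). For the product, note that the degree-$0$ multiplication in $\C(\E,\R)$ depends only on the $0$-components and is given by the shuffle formula \eqref{eqn:product0}, which is precisely the wedge product on the de Rham complex $\widetilde{\Omega}(\L,\R)$ of Appendix \ref{app:LR}. Since $p^\vee$ reads off $\omega_0$ and substitutes the $\R$-linear map $p$, the identity $p^\vee(\omega\eta)=p^\vee\omega\cdot p^\vee\eta$ follows term by term from \eqref{eqn:product0}. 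For $r^\vee$ the image lies in the subalgebra $\C_0$ (only the $0$-component is nonzero), and since $\C_0$ is closed under multiplication, $r^\vee\omega\cdot r^\vee\eta$ is again concentrated in degree $0$ and computed by \eqref{eqn:product0}; the same substitution argument gives $r^\vee(\omega\wedge\eta)=r^\vee\omega\cdot r^\vee\eta$.

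For $p^\vee$, before comparing differentials I would first verify that $p^\vee\omega$ really lands in $\widetilde{\Omega}(\L,\R)$, i.e.\ is alternating (and $\R$-multilinear). This is exactly where the isotropy condition $\metr\circ(p\otimes p)=0$ enters: the right-hand side of the weak-antisymmetry relation \eqref{eqn:weakantisym} for $\omega_0$ is $-\bar\omega_1(\ldots;d_0\met{e_i}{e_{i+1}})$, and substituting $e_i=p(x_i)$ makes $\met{p(x_i)}{p(x_{i+1})}=0$, so this correction vanishes and $p^\vee\omega$ is genuinely skew; the same mechanism, applied through Proposition \ref{prop:1stOrd}, kills the $\met{e_i}{e_{i+j}}$-corrections and yields $\R$-multilinearity. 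With that established I would compute $p^\vee(d\eta)=(d\eta)_0\circ p^{\otimes}$ from \eqref{eqn:diff0}: the hypothesis that $p$ commutes with anchors turns $\rho(p(x_i))$ into $\rho_\L(x_i)$, that $p$ commutes with brackets turns $[p(x_i),p(x_j)]$ into $p([x_i,x_j])$, and since $p^\vee\eta$ is now alternating the Loday--Pirashvili bracket term of \eqref{eqn:diff0} can be rewritten in the Chevalley--Eilenberg normal form of \eqref{eqn:d_LP_sym}. Conceptually: $\eta\mapsto\eta_0$ is already a chain map into the Loday--Pirashvili complex of $\E$, the Leibniz morphism $p$ induces a chain map of Loday--Pirashvili complexes (compatibly with the module structures on $\R$, since $\rho_\E\circ p=\rho_\L$), and on alternating cochains the Loday--Pirashvili complex of the Lie algebra $\L$ is its de Rham complex; composing gives $p^\vee d=d\,p^\vee$.

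For $r^\vee$ the decisive hypothesis is the other one, $r\circ\delta=0$. First, $r^\vee\omega\in\C^q$ is automatic: its only nonzero component is $\omega\circ r^{\otimes}$, and skew-symmetry of $\omega$ makes the left-hand side of \eqref{eqn:weakantisym} vanish, matching the vanishing $1$-component. The real point is that $d(r^\vee\omega)$ stays concentrated in degree $0$. Writing $\eta=r^\vee\omega$, so $\eta_k=0$ for $k\geq1$, formula \eqref{eqn:diff} shows that $(d\eta)_k$ for $k\geq2$ is assembled entirely from $\eta_{k-1}$ and $\eta_k$ with indices $\geq1$, hence vanishes; for $k=1$ only the first sum survives, giving $(d\eta)_1(e_1,\ldots;f)=\eta_0(\dl f,e_1,\ldots)=\omega(r(\dl f),r(e_1),\ldots)$. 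Since $\dl f=\delta(d_0f)$, we get $r(\dl f)=(r\circ\delta)(d_0f)=0$, so this term vanishes too. Thus $d(r^\vee\omega)$ has only a $0$-component, which equals $(d_{\widetilde{\Omega}}\omega)\circ r^{\otimes}$ as in the $p^\vee$ case; here one again uses that $r$ commutes with anchors and brackets, together with $r([e_i,e_j])+r([e_j,e_i])=r(\dl\met{e_i}{e_j})=0$, so the Dorfman bracket descends to the skew bracket. Hence $d\,r^\vee=r^\vee d$.

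The main obstacle is bookkeeping rather than ideas: reorganizing the Loday--Pirashvili bracket term of \eqref{eqn:diff0} into alternating Chevalley--Eilenberg form forces one to track the signs $(-1)^i$ against $(-1)^{i+j}$ produced when the inserted bracket is moved to the front using skew-symmetry. Once the two structural facts are isolated --- isotropy forces $p^\vee\omega$ to be alternating, and $r\circ\delta=0$ forces the higher components of $d(r^\vee\omega)$ to vanish --- everything else is a direct comparison of the explicit formulas.
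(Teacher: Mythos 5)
Your proof is correct and takes essentially the same route as the paper's own (much terser) argument: for $p^\vee$ the paper likewise invokes \eqref{eqn:weakantisym} together with conditions (1) and (2) of Definition \ref{def:morLR->CD}, and for $r^\vee$ it uses condition (1) of Definition \ref{def:morCD->LR} plus the coincidence of the Loday--Pirashvili formula \eqref{eqn:diff0} with the Chevalley--Eilenberg formula \eqref{eqn:d_LR} on alternating cochains to handle the $0$-component, and condition (2) to show $(dr^\vee\omega)_1=0$. You have simply supplied the details (isotropy forcing $p^\vee\omega$ to be alternating and $\R$-multilinear via Proposition \ref{prop:1stOrd}, the vanishing of the higher components of $d(r^\vee\omega)$, and the shuffle-product check) that the paper explicitly leaves to the reader.
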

\begin{proof}
Given $\omega\in\C^q(\L,\R)$, $(dr^\vee\omega)_0=(r^\vee d\omega)_0$
by condition (1) of Definition \ref{def:morCD->LR} and because for
alternating cochains, the Loday-Pirashvili formula \eqref{eqn:diff0}
coincides with the Cartan-Chevalley-Eilenberg formula
\eqref{eqn:d_LR}, whereas $(dr^\vee\omega)_1=0$ by condition (2) of
Definition \ref{def:morCD->LR}.

On the other hand, $dp^\vee-p^\vee d=0$ by formula
\eqref{eqn:weakantisym} and conditions (1) and (2) of Definition
\ref{def:morLR->CD}. Details are left to the reader.
\end{proof}

In particular, the morphisms $\rho:\E\To\X^1$, $\pi:\E\To\bar{\E}$
and $i:\D\To\E$ (see Prop. \ref{prop:morphisms}) give rise to the
corresponding dg maps $\rho^\vee:\widetilde{\Omega}_\R\To\C(\E,\R)$,
$\pi^\vee:\widetilde{\Omega}(\bar{\E},\R)\To\C(\E,\R)$ and
$i^\vee:\C(\E,\R)\To\widetilde{\Omega}(\D,\R)$.
\\
\\
Finally, since the de Rham algebra $(\Omega_\R,d_0)$ is initial in
the category $\mathbf{dga}_\R$, there is an evident dg map from
$\Omega_\R$ to each of these dg algebras, commuting with the above
maps. We shall denote this map by $\rho^*$, where $\rho$ is the
anchor. Explicitly,
\begin{eqnarray}\label{eqn:rhostar1}
(\rho^*\omega)_0(e_1,\ldots,e_q)&=&\iota_{\rho(e_q)}\cdots\iota_{\rho(e_1)}\omega\\
(\rho^*\omega)_{>0}&=&0 \label{eqn:rhostar2}
\end{eqnarray}

\subsection{Filtration $\{\F_i\}_{i\geq 0}$ and ideal
$\I$}\label{subsec:filtrF}
Observe that the differential $d$ does not preserve the filtration
$\{\C_i\}$. In fact, for $\omega\in\C_k$, $\omega_{k+1}=0$ but
$$(d\omega)_{k+1}(e_1,\ldots;\alpha_1,\ldots,\alpha_{k+1})=\sum_{\mu=1}^{k+1}\omega_k(\delta\alpha_\mu,e_1,\ldots;
\alpha_1,\ldots,\widehat{\alpha}_\mu,\ldots,\alpha_{k+1})
$$
does not vanish in general. Nevertheless, this formula suggests a
fix. Let us define $\F_k\subset\C_k$ as consisting of
$\omega=(\omega_0,\omega_1,\ldots)$ such that, for each $i=1,\ldots,k$,
$\omega_i$ vanishes if any $k-i+1$ of its arguments are of the form
$\delta\alpha$ for some $\alpha\in\Omega^1$. Notice that, because of
\eqref{eqn:weakantisym}, it does not matter \emph{which} of the
arguments those are. Obviously, $\F_{k+1}\subset\F_k$.

\begin{prop}
$d\F_k\subset\F_k$ and $\F_{k_1}\F_{k_2}\subset\F_{k_1+k_2}$. In
particular, $\F_0$ is a differential graded subalgebra of
$\C(\E,\R)$ equal to $\pi^\vee\C(\bar{\E},\R)$.
\end{prop}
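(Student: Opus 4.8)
The plan is to verify the three assertions in the order they are stated, treating each $\F_k$ via the single structural fact that defines it: an element $\omega\in\F_k$ has each $\omega_i$ vanishing whenever $k-i+1$ of its $\E$-arguments lie in $\delta\Omega^1$. First I would prove $d\F_k\subset\F_k$ by inspecting the formula \eqref{eqn:diff} term by term. Fix $\omega\in\F_k$ and suppose that $k-i+1$ of the $\E$-arguments of $(d\omega)_i$ are of the form $\delta\alpha$. I must show each of the three sums in \eqref{eqn:diff} vanishes. The delicate terms are the first sum, where $\dl f_\mu=\delta(d_0f_\mu)$ \emph{produces} a new $\delta$-type argument, converting an evaluation of $\omega_{i-1}$ into one where $(k-(i-1)+1)=k-i+2$ of the arguments are of the form $\delta\alpha$; since $\omega\in\F_k$ kills $\omega_{i-1}$ once $k-i+2$ of its arguments are $\delta$'s, this sum vanishes. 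In the second sum I use Proposition \ref{prop:coanc-is-ideal}: the bracket of any $e_j$ with $\delta\alpha$ again lands in $\delta\Omega^1$, so the number of $\delta$-arguments is preserved and $\omega_i$ still sees $k-i+1$ of them; the anchor term in the second line vanishes because $\rho\circ\delta=0$ by \eqref{eqn:cx}, so $\rho(\delta\alpha)=0$ removes those summands. Thus every surviving term evaluates $\omega_i$ on at least $k-i+1$ arguments in $\delta\Omega^1$, giving $d\omega\in\F_k$.

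Next I would establish the multiplicativity $\F_{k_1}\F_{k_2}\subset\F_{k_1+k_2}$ directly from the product formula \eqref{eqn:product}. Take $\omega\in\F_{k_1}$, $\eta\in\F_{k_2}$ and suppose that $(k_1+k_2)-i+1$ of the $\E$-arguments of $(\omega\eta)_i$ are of the form $\delta\alpha$. In \eqref{eqn:product} the index splits as $i=a+b$ and the arguments are distributed by a shuffle between $\omega_a$ and $\eta_b$. The key counting point is that if $\omega_a$ receives fewer than $k_1-a+1$ of the $\delta$-arguments and $\eta_b$ receives fewer than $k_2-b+1$ of them, then the total number of $\delta$-arguments distributed is at most $(k_1-a)+(k_2-b)=(k_1+k_2)-i<(k_1+k_2)-i+1$, a contradiction. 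Hence in every shuffle at least one of the two factors receives enough $\delta$-arguments to be annihilated, so each summand vanishes and $\omega\eta\in\F_{k_1+k_2}$.

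Finally, for the statement about $\F_0$, combining the two inclusions with $k=0$ and $k_1=k_2=0$ shows $\F_0$ is a differential graded subalgebra. It remains to identify $\F_0$ with $\pi^\vee\C(\bar\E,\R)$. An $\omega\in\F_0^q$ has $\omega_0$ vanishing whenever even one argument lies in $\delta\Omega^1$, and $\omega_{i>0}=0$ since $\F_0\subset\C_0$; thus $\omega$ is an element of $\C_0$ whose defining cochain $\omega_0$ is a $q$-form killing $\delta\Omega^1$. By Proposition \ref{prop:coanc-is-ideal} and Corollary \ref{cor:Ebar}, $\delta\Omega^1$ is precisely the kernel of the projection $\pi:\E\To\bar\E=\E/\delta\Omega^1$, so such $\omega_0$ factor uniquely through $\bar\E$, giving the desired bijection with $\widetilde{\Omega}(\bar\E,\R)=\C(\bar\E,\R)$ via the map $\pi^\vee$. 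I expect the main obstacle to be bookkeeping in the $d\F_k\subset\F_k$ step: one must track carefully that the $\delta$-producing first sum pushes the count \emph{up} (so is handled by membership in $\F_k$ at level $i-1$), while the bracket and anchor terms \emph{preserve} the count, and confirm that the $\F_k$ condition is insensitive to which arguments are the $\delta$'s, which is exactly guaranteed by \eqref{eqn:weakantisym}.
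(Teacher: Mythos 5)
Your overall strategy coincides with the paper's: the paper proves $\F_{k_1}\F_{k_2}\subset\F_{k_1+k_2}$ by exactly your pigeonhole count on the shuffle product \eqref{eqn:product} (with $r=k_1-i+1$, $s=k_2-k+i+1$, $r+s=n+1$), handles $d\F_k\subset\F_k$ by the same kind of term-by-term inspection of the differential using Proposition \ref{prop:coanc-is-ideal} (it inspects \eqref{eqn:diff'} where you inspect \eqref{eqn:diff}, an immaterial difference since an element of $\X^i$ vanishes if and only if it vanishes on all tuples of elements of $\R$), and dismisses the identification $\F_0=\pi^\vee\C(\bar{\E},\R)$ as obvious, which you spell out correctly.

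There is, however, one step that fails as literally stated. For the bracket terms of \eqref{eqn:diff} you argue that ``the bracket of any $e_j$ with $\delta\alpha$ again lands in $\delta\Omega^1$, so the number of $\delta$-arguments is preserved and $\omega_i$ still sees $k-i+1$ of them.'' This is false in the sub-case where \emph{both} bracketed arguments lie in $\delta\Omega^1$ (which occurs as soon as $k-i+1\geq 2$): replacing $e_a,e_b$ by $[e_a,e_b]$ consumes two $\delta$-arguments and produces only one, so $\omega_i$ sees only $k-i$ of them and the counting gives nothing. The term still vanishes, but for a different reason supplied by the same Proposition \ref{prop:coanc-is-ideal}: $[\delta\alpha,\delta\beta]=0$, so the inserted argument is literally zero and the term dies by multilinearity. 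With this sub-case added, your inspection is complete. A second, smaller omission: since $\F_k\subset\C_k$, membership of $d\omega$ in $\F_k$ also requires $(d\omega)_{k+1}=0$, which you never check explicitly; it is the $i=k+1$ instance of your analysis (the second and third sums involve $\omega_{k+1}=0$, and the first sum vanishes because $\omega_k$ kills a single $\delta$-argument --- precisely the computation that motivates the definition of $\F_k$ in the paper), so it costs one sentence, but it should be said.
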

\begin{proof}
The first statement follows by inspection of formula
\eqref{eqn:diff'}, using Proposition \ref{prop:coanc-is-ideal}. For
the second one, suppose $\omega\in\F_{k_1}$, $\eta\in\F_{k_2}$, and
consider the expression \eqref{eqn:product} for $(\omega\eta)_k$:
$$
(\omega\eta)_k(e_1,\ldots)=
$$
$$=\sum_{i\geq 1}\sum_\sigma(-1)^\sigma\omega_i(e_{\sigma(1)},\ldots)
\eta_{k-i}(e_{\sigma(\deg\omega-2i+1)},\ldots)
$$
Suppose that $n=k_1+k_2-k+1$ of the arguments are $\delta\alpha$'s.
By our assumption, $\omega_i$ vanishes if at least $r=k_1-i+1$ of
its arguments are $\delta\alpha$'s, while $\eta_{k-i}$ vanishes if
at least $s=k_2-k+i+1$ of its arguments are $\delta\alpha$'s. Now,
in each term on the right hand side, some $m$ of the arguments of
$\omega_i$ are $\delta\alpha$'s, while the $n-m$ remaining
$\delta\alpha$'s are arguments of $\eta_{k-i}$. Since $r+s=n+1$,
either $m\geq r$ or $n-m\geq s$. Therefore, either $\omega_i$ or
$\eta_{k-i}$ vanishes; hence, so does $(\omega\eta)_k$.

The last statement is obvious.
\end{proof}

Let us also consider, for each $q>0$, the submodule
$\I^q\subset\C^q$ consisting of those
$\omega=(\omega_0,\omega_1,\ldots)$ such that for each $k$ and all
$\alpha_1,\ldots,\alpha_{q-2k}\in\Omega^1$,
$$\omega_k(\delta\alpha_1,\ldots,\delta\alpha_{q-2k})=0$$
Let $\I=\{I^q\}_{q>0}$.

\begin{prop}
$\I$ is a differential graded ideal of $\C(\E,\R)$.
\end{prop}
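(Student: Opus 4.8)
The plan is to prove two things: that $\I$ is an ideal with respect to the algebra multiplication, and that $d\I\subset\I$. For the ideal property, I would take $\omega\in\I^q$ and an arbitrary $\eta\in\C^{q'}$ (and, by symmetry of the commutative product, also the reverse) and show $\omega\eta\in\I$. The key is to evaluate $(\omega\eta)_k$ on arguments all of the form $\delta\alpha_1,\ldots,\delta\alpha_{q+q'-2k}$ using the product formula \eqref{eqn:product}. Each summand is a shuffle-product term $\omega_i(\cdots)\eta_j(\cdots)$ with $i+j=k$, in which the $\E$-arguments of $\omega_i$ and $\eta_j$ are each a subcollection of the $\delta\alpha$'s. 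Thus in every summand the factor $\omega_i$ is evaluated \emph{entirely} on arguments of the form $\delta\alpha$, and since $\omega\in\I$, that factor vanishes. Hence $(\omega\eta)_k(\delta\alpha_1,\ldots)=0$ for all $k$, so $\omega\eta\in\I$.

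For the differential, I would take $\omega\in\I^q$ and examine $(\overline{d\omega})_k(\delta\alpha_1,\ldots,\delta\alpha_{q-2k+1};\beta_1,\ldots,\beta_k)$ via formula \eqref{eqn:diff'}, going term by term. The second and third sums contract one of the $\delta\alpha$-arguments with $\rho(\delta\alpha_i)$; but $\rho\circ\delta=0$ by \eqref{eqn:cx}, so these two sums vanish identically on such arguments. In the fourth (bracket) sum, the surviving $\E$-arguments are $\delta\alpha_1,\ldots$ together with $[\delta\alpha_i,\delta\alpha_j]$; by Proposition \ref{prop:coanc-is-ideal} we have $[\delta\alpha_i,\delta\alpha_j]=0$, so each such term vanishes. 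In the first sum, $\bar\omega_{k-1}$ is evaluated on $\delta\beta_\mu$ together with the remaining $\delta\alpha$'s—that is, entirely on arguments of the form $\delta(\cdot)$—and these vanish because $\omega\in\I$. Therefore every term vanishes and $d\omega\in\I$.

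The main obstacle I anticipate is bookkeeping in the bracket sum of \eqref{eqn:diff'}: one must be careful that after removing $e_i,e_j$ and inserting $[e_i,e_j]$, the resulting list of $\E$-arguments of $\omega_k$ still consists solely of $\delta$-images, so that the defining condition for $\I$ applies. Since $[\delta\alpha_i,\delta\alpha_j]$ is not merely a $\delta$-image but is \emph{identically zero} by Proposition \ref{prop:coanc-is-ideal}, this term actually drops out before even invoking membership in $\I$, which sidesteps the difficulty cleanly. The only mild subtlety is the first sum, where the inserted argument is $\delta\beta_\mu$ (from $\delta\alpha_\mu$ in the $\R$-valued version, or $\delta\beta_\mu$ in the K\"ahler version); one checks that this too is a $\delta$-image, so that all $q-2(k-1) = q-2k+2$ of the $\E$-arguments of $\bar\omega_{k-1}$ are $\delta$-images and the term vanishes by the definition of $\I$. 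With these observations the verification is routine, and I would remark that the two stability properties together give precisely that $\I$ is a differential graded ideal.
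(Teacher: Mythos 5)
Your proof is correct and is essentially identical to the paper's: the paper's entire proof reads that the result ``follows immediately upon inspecting formulas \eqref{eqn:product} and \eqref{eqn:diff'}, in view of Proposition \ref{prop:coanc-is-ideal}'', and your term-by-term verification is exactly that inspection written out. In particular, your three observations---that the shuffle structure of \eqref{eqn:product} forces each factor $\omega_i$ to be evaluated entirely on $\delta$-images, that the anchor terms in \eqref{eqn:diff'} die because $\rho\circ\delta=0$, and that the bracket terms die because $[\delta\alpha_i,\delta\alpha_j]=0$---are precisely the ingredients the paper is invoking.
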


\begin{proof}
Follows immediately upon inspecting formulas \eqref{eqn:product} and
\eqref{eqn:diff'}, in view of Proposition \ref{prop:coanc-is-ideal}.
\end{proof}

We expect that the filtrations $\{\F_i\}$ and $\{\I^{(i)}\}$ (powers
of the ideal $\I$) will be useful in computing the cohomology of
$\E$ (see subsection \ref{subsec:naive} below).

\subsection{Some Cartan-like formulas} Given an $\alpha\in\Omega^1$,
consider the operator\\ $\iota_\alpha:\C\To\C[-2]$ defined by
$(\iota_\alpha\bar{\omega})_k=\iota_\alpha\bar{\omega}_{k+1}$, i.e
\begin{equation}\label{eqn:iotaalpha}
(\iota_\alpha\bar{\omega})_k(e_1,\ldots;\alpha_1,\ldots,\alpha_k)=\bar{\omega}_{k+1}(e_1,\ldots;\alpha,\alpha_1,\ldots,\alpha_k)
\end{equation}
It is easy to check that this defines a derivation of the algebra
$\C(\E,\R)$. For $f\in\R$, define the operator $\iota_f$ so that
\begin{equation}\label{eqn:iotaf}
\overline{\iota_f\omega}=\iota_{d_0f}\bar{\omega}
\end{equation}
Similarly, for any $e\in\E$, the operator $\iota_e:\C\To\C[-1]$
given by
\begin{equation}\label{eqn:iotae}
(\iota_e\omega)_k(e_2,\ldots)=\omega_k(e,e_2,\ldots)
\end{equation}
defines a derivation of $\C(\E,\R)$ of degree $-1$.

Recall that the $\KK$-module $L=\R[2]\oplus\E[1]$ forms a graded Lie
algebra with respect to the brackets $-\metr$.

\begin{prop}
The assignments $f\mapsto\iota_{f}$ and $e\mapsto\iota_e$ define an
action of the graded Lie algebra $L$ on $\C(\E,\R)$ by derivations.
\end{prop}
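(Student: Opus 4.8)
The plan is to verify that the assignments $f\mapsto\iota_f$ and $e\mapsto\iota_e$ constitute a morphism of graded Lie algebras from $L=\R[2]\oplus\E[1]$ into the graded Lie algebra of derivations of $\C(\E,\R)$ under the graded commutator $\pbr$. Since $\iota_\alpha$ (and hence $\iota_f$) lowers degree by $2$ while $\iota_e$ lowers degree by $1$, the operators $\iota_f$ and $\iota_e$ live in degrees $-2$ and $-1$ respectively, matching the grading of $L=\R[2]\oplus\E[1]$; this is the necessary bookkeeping observation. The only nontrivial bracket in $L$ is $[e_1,e_2]=-\met{e_1}{e_2}\in\R[2]$ (the bracket being $-\metr$), so to establish the homomorphism property I must check three families of relations, namely
\begin{eqnarray*}
\pb{\iota_{e_1}}{\iota_{e_2}}&=&-\iota_{\met{e_1}{e_2}},\\
\pb{\iota_f}{\iota_e}&=&0,\\
\pb{\iota_f}{\iota_g}&=&0.
\end{eqnarray*}
Here the graded commutator is $\pb{D_1}{D_2}=D_1D_2-(-1)^{|D_1||D_2|}D_2D_1$; note that since $\iota_{e_1}$ and $\iota_{e_2}$ are both odd, their graded commutator is the \emph{anticommutator} $\iota_{e_1}\iota_{e_2}+\iota_{e_2}\iota_{e_1}$, which is exactly what one wants matching the symmetry of $\metr$.

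First I would dispose of the two easy relations. The brackets $\pb{\iota_f}{\iota_g}$ and $\pb{\iota_f}{\iota_e}$ both reduce, via the defining formula \eqref{eqn:iotaf} together with \eqref{eqn:iotaalpha} and \eqref{eqn:iotae}, to statements about iterated contractions $\iota_{d_0f}$, $\iota_{d_0g}$ and insertion of $\E$-arguments. Because contraction by a fixed one-form simply prepends that form to the list of $\Omega^1$-arguments of $\bar\omega_{k+1}$, and because $\bar\omega$ is by construction symmetric in its $\Omega^1$-arguments, two successive contractions $\iota_{d_0f}\iota_{d_0g}$ commute; this immediately gives $\pb{\iota_f}{\iota_g}=0$ since these are even operators and their ordinary commutator vanishes. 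For $\pb{\iota_f}{\iota_e}$, the operators $\iota_e$ and $\iota_f$ act on disjoint slots (the $\E$-arguments versus the $\Omega^1$-arguments of a component $\bar\omega_k$), so they manifestly commute, and as $\iota_f$ is even the graded commutator coincides with the ordinary one and vanishes.

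The main step is the relation $\iota_{e_1}\iota_{e_2}+\iota_{e_2}\iota_{e_1}=-\iota_{\met{e_1}{e_2}}$. Unwinding the definitions, the left-hand side applied to $\bar\omega$ and evaluated on arguments $e_3,\ldots$ and $\alpha_1,\ldots,\alpha_k$ gives $\bar\omega_k(e_1,e_2,e_3,\ldots)+\bar\omega_k(e_2,e_1,e_3,\ldots)$, i.e.\ the symmetrization of $\bar\omega_k$ in its first two $\E$-slots. By the fundamental weak-antisymmetry relation \eqref{eqn:weakantisym} (in its $\bar\omega$ form), this symmetrization equals $-\bar\omega_{k+1}(e_3,\ldots;\met{e_1}{e_2}^{d_0},\alpha_1,\ldots)$—more precisely $-\iota_{d_0\met{e_1}{e_2}}\bar\omega_{k+1}$ evaluated on the remaining arguments—which is exactly $-\iota_{\met{e_1}{e_2}}$ applied to $\bar\omega$ by the definitions \eqref{eqn:iotaf} and \eqref{eqn:iotaalpha}. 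I anticipate that the only real obstacle is this bookkeeping: I must make sure that \eqref{eqn:weakantisym} is applied in the right slot and that the sign and the passage from $\met{e_1}{e_2}\in\R$ to the contraction $\iota_{d_0\met{e_1}{e_2}}$ (the bracket in $L$ being valued in $\R[2]$ and acting via $\iota$ through \eqref{eqn:iotaf}) match perfectly, including verifying that both sides are genuine derivations so that it suffices to check the identity on generators or on each component $\bar\omega_k$ separately. Once this slot-and-sign accounting is done, the homomorphism property—and hence the claimed action of $L$ by derivations—follows at once.
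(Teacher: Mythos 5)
Your proposal is correct and follows essentially the same route as the paper: the paper's proof likewise observes that the only non-trivial relation is the anticommutator $\pb{\iota_{e_1}}{\iota_{e_2}}=\iota_{-\met{e_1}{e_2}}$ (their \eqref{eqn:cartan1st}), which follows immediately from \eqref{eqn:weakantisym}, exactly as in your main step. Your explicit disposal of the two trivial brackets and the sign/slot bookkeeping simply spells out what the paper leaves implicit.
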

\begin{proof}
The only non-trivial commutation relation is
\begin{equation}\label{eqn:cartan1st}
\pb{\iota_{e_1}}{\iota_{e_2}}=\iota_{-\met{e_1}{e_2}}
\end{equation}
which follows immediately from \eqref{eqn:weakantisym}.
\end{proof}
Let us now define
\begin{equation}
L_e=\pb{\iota_e}{d}\quad \mathrm{and}\quad L_f=\pb{\iota_f}{d}
\end{equation}
Then the following analogues of the well-known Cartan commutation
relations hold:
\begin{eqnarray}
L_f&=&\iota_{\dl f}\\
\pb{L_f}{\iota_e}&=&\iota_{-\met{\dl
f}{e}}=\iota_{-\rho(e)f}=\pb{L_e}{\iota_f}\\
\pb{L_{e_1}}{\iota_{e_{2}}}&=&\iota_{[e_1,e_2]}\\
\pb{L_f}{L_g}&=&0\\
\pb{L_e}{L_f}&=&L_{\met{e}{\dl f}}=L_{\rho(e)f}=-\pb{L_f}{L_e}\\
\label{eqn:cartanlast}
\pb{L_{e_1}}{L_{e_2}}&=&L_{[e_1,e_2]}
\end{eqnarray}
We leave the derivation of these identities as an easy exercise for
the reader.

\begin{rem}
The assignment $\alpha\mapsto\iota_\alpha$ is $\R$-linear while
$f\mapsto\iota_f$ and $e\mapsto\iota_e$ are not. If $d_0\alpha=0$,
one has
$$L_\alpha=\pb{\iota_\alpha}{d}=\iota_{\delta\alpha}$$
but otherwise the algebra does not close. This is because there are
more derivations of $\C(\E,\R)$ of negative degree than we have
accounted for here: there are also derivations coming from maps
$\phi\in\Hom_\R(\E,\Omega^1)$, of the form
$$
(\iota_\phi\omega)_k(e_1,\ldots)=\sum_{i\geq
0}(-1)^{i-1}\iota_{\phi(e_i)}\omega_{k+1}(e_1,\ldots,\widehat{e}_i,\ldots)
$$
A description of the full algebra of derivations will be done in the
sequel.
\end{rem}

\section{Some applications}\label{sec:appl}

\subsection{$H^2$ and central extensions} Let us consider
extensions of $\R$-modules of the form
\begin{equation}\label{eqn:centrext}
\R\stackrel{i}{\rightarrowtail}\widehat{\E}\stackrel{p}{\twoheadrightarrow}\E
\end{equation}
\begin{defn}
Suppose $(\E,\metr,\dl,\brac)$ and
$(\widehat{\E},\metr',\dl',\brac')$ are Courant-Dorfman algebras and
$p:\widehat{\E}\To\E$ is a strict morphism fitting into
\eqref{eqn:centrext}. We say that \eqref{eqn:centrext} is a
\emph{central extension} of Courant-Dorfman algebras if the
following conditions hold:
\begin{enumerate}
\item $(i(f))^\flat=0$ for all $f\in\R$;
\item $[\hat{e},i(f)]=\rho'(\hat{e})f$ for all
$\hat{e}\in\widehat{\E}$ and $f\in\R$, where $\rho'$ is the anchor
of $\widehat{\E}$.
\end{enumerate}
A (necessarily iso) \emph{morphism} of central extensions is a
morphism of extensions \eqref{eqn:centrext} which is also a
Courant-Dorfman morphism.
\end{defn}

\begin{prop}
The $\KK$-module of isomorphism classes of central extensions
\eqref{eqn:centrext} which are split as metric $\R$-modules is
isomorphic to $H^2(\E,\R)$.
\end{prop}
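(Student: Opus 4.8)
The plan is to set up the standard dictionary between central extensions and $2$-cocycles, then verify that the Courant-Dorfman axioms on $\widehat{\E}$ translate precisely into the cocycle condition $d\theta=0$ for the associated $\theta\in\C^2(\E,\R)$, and that the freedom in choosing a splitting corresponds to adding a coboundary. Since \eqref{eqn:centrext} is assumed to split as a sequence of metric $\R$-modules, I fix a splitting $s:\E\To\widehat{\E}$ of $p$ as a map of metric modules, so that $\widehat{\E}\simeq\E\oplus\R$ as a metric $\R$-module with $s(e)=(e,0)$ and $i(f)=(0,f)$; condition (1) of the definition says exactly that the copy of $\R$ is in the radical of $\metr'$, i.e. $\met{(0,f)}{(0,g)}'=0$ and $\met{s(e)}{(0,f)}'=0$, so that $\met{s(e_1)}{s(e_2)}'=\met{e_1}{e_2}$.

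\textbf{Extracting the cocycle.} With the splitting fixed, the transported bracket and derivation on $\widehat{\E}$ differ from the ``product'' ones by $\R$-valued correction terms. Writing $[s(e_1),s(e_2)]'=s([e_1,e_2])+i(\theta_0(e_1,e_2))$ and $\dl' f=s(\dl f)+i(\theta_1(f))$, I obtain a pair $\theta=(\theta_0,\theta_1)$. The first task is to check that $\theta\in\C^2(\E,\R)$, i.e. that $\theta_0$ and $\theta_1$ satisfy the defining conditions of $\C^2$: here I would use axioms (1) and (3) of Definition \ref{def:DA} for $\widehat{\E}$, together with condition (2) of the central-extension definition (which pins down how $\brac'$ interacts with $i(\R)$), to verify the weak-antisymmetry relation \eqref{eqn:weakantisym} linking $\theta_0$ and $\theta_1$, and $\R$-linearity in the last argument. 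Note $\theta_1(f)=\rho'(s(e))f$-type terms are forced by condition (2), which is why the extension data is rigid enough to land in $\C^2$.

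\textbf{The cocycle condition and the main obstacle.} Next I would compute $d\theta$ using the explicit low-degree formulas for the differential derived right after \eqref{eqn:diff}, namely the formulas for $(d\omega)_0(e_1,e_2,e_3)$ and $(d\omega)_1(e,f)$. The component $(d\theta)_0$ vanishing is the Jacobi/Leibniz axiom (4) for $\widehat{\E}$ projected through the splitting, while $(d\theta)_1=0$ encodes the compatibility between $\dl'$ and $\brac'$ captured by axioms (5) and (6). I expect the \emph{main obstacle} to be precisely this verification that $d\theta=0$ is equivalent to all six Courant-Dorfman axioms holding on $\widehat{\E}$ given that they hold on $\E$: the bookkeeping of which correction term comes from which axiom is delicate, and one must be careful that the anchor $\rho'$ of $\widehat{\E}$ agrees with $\rho$ under $p$ (so that the $\rho$-dependent terms in the differential are the correct ones) --- this follows from \eqref{eqn:anchor} and condition (1), since $\met{\cdot}{\cdot}'$ descends.

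\textbf{Well-definedness and bijectivity.} Finally, changing the splitting $s$ to another metric splitting $s'$ differs by a map $\E\To\R$ that is forced to be $\R$-linear, i.e. by an element $\lambda\in\C^1=\E^\vee$ (Remark \ref{rem:fingen}); re-computing the correction terms shows $\theta$ changes by $d\lambda$, using the explicit formula for $(d\lambda)_0$ and $(d\lambda)_1$ displayed after \eqref{eqn:diff}. This gives a well-defined class $[\theta]\in H^2(\E,\R)$, and I would check that isomorphisms of central extensions induce the same class. For surjectivity, given any $\theta=(\theta_0,\theta_1)\in\C^2$ with $d\theta=0$, I build $\widehat{\E}=\E\oplus\R$ with bracket and $\dl'$ defined by the above correction formulas and verify, by reversing the cocycle computation, that the Courant-Dorfman axioms hold. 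Injectivity (that $[\theta]=0$ forces a split extension) runs the coboundary computation in reverse. All of this is $\KK$-linear in the evident way, giving the asserted isomorphism of $\KK$-modules.
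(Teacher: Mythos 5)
Your proposal is correct and follows essentially the same route as the paper's proof: fix a metric splitting identifying $\widehat{\E}\simeq\E\oplus\R$, observe that condition (2) of the definition and the Courant-Dorfman axioms force the bracket and derivation to be the product ones plus correction terms forming a cochain in $\C^2(\E,\R)$, check that the remaining axioms (essentially (4) and (5)) on $\widehat{\E}$ amount exactly to the cocycle condition, and identify changes of splitting and isomorphisms of extensions with shifts by coboundaries $d\lambda$, $\lambda\in\E^\vee$. The only caveat is a sign: with your convention $\dl'f=s(\dl f)+i(\theta_1(f))$, the pair satisfying \eqref{eqn:weakantisym}, hence lying in $\C^2(\E,\R)$, is $(\theta_0,-\theta_1)$, which matches the paper's normalization $\dl'f=(\dl f,-\omega_1(f))$.
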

\begin{proof}
The extension being split as metric $\R$-modules means that
$\widehat{\E}$ is isomorphic to $\E\oplus\R$ as an $\R$-module in
such a way that
\begin{equation}\label{eqn:extmetr}
\met{(e_1,f_1)}{(e_2,f_2)}'=\met{e_1}{e_2}
\end{equation}
The argument follows the well-known pattern: $\dl'$ necessarily has
the form
\begin{equation}\label{eqn:extder}
\dl'f=(\dl f,-\omega_1(f))
\end{equation}
for some $\omega_1\in\X^1$, while the bracket must have the form
\begin{equation}\label{eqn:extbrac}
[(e_1,f_1),(e_2,f_2)]'=([e_1,e_2],\rho(e_1)f_2-\rho(e_2)f_1+\omega_0(e_1,e_2))
\end{equation}
for some $\omega_0$ such that
$\omega=(\omega_0,\omega_1)\in\C^2(\E,\R)$; these define a
Courant-Dorfman structure if and only if $d\omega=0$. Conversely,
any 2-cocycle $\omega$ defines a Courant-Dorfman structure on
$\E\oplus\R$ by the formulas \eqref{eqn:extmetr}, \eqref{eqn:extder}
and \eqref{eqn:extbrac}. Furthermore, the central extensions given
by cocycles $\omega$ and $\omega'$ are isomorphic if and only if
$\omega-\omega'=d\lambda$ for a $\lambda\in\C^1(\E,\R)=\E^\vee$, the
isomorphism given by $\hat{e}\mapsto\hat{e}+i(\lambda(p(\hat{e})))$,
and conversely, every such $\lambda$ gives an isomorphism of
extensions. We leave it to the reader to check the details.
\end{proof}

\begin{eg}
Every closed $\omega\in\Omega^{2,\cl}$ gives rise to a central
extension of \emph{any} Courant-Dorfman algebra by the cocycle
$\rho^*\omega$ (\eqref{eqn:rhostar1},\eqref{eqn:rhostar2}).
\end{eg}

\subsection{$H^3$ and the canonical class} Given an almost Courant-Dorfman
algebra $\E$, consider the cochain
$\Theta=(\Theta_0,\Theta_1)\in\C^3(\E,\R)$ defined as follows:
\begin{eqnarray}\label{eqn:canon0}
\Theta_0(e_1,e_2,e_3)&=&\met{[e_1,e_2]}{e_3}\\
\Theta_1(e;f)&=&-\rho(e)f \label{eqn:canon1}
\end{eqnarray}
To see that $\Theta\in\C^3$, we need to verify relations
\eqref{eqn:weakantisym}:
$$
\Theta_0(e_1,e_2,e_3)+\Theta_0(e_2,e_1,e_3)=
$$
$$
=\met{[e_1,e_2]}{e_3}+\met{[e_2,e_1]}{e_3}=\rho(e_3)\met{e_1}{e_2}=-\Theta_1(e_3;\met{e_1}{e_2})
$$
and
$$
\Theta_0(e_1,e_2,e_3)+\Theta_0(e_1,e_3,e_2)=
$$
$$
=\met{[e_1,e_2]}{e_3}+\met{[e_1,e_3]}{e_2}=\rho(e_1)\met{e_2}{e_3}=-\Theta_1(e_1;\met{e_2}{e_3})
$$
are consequences of conditions (3) and (2) of Definition \eqref{def:DA},
respectively.
\begin{prop}\label{prop:dtheta} If $\E$ is a Courant-Dorfman algebra, $d\Theta=0$;
for any $\psi\in\Omega^{3,\mathrm{cl}}$, the Courant-Dorfman algebra
$\mathrm{Tw}(\psi)(\E)$ has
$$\Theta_\psi=\Theta+\rho^*\psi$$
\end{prop}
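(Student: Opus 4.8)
The plan is to prove the two assertions in turn: first $d\Theta=0$, by evaluating each of the three components $(d\Theta)_0,(d\Theta)_1,(d\Theta)_2$ of $d\Theta\in\C^4(\E,\R)$ directly from formula \eqref{eqn:diff} (note $\Theta_2=0$, so no further components occur), and then the twist formula, by reading off the effect of the twisted bracket on $\Theta$. I would dispose of the two easy components first. For $(d\Theta)_2(;f_1,f_2)$ the only surviving terms are $\Theta_1(\dl f_1;f_2)+\Theta_1(\dl f_2;f_1)=-\rho(\dl f_1)f_2-\rho(\dl f_2)f_1$; since $\rho(\dl f)g=\met{\dl f}{\dl g}$ by \eqref{eqn:anchor}, this vanishes by condition (6). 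For $(d\Theta)_1(e_1,e_2;f)$ the formula produces $\Theta_0(\dl f,e_1,e_2)$, two anchor-derivation terms, and a bracket term $-\Theta_1([e_1,e_2];f)$. The first vanishes by condition (5) (since $[\dl f,e_1]=0$); the derivation terms contribute $-\rho(e_1)(\rho(e_2)f)+\rho(e_2)(\rho(e_1)f)$; and the bracket term equals $\rho([e_1,e_2])f$. These cancel precisely because $\rho$ is a Leibniz homomorphism (Proposition \ref{prop:anc-is-hom}).

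The substance is $(d\Theta)_0$. I would note first that, since $\eta\mapsto\eta_0$ is a chain map to the Loday--Pirashvili complex, $(d\Theta)_0=d_{LP}(\Theta_0)$ is given by \eqref{eqn:diff0}, so the claim is that $\Theta_0=\met{[\cdot,\cdot]}{\cdot}$ is a Leibniz $3$-cocycle --- the Courant--Dorfman analogue of the closedness of the Cartan $3$-form on a quadratic Lie algebra. Writing out $(d\Theta)_0(e_1,e_2,e_3,e_4)$ yields four anchor-derivation terms and six bracket terms. I would apply the invariance identity (condition (2)) to each derivation term $\pm\rho(e_i)\met{[\cdots]}{\cdots}$, splitting it into a bracket-of-bracket piece and a quadratic (bracket--bracket) piece. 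The quadratic pieces then cancel in pairs against three of the six bracket terms, leaving a single remainder $\met{[e_1,e_2]}{[e_4,e_3]}$; the bracket-of-bracket pieces combine with the remaining bracket terms, and after rewriting $[e_i,[e_j,e_k]]$ via the Leibniz/Jacobi identity (condition (4)) and then collapsing via the symmetry relation $[a,b]+[b,a]=\dl\met{a}{b}$ (condition (3)), they produce exactly the term that cancels that remainder.

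I expect this last reorganization to be the main obstacle: the sign bookkeeping across the six shuffle/bracket terms is where an error is most likely, and the non-obvious step is recognizing that the leftover $\met{[e_1,e_2]}{[e_4,e_3]}$ is cancelled only after applying the invariance identity (2) a second time, to the $e_4$-derivation term, so that $\rho(e_4)\met{[e_1,e_2]}{e_3}-\met{[e_4,[e_1,e_2]]}{e_3}=\met{[e_1,e_2]}{[e_4,e_3]}$.

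Finally, for the twist I would use that $\Tw(\psi)$ leaves $\metr$, $\dl$, and hence $\rho$, unchanged, so $\Theta_{\psi,1}=\Theta_1$ and only the zeroth component is affected. From \eqref{eqn:twist}, $\Theta_{\psi,0}(e_1,e_2,e_3)=\met{[e_1,e_2]}{e_3}+\met{\delta\iota_{\rho(e_2)}\iota_{\rho(e_1)}\psi}{e_3}$, and by \eqref{eqn:rho-delta} the second summand is $\iota_{\rho(e_3)}\iota_{\rho(e_2)}\iota_{\rho(e_1)}\psi=(\rho^*\psi)_0(e_1,e_2,e_3)$; since $(\rho^*\psi)_{>0}=0$ by \eqref{eqn:rhostar2}, this gives $\Theta_\psi=\Theta+\rho^*\psi$. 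As a consistency check, $d\Theta_\psi=0$ follows either from the first part applied to $\Tw(\psi)(\E)$, or from $d\Theta=0$ together with $d(\rho^*\psi)=\rho^*(d_0\psi)=0$ for closed $\psi$.
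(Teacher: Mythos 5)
Your proposal is correct and takes essentially the same route as the paper: a direct component-by-component evaluation of $d\Theta$ from \eqref{eqn:diff}, with $(d\Theta)_2$ killed by condition (6), $(d\Theta)_1$ by condition (5) together with Proposition \ref{prop:anc-is-hom}, $(d\Theta)_0$ by the interplay of (2), (3), (4) exactly as you describe (including the second application of (2) to the $e_4$-derivation term), and the twist formula read off from \eqref{eqn:twist}, \eqref{eqn:rho-delta} and \eqref{eqn:rhostar1}--\eqref{eqn:rhostar2}. The only organizational difference is that the paper first uses conditions (2)--(3) alone to bring each component to the closed form $(d\Theta)_0=2\met{[e_1,[e_2,e_3]]-[[e_1,e_2],e_3]-[e_2,[e_1,e_3]]}{e_4}$, $(d\Theta)_1=2\met{[\dl f,e_1]}{e_2}$, $-(d\Theta)_2=2\met{\dl f_1}{\dl f_2}$, invoking (4)--(6) only at the very end (these intermediate identities, valid for any almost Courant-Dorfman structure, are what the paper later reuses for the ``only if'' direction of Theorem \ref{thm:pb}(iii)), whereas you consume the integrability conditions mid-computation, which suffices for this proposition.
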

\begin{proof}
In fact, a computation using conditions (2) and (3) of Definition
\ref{def:DA} yields
\begin{eqnarray}\label{eqn:dtheta0}
(d\Theta)_0(e_1,e_2,e_3,e_4)&=&2\met{[e_1,[e_2,e_3]]-[[e_1,e_2],e_3]-[e_2,[e_1,e_3]]}{e_4}\\
\label{eqn:dtheta1}
(d\Theta)_1(e_1,e_2;f)&=&2\met{[\dl f,e_1]}{e_2}\\
\label{eqn:dtheta2}
-(d\Theta)_2(f_1,f_2)&=&2\met{\dl f_1}{\dl f_2}
\end{eqnarray}
Therefore, $d\Theta=0$ by conditions (4), (5) and (6) of Definition
\ref{def:DA}. The second statement follows immediately from the
formulas \eqref{eqn:twist}, \eqref{eqn:rhostar1} and
\eqref{eqn:rhostar2}.
\end{proof}
We shall call $\Theta$ the \emph{canonical cocycle} of $\E$ and its
class $[\Theta]\in H^3(\E,\R)$ -- the \emph{canonical class} of
$\E$.

\begin{rem}
If $i:\D\To\E$ is an isotropic submodule, $i^\vee\Theta$ is
$\R$-trilinear and alternating; if $\D$ is Dirac, $i^\vee\Theta=0$.
When $\metr$ is strongly non-degenerate and $\D$ is \emph{maximally}
isotropic, we can say ``and only if". This is the criterion
originally used by Courant and Weinstein \cite{CouWe} to define
Dirac structures in $\Q_0=\X^1\oplus\Omega^1$
(Example \ref{eg:exact}).
\end{rem}

\begin{eg}
For the ``original" Courant-Dorfman algebra $\Q_0$, we have $\Theta=d\omega$ where
\begin{eqnarray*}
\omega_0((v_1,\alpha_1),(v_2,\alpha_2))&=&\iota_{v_1}\alpha_2-\iota_{v_2}\alpha_1\\
\omega_1&=&0
\end{eqnarray*}
(proof left to the reader). Hence, the canonical class of $\Q_0$ is zero. It follows
that for any $\psi\in\Omega^{3,\cl}$ the canonical class of
$\Q_\psi$ is the image of $[\psi]\in H^3_{\mathrm{dR}}$.
\end{eg}

\section{The non-degenerate case}\label{sec:pb}
Let us now restrict attention to the special case of Courant-Dorfman
algebras which are non-degenerate in the sense of Definition
\ref{def:nondeg}.

\subsection{The Poisson bracket}

Recall that a strongly non-degenerate $\metr$ has an inverse
$$\pbr:\E^\vee\otimes_\R\E^\vee\To\R$$
defined by formula \eqref{eqn:inversemetr}. This operation can be
extended to a Poisson bracket on $\C=\C(\E,\R)$:
$$\pbr:\C\otimes\C\To\C[-2]$$
which we shall now define. Recall that, for an
$\omega=(\omega_0,\omega_1,\ldots)\in\C^p$, each $\omega_k$ is a
$\KK$-linear map
$$\omega_k:\E^{\otimes^{p-2k}}\To\X^k$$
which is $\R$-linear in the $(p-2k)$-th argument. Hence, by
adjunction, it gives rise to a $\KK$-linear map
$$\tilde\omega_k:\E^{\otimes^{p-2k-1}}\To\Hom_\R(S^k\Omega^1,\E^\vee)$$
defined as follows:
$$
\tilde\omega_k(e_1,\ldots,e_{p-2k-1})(f_1,\ldots,f_k)(e)=\omega_k(e_1,\ldots,e_{p-2k-1},e;f_1,\ldots,f_k)
$$
Define
$$
\omega_k^\sharp:\E^{\otimes^{p-2k-1}}\To\Hom_\R(S^k\Omega^1,\E)
$$
by $\omega_k^\sharp=(\tilde{\omega}_k)^\sharp$. Denote the inverse
of $(\cdot)^\sharp$ by $(\cdot)^\flat$.

\begin{rem}\label{rem:sharp}
These maps define an isomorphism (extending that of Definition
\ref{def:nondeg}) of graded $\R$-modules between $\C(\E,\R)$ and
$\C(\E,\E)$ whose elements are tuples $T=(T_0,T_1,\ldots)$ where
the maps
$$
T_k:\E^{\otimes^{p-2k-1}}\To\Hom_\R(S^k\Omega^1,\E)
$$
satisfy the conditions obtained by applying $(\cdot)^\sharp$ to
equations \eqref{eqn:weakantisym}; these make sense even when
$\metr$ is degenerate.
\end{rem}

Given $H\in\Hom_\R(S^i\Omega^1,\E)$, $K\in\Hom_\R(S^j\Omega^1,\E)$,
we can obtain $\langle H\cdot
K\rangle\in\Hom_\R(S^{i+j}\Omega^1,\R)$ by composing the product
\eqref{eqn:prodcan} in $\X$ with $\metr$, i.e.

$$
\langle H\cdot K\rangle(f_1,\ldots,f_{i+j})
=\sum_{\tau\in\sh(i,j)}\met{H(f_{\tau(1)},\ldots,f_{\tau(i)})}{K(f_{\tau(i+1)},\ldots,f_{\tau(i+j)})}
$$
\\
Now let $\omega=(\omega_0,\omega_1,\ldots)\in\C^p$,
$\eta=(\eta_0,\eta_1,\ldots)\in\C^q$. Let us define operations

$$\langle\omega\bullet\eta\rangle=(\langle\omega\bullet\eta\rangle_0,\langle\omega\bullet\eta\rangle_1,\ldots)$$
and
$$\omega\diamond\eta=((\omega\diamond\eta)_0,(\omega\diamond\eta)_1,\ldots)$$
with
$$
\langle\omega\bullet\eta\rangle_k,(\omega\diamond\eta)_k:\E^{\otimes^{p+q-2k-2}}\To\X^k
$$
given by the formulas
\begin{equation}\label{eqn:bullet}
\langle\omega\bullet\eta\rangle_k(e_1,\ldots,e_{p+q-2k-2})=
\end{equation}
$$
=(-1)^{q-1}\sum_{i+j=k}\sum_\sigma(-1)^\sigma
\langle\omega_i^\sharp(e_{\sigma(1)},\ldots,e_{\sigma(p-2i-1)})\cdot\eta_j^\sharp(e_{\sigma(p-2i)},\ldots,e_{\sigma(p+q-2k-2)})\rangle
$$
where $\sigma$ runs over $\sh(p-2i-1,q-2j-1)$, and
\begin{equation}\label{eqn:diamond}
(\omega\diamond\eta)_k(e_1,\ldots,e_{p+q-2k-2})=
\end{equation}
$$
=\sum_{i+j=k}\sum_\sigma(-1)^\sigma\omega_{i+1}(e_{\sigma(1)},\ldots,e_{\sigma(p-2i-2)})\circ\eta_j(e_{\sigma(p-2i-1)},\ldots,e_{\sigma(p+q-2k-2)})
$$
where $\sigma$ runs over $\sh(p-2i-2,q-2j)$, and $\circ$ in each
summand is defined as in \eqref{eqn:ins0}.

And finally, define
\begin{equation}\label{eqn:pb}
\pb{\omega}{\eta}=\omega\diamond\eta+\langle\omega\bullet\eta\rangle-(-1)^{pq}\eta\diamond\omega
\end{equation}

\begin{rem}
The subalgebra $\C_0=\Hom_\R(S(\E[1]),\R)$ is also closed under
$\pbr$; the restriction is given by
$$\pb{\omega_0}{\eta_0}=\langle\omega\bullet\eta\rangle_0$$
and in particular, for $\lambda,\mu\in\C^1$, the bracket reduces to
the formula \eqref{eqn:inversemetr}. At the other extreme, $\E=0$
(``vacuously non-degenerate"), we get
$\C=\Hom_\R(S_\R(\Omega^1[2]),\R)$, and the formulas
\eqref{eqn:product} and \eqref{eqn:pb} reduce, respectively, to the
classical formulas \eqref{eqn:prodcan} and \eqref{eqn:pbcan}.
\end{rem}

\begin{thm}\label{thm:pb}
Let $\E$ be a metric $\R$-module with a strongly non-degenerate
$\metr$.
\begin{itemize}
\item[(i)] The formula \eqref{eqn:pb} defines a non-degenerate Poisson
bracket on the algebra $\C(\E,\R)$ of degree $-2$;
\item[(ii)] For any almost Courant-Dorfman structure on $\E$, the
canonical cochain $\Theta$, defined by formulas \eqref{eqn:canon0}
and \eqref{eqn:canon1}, and the derivation $d$, defined by formula
\eqref{eqn:diff}, are related by
$$
d=-\pb{\Theta}{\cdot}
$$
\item[(iii)] The almost Courant-Dorfman structure is a Courant-Dorfman
structure if and only if
\begin{equation}\label{eqn:MC}
\pb{\Theta}{\Theta}=0
\end{equation}
\end{itemize}
\end{thm}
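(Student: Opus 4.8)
The plan is to treat the three parts in the order (i), (ii), (iii), establishing the algebraic structure first and then reading off the two dynamical statements. For part (i) I would verify the four defining properties of a degree $-2$ Poisson bracket separately. Graded antisymmetry is bookkeeping: the operation $\langle\cdot\bullet\cdot\rangle$ of \eqref{eqn:bullet} is graded symmetric, being built from the symmetric pairing $\metr$ together with the commutative shuffle product in $\X$, while the two $\diamond$-terms enter \eqref{eqn:pb} precisely as the combination $\omega\diamond\eta-(-1)^{pq}\eta\diamond\omega$; since $(p-2)(q-2)\equiv pq\pmod 2$, this is exactly the antisymmetrization appropriate to degree $-2$. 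The Leibniz rule is likewise combinatorial: as $\bullet$ and $\diamond$ sum over shuffles in the same fashion as the associative product \eqref{eqn:product}, expanding $\pb{\omega}{\eta\zeta}$ and redistributing arguments between the two factors of the product reproduces $\pb{\omega}{\eta}\zeta\pm\eta\pb{\omega}{\zeta}$. Non-degeneracy I would extract from the lowest piece of the bracket, which on $\C^1=\E^\vee$ reduces to $\pbr$ of \eqref{eqn:inversemetr}; working in the picture $\C(\E,\R)\cong\C(\E,\E)$ of Remark \ref{rem:sharp} one then checks that $\pb{\omega}{\cdot}=0$ forces $\omega$ to be a constant.

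The graded Jacobi identity is the step I expect to be the main obstacle. Rather than grind through the triple sum $\pb{\pb{\omega}{\eta}}{\zeta}+\mathrm{cyclic}$ directly, I would argue by reduction to a model case followed by naturality. For a finitely generated projective module with strongly non-degenerate form over a smooth base --- in particular for a Courant algebroid over a finite-dimensional manifold --- the explicit isomorphism $\Phi$ of Theorem \ref{thm:isom_gr_man} carries the operation \eqref{eqn:pb} to the canonical Poisson bracket on $\C(M(E))$ arising from a genuine graded symplectic structure, for which Jacobi is automatic; crucially, the intertwining of the two brackets is checked directly from the formula for $\Phi$ and does not presuppose that \eqref{eqn:pb} is Poisson, so no circularity arises. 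Since \eqref{eqn:bullet}, \eqref{eqn:diamond} and \eqref{eqn:pb} are natural in $(\E,\metr)$ and assembled from a fixed list of operations ($\metr$, $d_0$, $\dl$, multiplication in $\R$, and shuffles), the Jacobi identity evaluated on any fixed tuple of arguments is a universal identity in these data; having verified it on the model, it holds in general. The care required is that the raising map $(\cdot)^\sharp$ is not polynomial, so I would phrase the whole argument inside the $\C(\E,\E)$ picture of Remark \ref{rem:sharp}, where every operation is defined without inverting $\metr$, and invoke non-degeneracy only to transport the conclusion back. (A self-contained but lengthy direct verification is also available as a fallback.)

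For part (ii) I would compute $\pb{\Theta}{\cdot}$ straight from \eqref{eqn:pb}, using the two substitutions $\Theta_0^\sharp(e_1,e_2)=[e_1,e_2]$ and $\Theta_1^\sharp(d_0f)=-\dl f$, which follow at once from \eqref{eqn:canon0}, \eqref{eqn:canon1} and the definition of $(\cdot)^\sharp$. Collecting terms and matching against \eqref{eqn:diff}: the $\langle\bullet\rangle$-contribution reproduces the Dorfman-bracket sum $\sum_{i<j}(-1)^i\eta_k(\ldots,[e_i,e_j],\ldots)$ (from $\Theta_0^\sharp$) together with the insertion sum $\sum_\mu\eta_{k-1}(\dl f_\mu,\ldots)$ (from $\Theta_1^\sharp=-\dl$), while the two $\diamond$-contributions together reproduce the anchor sum $\sum_i(-1)^{i-1}\met{e_i}{\dl(\eta_k(\ldots,\widehat{e_i},\ldots))}$; the prefactor in \eqref{eqn:pb} fixes the overall sign so that $d=-\pb{\Theta}{\cdot}$. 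This is routine but bookkeeping-heavy.

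Finally, part (iii) is almost immediate given (ii) and the earlier computation, and in particular needs no appeal to Jacobi. Applying $d=-\pb{\Theta}{\cdot}$ to $\Theta$ itself gives $\pb{\Theta}{\Theta}=-d\Theta$. By formulas \eqref{eqn:dtheta0}--\eqref{eqn:dtheta2}, which hold for any almost Courant-Dorfman structure since they use only conditions (2) and (3) of Definition \ref{def:DA}, the components of $d\Theta$ are $2\metr$-pairings of the Jacobiator, of $[\dl f,e]$, and the expression $2\met{\dl f_1}{\dl f_2}$. Because $\metr$ is strongly non-degenerate, the vanishing of the first two pairings for all test arguments is equivalent to the vanishing of the Jacobiator and of $[\dl f,e]$, while the third is condition (6) verbatim; altogether $d\Theta=0$ is equivalent to conditions (4), (5) and (6). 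Hence $\pb{\Theta}{\Theta}=0$ holds precisely when the almost Courant-Dorfman structure is a Courant-Dorfman structure, as claimed.
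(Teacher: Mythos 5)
Your parts (ii) and (iii) are sound and essentially identical to the paper's own argument: (ii) is the direct computation matching $-\pb{\Theta}{\cdot}$ against \eqref{eqn:diff}, and (iii) combines (ii) with the formulas \eqref{eqn:dtheta0}--\eqref{eqn:dtheta2}, strong non-degeneracy and $\hf\in\KK$, exactly as the paper does. The non-Jacobi checks in (i) (graded antisymmetry, Leibniz rule, non-degeneracy) are likewise the direct verification the paper invokes, though you should add one item to your list: you must also check that $\pb{\omega}{\eta}$ as defined by \eqref{eqn:bullet}, \eqref{eqn:diamond}, \eqref{eqn:pb} actually lies in $\C(\E,\R)$, i.e.\ satisfies the relations \eqref{eqn:weakantisym} and the two conditions cutting $\C^p$ out of $\A^p$; this closure is part of ``defines a Poisson bracket on the algebra $\C(\E,\R)$''.

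The genuine gap is your treatment of the Jacobi identity. Your reduction via Theorem \ref{thm:isom_gr_man} does establish Jacobi when $\R=C^\infty(M_0)$ and $\E=\Gamma(E)$ for a finite-rank bundle (and you are right that no circularity arises, since the proof of that theorem rests on the Jacobi identity for $\C(M(E))$ and on Corollary \ref{cor:higherder}, neither of which presupposes part (i)). But Theorem \ref{thm:pb} is stated for an arbitrary ground ring $\KK\ni\hf$, an arbitrary commutative $\KK$-algebra $\R$, and an arbitrary $\R$-module $\E$ with strongly non-degenerate $\metr$ --- no finiteness, projectivity or smoothness assumptions. The step ``the Jacobi identity evaluated on any fixed tuple of arguments is a universal identity in these data; having verified it on the model, it holds in general'' is asserted, not proved, and the naturality you cite cannot deliver it: the functoriality established in the paper (Proposition \ref{prop:Met->gra}) is along morphisms of metric modules over a \emph{fixed} base ring, so there is no morphism along which an identity verified for $C^\infty$ models pulls back to a general $(\KK,\R,\E,\metr)$. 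To make such a transfer rigorous you would need a generic instance (a universal ring, free module, generic metric, and ``generic'' cochains) together with a proof that the smooth models detect all relations among the relevant natural operations; this is delicate precisely because cochains are not free parameters but functions constrained by the multiderivation and weak-antisymmetry axioms, and because for a general $\R$ the modules $\Omega^1$ and $\X^k$ can behave very differently from the smooth case. This is why the paper --- and Keller--Waldmann, whom it cites as having treated the finitely generated, projective, full, non-degenerate case with far fewer calculations --- resorts to direct verification in the general setting; your ``fallback'' is in fact the proof, and it is not carried out in your proposal.
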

\begin{proof}
The first two statements are proved by a direct verification. The
``if" part of (iii) follows from (ii) and Proposition
\ref{prop:dtheta}, the ``only if" -- by formulas \eqref{eqn:dtheta0},
\eqref{eqn:dtheta1}, \eqref{eqn:dtheta2}, the nondegeneracy of
$\metr$ and the assumption that $\hf\in\KK$.
\end{proof}

\begin{rem}
Observe that $\brac$ and $\dl$ can be recovered from $\Theta$ via
\begin{eqnarray*}
[e_1,e_2]&=&\Theta_0^\sharp(e_1,e_2)\\
-\dl f&=&\Theta_1^\sharp(f)
\end{eqnarray*}
So the Poisson bracket $\pbr$ defines the differential graded Lie
algebra controlling the deformation theory of Courant-Dorfman
algebras with fixed underlying metric module with non-degenerate
$\metr$. In fact, we can use $(\cdot)^\sharp$ to lift $\pbr$ to
$\C(\E,\E)$ (Remark \ref{rem:sharp}) and obtain an explicit description of this bracket which
makes sense even if $\metr$ is degenerate. This is similar to the
description of the deformation complex of a Lie algebroid by Crainic
and Moerdijk \cite{CraMoe}. We shall postpone writing down these
formulas until the sequel to this paper, dealing with modules and
deformation theory.

We should mention, however, that, under the assumptions that $\E$ be projective and finitely
generated, and $\metr$ be full and strongly non-degenerate, the complex $\C(\E,\E)$ was already considered by Keller and Waldmann \cite{KelWald} who obtained for it a result (Theorem 3.17 of \emph{loc. cit.}) equivalent to our Theorem \ref{thm:pb} for $\C(\E,\R)$. The additional assumptions considerably reduce the amount of necessary calculations, in view of Remarks \ref{rem:full} and \ref{rem:fingen}.
\end{rem}

\subsection{The canonical class as obstruction to re-scaling} The
canonical class $[\Theta]$ has a familiar deformation-theoretic
interpretation. Let $t$ be a formal variable, and extend everything
$\KK[[t]]$-linearly to $\R[[t]]$, $\E[[t]]$. If $\Theta$ satisfies
the Maurer-Cartan equation \eqref{eqn:MC} and thus defines a
Courant-Dorfman structure on $(\R,\E)$, so does $\Theta_t=e^t\Theta$
on $(\R[[t]],\E[[t]])$. The question is, when is $\Theta_t$
isomorphic to $\Theta$? ``Isomorphic" here means that there exists an
automorphism $\phi(t)$ of the Poisson algebra $\C[[t]]$ with
$\phi(0)=\mathrm{id}$, and whose infinitesimal generator is
Hamiltonian with respect to an $\omega(t)\in\C^2[[t]]$, such that
$$
\phi(t)\Theta=\Theta_t
$$
Differentiating at $t=0$ immediately yields
$$
d\omega(0)=\Theta
$$
so in particular $[\Theta]=0$. Conversely, if this is the case,
$\phi(t)=\exp(t\pb{\omega(0)}{\cdot})$ does the trick.

If $\KK\supset\RR$, we can ask the same question for $t$ a real
number, rather than a formal variable. In this case, the condition
$[\Theta]=0$ is still necessary but not sufficient unless there
exists an $\omega(t)$ that integrates to a flow.

\subsection{Cartan relations and iterated brackets}
The following is easily verified:
\begin{prop}
Given $f\in\R$, $e\in\E$,
\begin{eqnarray*}
-\iota_f&=&\pb{f}{\cdot}\\
\iota_e&=&\pb{e^\flat}{\cdot}
\end{eqnarray*}
where $\iota_f$ and $\iota_e$ are given by \eqref{eqn:iotaf} and
\eqref{eqn:iotae}. Thus, the equations \eqref{eqn:cartan1st} --
\eqref{eqn:cartanlast} express commutation relations among
Hamiltonian derivations of $\C(\E,\R)$, analogous to the well-known
Cartan relations among derivations of $\Omega_\R$.
\end{prop}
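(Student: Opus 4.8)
The plan is to verify each identity componentwise from the defining formula \eqref{eqn:pb}, exploiting the fact that $f$ and $e^\flat$ are concentrated in the lowest convolution degrees. Both sides of each asserted equation are derivations of $\C(\E,\R)$ (of degrees $-2$ and $-1$ respectively), so one could in principle test them on a generating set; but since $\C$ need not be generated in low degree, it is cleaner, and just as quick, to compare the full sequences of components directly. For the first identity take $\omega=f\in\C^0$, so that $f_0=f$ and $f_i=0$ for $i>0$. In $f\diamond\eta$ only the components $f_{i+1}$ occur, which all vanish, so $f\diamond\eta=0$; in $\langle f\bullet\eta\rangle$ the relevant shuffle set $\sh(p-2i-1,\,\cdot)=\sh(-2i-1,\,\cdot)$ is empty, as $f$ carries no $\E$-argument to raise, so $\langle f\bullet\eta\rangle=0$. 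Hence $\pb{f}{\eta}=-\eta\diamond f$. In $\eta\diamond f$ only the summand $j=0$ survives, the shuffle is trivial, and the insertion \eqref{eqn:ins0} against $f\in\X^0=\R$ is by definition contraction with $d_0f$; comparison with \eqref{eqn:iotaf} and \eqref{eqn:iotaalpha} gives $\eta\diamond f=\iota_f\eta$, whence $\pb{f}{\cdot}=-\iota_f$.

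For the second identity take $\omega=e^\flat\in\C^1$, so $(e^\flat)_0=e^\flat$ and higher components vanish; as before $e^\flat\diamond\eta=0$. In $\langle e^\flat\bullet\eta\rangle$ only $i=0$ contributes, and the raised component $(e^\flat)_0^\sharp$ is, directly from $(\cdot)^\sharp$ being the inverse of $(\cdot)^\flat$, the element $e\in\E$. Evaluating $\langle e\cdot\eta_k^\sharp(\ldots)\rangle$ and unwinding the definition of $\eta_k^\sharp$ yields
\[
\langle e^\flat\bullet\eta\rangle_k(e_1,\ldots,e_{q-2k-1})=(-1)^{q-1}\eta_k(e_1,\ldots,e_{q-2k-1},e),
\]
that is, $e$ inserted in the \emph{last} $\E$-slot. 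The term $\eta\diamond e^\flat$ contributes, through \eqref{eqn:ins0} applied to the scalars $\met{e}{e_m}$, a sum of $\eta_{k+1}$-corrections each carrying $\iota_{d_0\met{e}{e_m}}$. The final step is to transport $e$ from the last to the first slot in the bullet term by repeated use of the weak-antisymmetry relation \eqref{eqn:weakantisym} (equivalently Proposition \ref{prop:1stOrd}): the leading sign is $(-1)^{q-1}(-1)^{q-2k-1}=1$, producing $\eta_k(e,e_1,\ldots,e_{q-2k-1})=(\iota_e\eta)_k$, while each transposition releases exactly one $\eta_{k+1}$-correction. These corrections are designed to cancel the matching terms of $-(-1)^{q}\,\eta\diamond e^\flat$, leaving $\pb{e^\flat}{\cdot}=\iota_e$.

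I expect the only genuine difficulty to lie in this last reorganization: matching, \emph{with signs}, the corrections generated by the antisymmetrization against the $\eta\diamond e^\flat$ summand. The vanishing of $f\diamond\eta$, $e^\flat\diamond\eta$, and one of the remaining pieces is pure degree bookkeeping, and the identification $(e^\flat)^\sharp=e$ is immediate; the care is entirely in reconciling the shuffle sign $(-1)^\sigma$ of \eqref{eqn:diamond} with the signs appearing in \eqref{eqn:weakantisym} as $e$ is moved leftward through the arguments.

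Finally, once both identities are established, every operator occurring in \eqref{eqn:cartan1st}--\eqref{eqn:cartanlast} is a Hamiltonian derivation of the Poisson algebra $\C(\E,\R)$. The graded Jacobi identity for $\pbr$ then gives $\pb{\pb{a}{b}}{\cdot}=[\pb{a}{\cdot},\pb{b}{\cdot}]$, so the listed commutation relations are precisely the component form of this identity applied to the Hamiltonians $f,g$ and $e_1^\flat,e_2^\flat$; this is exactly the stated analogy with the Cartan calculus on $\Omega_\R$.
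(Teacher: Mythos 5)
Your verification is correct: the degree bookkeeping ($f\diamond\eta=0$, $\langle f\bullet\eta\rangle=0$, $e^\flat\diamond\eta=0$), the identification $\eta\diamond f=\iota_f\eta$ via \eqref{eqn:ins0}, and the sign-cancellation between the corrections released by \eqref{eqn:weakantisym} when moving $e$ to the first slot and the terms of $-(-1)^{q}\eta\diamond e^\flat$ all check out, as does the concluding appeal to the graded Jacobi identity. This is essentially the paper's own approach: the paper states the proposition is ``easily verified,'' i.e.\ it leaves exactly this direct componentwise computation from \eqref{eqn:bullet}, \eqref{eqn:diamond} and \eqref{eqn:pb} to the reader.
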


\begin{cor}\label{cor:higherder}
For any $\omega=(\omega_0,\omega_1,\ldots)\in\C^p(\E,\R)$, the
following relation holds:
\begin{equation}\label{eqn:higherder}
\omega_k(e_1,\ldots,e_{p-2k};f_1,\ldots,f_k)=
\end{equation}
$$
=(-1)^{\frac{(p-2k)(p-2k-1)}{2}}\{\cdots\{\omega,e_1^\flat\},\cdots\},e_{p-2k}^\flat\},f_1\},\cdots\},f_k\}
$$
\end{cor}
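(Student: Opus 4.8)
The plan is to rewrite the iterated Poisson bracket on the right-hand side of \eqref{eqn:higherder} as a composite of the insertion derivations $\iota_e$ and $\iota_f$ from \eqref{eqn:iotae} and \eqref{eqn:iotaf}, and then to unwind the definitions to see that this composite reproduces $\omega_k$ evaluated on the given arguments, up to the stated sign. First I would record the two identities from the preceding Proposition, $\iota_e=\pb{e^\flat}{\cdot}$ and $-\iota_f=\pb{f}{\cdot}$, and convert them into \emph{right} brackets by the graded antisymmetry of $\pbr$. From the defining formula \eqref{eqn:pb} one reads off $\pb{\omega}{\eta}=-(-1)^{pq}\pb{\eta}{\omega}$ for $\omega\in\C^p$, $\eta\in\C^q$; applying this with $\eta=f\in\C^0$ gives $\pb{\omega}{f}=\iota_f\omega$ (no sign), while applying it with $\eta=e^\flat\in\C^1$ gives $\pb{\omega}{e^\flat}=(-1)^{\deg\omega+1}\iota_e\omega$.

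Next I would evaluate the two kinds of insertion at the level of components. By \eqref{eqn:iotae}, applying $\iota_{e_1},\ldots,\iota_{e_{p-2k}}$ in succession prepends these $\E$-arguments in order, so that $\iota_{e_{p-2k}}\cdots\iota_{e_1}\omega\in\C^{2k}$ has top component equal to $\omega_k(e_1,\ldots,e_{p-2k};\cdot)$. Then, by \eqref{eqn:iotaf}, each $\iota_{f_\mu}$ feeds in one $\R$-argument and lowers the degree by two, and after all $k$ of them the cochain lands in $\C^0=\R$ with value precisely $\omega_k(e_1,\ldots,e_{p-2k};f_1,\ldots,f_k)$. Thus, ignoring signs, the composite $\iota_{f_k}\cdots\iota_{f_1}\iota_{e_{p-2k}}\cdots\iota_{e_1}\omega$ is exactly the number on the left-hand side of \eqref{eqn:higherder}.

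It remains to collect the signs, and this is the only step requiring genuine care. The $f$-insertions contribute no sign. The input to the $j$-th $e$-insertion has degree $p-j+1$, so $\pb{\cdot}{e_j^\flat}$ contributes the scalar $(-1)^{(p-j+1)+1}=(-1)^{p+j}$; multiplying these gives a total sign with exponent $\sum_{j=1}^{p-2k}(p+j)=(p-2k)p+(p-2k)(p-2k+1)/2$. To finish I would check that this exponent is congruent modulo $2$ to $(p-2k)(p-2k-1)/2$: their difference is $(p-2k)(p+1)$, which is always even (if $p$ is even then $p-2k$ is even, and if $p$ is odd then $p+1$ is even). Hence the accumulated sign is exactly $(-1)^{(p-2k)(p-2k-1)/2}$, and since it squares to $1$, rearranging yields \eqref{eqn:higherder}. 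I expect the sign bookkeeping to be the main obstacle; the rest is a direct unwinding of \eqref{eqn:iotae}, \eqref{eqn:iotaf} and the Proposition.
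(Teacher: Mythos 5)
Your proof is correct and is essentially the paper's own (implicit) argument: the Corollary is presented as an immediate consequence of the preceding Proposition identifying $\iota_e=\pb{e^\flat}{\cdot}$ and $-\iota_f=\pb{f}{\cdot}$, iterated via the graded antisymmetry of the degree $-2$ bracket exactly as you do, and your sign bookkeeping (exponent difference $(p-2k)(p+1)$, always even) checks out. One small sourcing point: the antisymmetry $\pb{\omega}{\eta}=-(-1)^{pq}\pb{\eta}{\omega}$ is best cited from Theorem \ref{thm:pb}(i) rather than ``read off'' from \eqref{eqn:pb}, since for the $\langle\,\cdot\bullet\cdot\,\rangle$ term this identity is not manifest from the formula, but it is guaranteed by the theorem, which is available at this point in the paper.
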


\subsection{Relation with graded symplectic
manifolds}\label{subsec:grsymp} In this subsection we follow the
notation and terminology of \cite{Roy4-GrSymp}. Let $M_0$ be a
finite-dimensional $C^\infty$ manifold, $E\To M_0$ a vector bundle
of finite rank equipped with a pseudometric $\metr$. Consider the
isometric embedding
\begin{eqnarray*}
j:E&\To&E\oplus E^*\\
e&\longmapsto&(e,\hf e^\flat)
\end{eqnarray*}
with respect to the canonical pseudometric on $E\oplus E^*$,
inducing an embedding of graded manifolds
\begin{equation*}
j[1]:E[1]\To(E\oplus E^*)[1]
\end{equation*}
Define $M=M(E)$ to be the pullback of
$$T^*[2]E[1]\stackrel{p}{\To}(E\oplus E^*)$$
along $j[1]$, and let $\Xi\in\Omega^2(M)$ be the pullback of the
canonical symplectic form on $T^*[2]E[1]$. This $\Xi$ is closed, has
degree +2 with respect to the induced grading, and is non-degenerate
if and only if $\metr$ is, in which case its inverse gives a Poisson
bracket on the algebra $\C(M)$ of polynomial functions on $M$, of
degree -2. Conversely, we proved in \cite{Roy4-GrSymp} that every
degree-two graded symplectic manifold is isomorphic to $M(E)$ for
some $E$.

\begin{thm}\label{thm:isom_gr_man}
Let $\R=C^\infty(M_0)$, $\E=\Gamma(E)$. The map
$$\Phi:\C(M(E))\To\C(\E,\R)$$
given, for $\omega\in\C^p(M(E))$, by
$$
(\Phi\omega)_k(e_1,\ldots,e_{p-2k};f_1,\ldots,f_k)=
$$
$$
=(-1)^{\frac{(p-2k)(p-2k-1)}{2}}\{\cdots\{\omega,e_1^\flat\},\cdots\},e_{p-2k}^\flat\},f_1\},\cdots\},f_k\}
$$
is an isomorphism of graded Poisson algebras.
\end{thm}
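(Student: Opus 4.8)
The plan is to exhibit $\Phi$ as an isomorphism by checking, in turn, that it is well defined into $\C(\E,\R)$, that it is a homomorphism of graded commutative algebras, that it is bijective, and finally that it intertwines the two Poisson brackets. The whole argument rests on one observation: the graded Poisson algebra $\C(M(E))$ of polynomial functions on the degree-$2$ N-manifold $M(E)$ is generated over $\R$ by its components of degree $\le 2$, and its bracket (of degree $-2$) is a biderivation, hence is completely determined by its values on generators. Among the low-degree generators the only nonzero bracket is $\pb{e_1^\flat}{e_2^\flat}=\met{e_1}{e_2}$ --- which is exactly why the map $j\colon e\mapsto(e,\hf e^\flat)$ carries the factor $\hf$ --- together with $\pb{f}{g}=0$ and $\pb{f}{e^\flat}=0$, these last two simply because $\pbr$ lowers degree by $2$ and $\C(M(E))$ is non-negatively graded. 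These match the corresponding brackets in $\C(\E,\R)$ (formula \eqref{eqn:inversemetr}, using $(e^\flat)^\sharp=e$, and the restriction of \eqref{eqn:pb} to $\C^{\le 1}$).

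First I would verify well-definedness. The defining conditions of $\C(\E,\R)$ --- symmetry of $(\Phi\omega)_k$ in the $\R$-arguments, the weak-antisymmetry relation \eqref{eqn:weakantisym} in the $\E$-arguments, and $\R$-linearity in the last $\E$-argument with values in $\X^k$ --- all reduce to the graded Jacobi and Leibniz identities for $\pbr$ together with the generator brackets above. For instance, swapping two adjacent $e_i^\flat,e_{i+1}^\flat$ and adding produces, by Jacobi, a bracket of the truncated expression with $\pb{e_i^\flat}{e_{i+1}^\flat}=\met{e_i}{e_{i+1}}$, which is precisely the right-hand side of \eqref{eqn:weakantisym}; symmetry in the $f_\mu$ follows from $\pb{f_\mu}{f_\nu}=0$; and the spurious term in the last-argument $\R$-linearity carries a factor $e^\flat$ that is annihilated on degree grounds by the subsequent brackets with the $f$'s. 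Next, $\Phi$ is an algebra homomorphism: since bracketing with a single generator is a derivation, the iterated-bracket formula applied to a product $\omega\eta$ expands as a sum over the ways of distributing the generators between the two factors, i.e. as a sum over shuffles, reproducing the product \eqref{eqn:product}; the prefactor $(-1)^{(p-2k)(p-2k-1)/2}$ and the Koszul signs combine to give the signs appearing there.

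For bijectivity I would use associated gradeds. Filter $\C(\E,\R)$ by the $\{\C_i\}$ of the excerpt and $\C(M(E))$ by the number of degree-$2$ generators (equivalently, by the power of the momenta conjugate to the base, i.e. by the $\X^1$-content); both filtrations are finite in each degree, with associated graded the free algebra $S_\R(\X^1[-2]\oplus\E^\vee[-1])$. A glance at the leading term of the iterated-bracket formula shows that $\Phi$ is filtered and that the induced map on associated graded is the canonical identification of these two copies of the free algebra (it is the identity on the generators $\E^\vee$ and $\X^1$); since the filtrations are exhaustive and finite in each degree, $\Phi$ is a bijection.

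The main obstacle is the last step: that $\Phi$ preserves $\pbr$. By the homomorphism property and the biderivation axiom this reduces once more to generators, and in degrees $\le 1$ it is the matching already noted. The genuine difficulty is in degree $2$, where $\C^2(M(E))$ sits in the Atiyah sequence $\Lambda^2_\R\E^\vee\rightarrowtail\C^2\twoheadrightarrow\X^1$, which does not split canonically (nor even globally in general), so there is no preferred lift of a vector field to a degree-$2$ function on which to test the bracket. I would circumvent this by localizing over $M_0$: on an open set where $E$ trivializes, a metric connection --- i.e. a splitting of the Atiyah sequence --- exists, and there the brackets of degree-$2$ generators are given by the explicit Rothstein-bracket formulas of \cite{Roy4-GrSymp}, which one matches termwise against \eqref{eqn:bullet}, \eqref{eqn:diamond} and \eqref{eqn:pb}. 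Since $\Phi$ and both Poisson brackets are globally defined, agreement on a cover suffices. Finally, evaluating the iterated-bracket formula on the degree-$3$ function encoding the Courant structure yields $\Phi(\Theta_{M(E)})=\Theta$ with $\Theta$ as in \eqref{eqn:canon0}--\eqref{eqn:canon1}; the relation $d=-\pb{\Theta}{\cdot}$ (Theorem \ref{thm:pb}) is then automatically intertwined, so that $\Phi$ is even an isomorphism of differential graded Poisson algebras, recovering the differential of Theorem \ref{thm:CD->DGA} on the graded-manifold side.
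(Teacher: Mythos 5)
Your proposal is correct in substance, but it diverges from the paper's proof in two of its four steps, and the comparison is instructive. The first half coincides: your shuffle expansion of iterated brackets applied to a product is exactly the paper's Lemma \ref{lemma:der} (with $\star$ the multiplication), and well-definedness is likewise deduced from the graded Jacobi identity and \eqref{eqn:inversemetr}. The decisive difference is the Poisson property. The paper applies the \emph{same} Lemma \ref{lemma:der} a second time, now taking $\star$ to be the Poisson bracket of $\C(M(E))$ itself: since $\pb{\cdot}{e^\flat}$ and $\pb{\cdot}{f}$ are derivations of the bracket (graded Jacobi), the iterated brackets of $\pb{\omega}{\eta}$ expand globally over shuffles and are matched against \eqref{eqn:bullet}, \eqref{eqn:diamond} and \eqref{eqn:pb}. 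This completely bypasses what you identify as the main obstacle: one never needs to evaluate the bracket on degree-$2$ generators, so the non-splitting of the Atiyah sequence is irrelevant, and no localization, metric connection, or appeal to the Rothstein formulas of \cite{Roy4-GrSymp} is required. Your route is still viable---restriction to open sets is legitimate because cochains are differential operators in each argument (Proposition \ref{prop:1stOrd}), hence local---but it is longer and imports computations from the earlier paper, whereas the paper's argument is self-contained and coordinate-free. Conversely, your treatment of bijectivity, comparing the filtration $\{\C_i\}$ with the filtration of $\C(M(E))$ by momentum degree and observing that $\gr\Phi$ is the identity on $S_\R(\X^1[-2]\oplus\E^\vee[-1])$, is a clean structural alternative to the paper's, which obtains injectivity by reading off Taylor coefficients in local coordinates and surjectivity from Corollary \ref{cor:higherder} (the identity \eqref{eqn:higherder} holding internally in $\C(\E,\R)$). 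Your closing observation that $\Phi$ matches the canonical cocycles, and hence intertwines the differentials via Theorem \ref{thm:pb}, agrees with the paper's remarks, though it is not needed for the statement as given.
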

\begin{proof}
That $\Phi$ takes values in $\C(\E,\R)$ (i.e. the relations
\eqref{eqn:weakantisym} hold) is a consequence of the Jacobi
identity for $\pbr$ and \eqref{eqn:inversemetr}. That $\Phi$ is a
map of Poisson algebras follows by applying Lemma \ref{lemma:der} below to
$\star$ being first the product and then the Poisson bracket on
$\C(M(E))$. The injectivity of $\Phi$ amounts to the statement that
$\omega$ is uniquely determined by the functions $(\Phi\omega)_k$,
$k=0,1,\ldots,[\deg\omega/2]$; this is most easily seen in local
coordinates where these functions are just the Taylor coefficients
of $\omega$. Surjectivity is a consequence of Corollary
\ref{cor:higherder}.
\end{proof}

\begin{lem}\label{lemma:der}
Let $A$ be a $\KK$-module equipped with a bilinear operation
$\star:A\otimes A\To A$; let $D_1,\ldots,D_k:A\To A$ be $\KK$-linear derivations of $\star$, and let $D=D_1\cdots
D_n$. Then
$$
D(a\star b)=\sum_{i+j=k}\sum_{\sigma\in\sh(i,j)}(D_{\sigma(1)}\cdots
D_{\sigma(i)}a)\star(D_{\sigma(i+1)}\cdots D_{\sigma(k)}b)
$$
\end{lem}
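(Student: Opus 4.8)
The plan is to prove the identity by induction on $k$, the number of derivations, treating the composite as $D=D_1\cdots D_k$. The base case $k=1$ is precisely the defining property of a derivation of $\star$: one has $D_1(a\star b)=(D_1 a)\star b+a\star(D_1 b)$, and the right-hand side of the claimed formula is exactly the sum over the single shuffle in $\sh(1,0)$ and the single shuffle in $\sh(0,1)$, i.e. these two terms. (The case $k=0$ is the trivial identity $D(a\star b)=a\star b$.)

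For the inductive step I would write $D=D_1 D'$ with $D'=D_2\cdots D_k$ and apply the inductive hypothesis to the $k-1$ derivations $D_2,\ldots,D_k$. It is convenient first to reformulate the right-hand side: grouping each $(i,j)$-shuffle according to the set $S=\{\sigma(1),\ldots,\sigma(i)\}$ of indices sent to the first block, the double sum over $i+j=k$ and $\sigma\in\sh(i,j)$ collapses to a single sum over subsets $S\subseteq\{1,\ldots,k\}$, namely $D(a\star b)=\sum_{S}(D_S\,a)\star(D_{S^c}\,b)$, where $D_S$ denotes the composite of the $D_\ell$ with $\ell\in S$ applied in increasing order of index and $S^c$ is the complement. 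In this form the inductive hypothesis reads $D'(a\star b)=\sum_{T\subseteq\{2,\ldots,k\}}(D_T\,a)\star(D_{T^c}\,b)$, with $T^c=\{2,\ldots,k\}\setminus T$, and applying $D_1$ as a derivation to each summand yields $(D_1 D_T\,a)\star(D_{T^c}\,b)+(D_T\,a)\star(D_1 D_{T^c}\,b)$. Since $1$ precedes every element of $T$ and of $T^c$, prepending $D_1$ preserves the increasing order, so these two pieces equal $(D_{\{1\}\cup T}\,a)\star(D_{T^c}\,b)$ and $(D_T\,a)\star(D_{\{1\}\cup T^c}\,b)$ respectively.

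The combinatorial heart of the argument, and the step that will require the most care, is to verify that as $T$ ranges over all subsets of $\{2,\ldots,k\}$ these two families of terms reproduce each subset $S\subseteq\{1,\ldots,k\}$ exactly once: the first family gives precisely the subsets containing $1$ (via $S=\{1\}\cup T$, complement $T^c$), while the second gives precisely those not containing $1$ (via $S=T$, complement $\{1\}\cup T^c$). This is just the familiar fact that the smallest index must sit at the head of one of the two blocks of a shuffle. Summing the two families then produces $\sum_{S\subseteq\{1,\ldots,k\}}(D_S\,a)\star(D_{S^c}\,b)$, which is the desired formula, completing the induction. I would emphasize that no commutativity of the $D_\ell$ and no Koszul signs enter: $\star$ is an ungraded bilinear operation, and on each factor the derivations always act in the fixed increasing order, so the only genuine input beyond the derivation axiom is this index bookkeeping.
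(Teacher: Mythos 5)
Your proof is correct and is exactly the argument the paper intends: its proof of Lemma \ref{lemma:der} consists of the single word ``Induction,'' and your write-up (base case = the derivation axiom, inductive step splitting off $D_1$ and using the bijection between $(i,j)$-shuffles and subsets $S\subseteq\{1,\ldots,k\}$) fills in precisely those details, including the correct handling of the harmless typo $D=D_1\cdots D_n$ for $D_1\cdots D_k$. No further changes are needed.
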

\begin{proof}
Induction.
\end{proof}
If $A$ and the $D$'s are graded, the lemma holds with appropriate
Koszul signs put in place.

\subsection{Relation with ``naive cohomology"}\label{subsec:naive} Let $\E$ be a
Courant-Dorfman algebra, $\K=\ker\rho$,
$\bar{\E}=\E/\delta\Omega^1$. The map $(\cdot)^\flat:\E\to\E^\vee$
extends to
$$
(\cdot)^\flat:\Lambda_\R\K\To\C(\E,\R)
$$
whose image is actually contained in $\F_0$, in view of
\eqref{eqn:rho-delta}. For $\R=C^\infty(M_0)$, $\E=\Gamma(E)$ and
$\metr$ non-degenerate, this map is an isomorphism onto $\F_0$,
which in turn is isomorphic to $\C(\bar{\E},\R)$ (see Subsection
\ref{subsec:filtrF}). Sti\'{e}non and Xu \cite{StXu} defined a
differential on the algebra $\Lambda_\R\K$ (in view of this
isomorphism, it is just the standard differential for the
Lie-Rinehart algebra $\bar{\E}$) and called its cohomology the
``naive cohomology" of the Courant algebroid $E$. They conjectured
that, if $\rho$ is surjective, the inclusion
$$
\Phi^{-1}\circ(\cdot)^\flat:\Lambda_\R\K\To\C(M(E))
$$
is a quasi-isomorphism. This was proved by Ginot and Grutzmann
\cite{GinGru} who also obtained further results by considering the
spectral sequence associated to the filtration of $\C(M(E))$ by the
powers of what they called ``the naive ideal". This ideal corresponds
under $\Phi$ to the ideal $\I$ we defined in Subsection
\ref{subsec:filtrF}.

\begin{rem}
For general $(\R,\E,\metr)$ it is not known (and probably false)
that the image of $\Lambda_\R\K$ in $\C(\E,\R)$ is closed under $d$.
\end{rem}

\section{Concluding remarks, speculations and open
ends}\label{sec:concl}
In conclusion, let us mention a few important issues we have not
touched upon here, which we plan to address in a sequel (or sequels)
to this paper.

\subsection{The pre-symplectic structure} The algebra $\C(\E,\R)$
has an extra structure: a closed 2-form $\Xi\in\Omega^2_\C$ which
has degree 2 with respect to induced grading, and is $d$-invariant
in the sense that
\begin{equation}\label{eqn:Xi-d-inv}
L_d\Xi=0
\end{equation}
where $L$ is the Lie derivative operator on $\Omega_\C$. This
two-form exists on general principles: for strongly non-degenerate
$\metr$ it is just the inverse of the Poisson tensor \eqref{eqn:pb},
while for $\R=C^\infty(M_0)$ and $\E=\Gamma(E)$ the construction
from \cite{Roy4-GrSymp} yields $\Xi$ for an arbitrary $\metr$ (see
subsection \ref{subsec:grsymp} for a review). The formulas
\eqref{eqn:Xi} define the induced bilinear form on the tangent
complex $\T_\E$; its $\delta$-invariance \eqref{eqn:Xi-delta-inv} is
just the linearization of \eqref{eqn:Xi-d-inv}. By Dirac's formalism
\cite{Dir} adapted to the graded setting, the closed 2-form $\Xi$
induces a Poisson bracket on a certain subalgebra $\C_\flat(\E,\R)$
of $\C(\E,\R)$.

\subsection{Morphisms}
The functors we have constructed,
\begin{eqnarray*}
\mathbf{Met}_\R&\To&\mathbf{gra}_\R^{\mathrm{op}}\\
(\E,\metr)&\longmapsto&\C(\E,\R)
\end{eqnarray*}
and
\begin{eqnarray*}
\mathbf{CD}_\R&\To&\mathbf{dga}_\R^{\mathrm{op}}\\
(\E,\metr,\dl,\brac)&\longmapsto&(\C(\E,\R),d)
\end{eqnarray*}
are not fully faithful for two reasons. The first has to do with
infinite dimensionality issues: not all maps $\F^\vee\to\E^\vee$
come from maps $\E\to\F$, duals of tensor products are not tensor
products of duals, and so on. These issues can be dealt with by
calling those maps of duals which are duals of maps
\emph{admissible} and restricting attention only to such maps; one
can similarly define admissible derivations, and so on. Of course,
this only makes sense for objects in the image of the above functors.

However, even if we restrict attention to the finite-dimensional and
locally free case, the functors above are still not full. This is
because we have only defined \emph{strict} maps of Courant-Dorfman
algebras; the more general notion of a \emph{lax} map can be
obtained as admissible dg map preserving $\Xi$ in an evident way;
this way we can also describe maps of Courant-Dorfman algebras over
different base rings.

Finally, we have defined (strict) morphisms from Lie-Rinehart to
Courant-Dorfman algebras and back, but no category containing both
kinds of algebras as objects. This problem can be solved by
introducing ``Lie-Rinehart 2-algebras" (algebraic analogues of Lie
2-algebroids) and their weak (and maybe also higher) morphisms,
which can again be reduced to studying dg algebras of a certain kind
and admissible dg morphisms between them.

\subsection{Modules} We have not defined the notion of a module over
a Courant-Dorfman algebra and cohomology with coefficients, except
in the trivial module $\R$. Again, this can be done by analyzing
(the derived category of) dg modules over the dga $\C(\E,\R)$ and
trying to describe them explicitly in terms of $\E$. It is not clear
though what, if any, compatibility with $\Xi$ we should require.

\subsection{The Courant-Dorfman operad.}
The infinite-dimensionality problems mentioned above arise because
our construction of the algebra $\C(\E;\R)$ involves dualization. It
seems more natural to try to construct some sort of coalgebra
instead. In operad theory, Koszul duality provides a systematic way
of obtaining such a differential graded coalgebra from an algebra
over a given \emph{quadratic} operad. There is a an operad, $\C\D$,
on the set of two colors, whose algebras are Courant-Dorfman
algerbras; as operads go, this is a pretty nasty one: inhomogeneous
cubic, so Koszul duality does not apply. However, if $\metr$ is
non-degenerate, one can replace $\dl$ by an action of $\E$ on $\R$
via the anchor $\rho$ and get rid of the offending relations, ending
up with an algebra over a nice homogeneous quadratic operad (this is
actually the formulation given in \cite{Roy4-GrSymp}). Of course,
non-degeneracy is not a condition that can be expressed in operadic
terms; more importantly, even if we ignore this and try to apply
Koszul duality to the resulting quadratic operad, we will get a
wrong answer, because we are really interested in Courant-Dorfman
structures over a \emph{fixed} underlying metric module (which we
can assume to be non-degenerate if we want to). What is relevant in
this situation (which also arises in several other contexts we know
of) is a kind of \emph{relative} deformation theory for algebras
over a \emph{pair} of operads $P\subset Q$, where we want to vary
the $Q$-algebra structure while keeping the underlying $P$-structure
fixed. As far as we are aware, such a theory is not yet available,
but it would be interesting and useful to try to develop it.

\appendix

\section{K\"{a}hler differential forms and multiderivations.}\label{app:kaehler}

Let $\R$ be a commutative $\KK$-algebra, $\M$ an $\R$-module. A
$\KK$-linear map
$$D:\R\To\M$$
is called a \emph{derivation} if it satisfies the Leibniz rule:
$$D(fg)=(Df)g+f(Dg)\quad\forall f,g\in\R$$
$\M$-valued derivations form an $\R$-module denoted
$\mathrm{Der}(\R,\M)$; the assignment is functorial in $\M$.

The functor $\M\mapsto\mathrm{Der}(\R,\M)$ is (co)representable:
there exists an $\R$-module $\Omega^1=\Omega^1_\R$, unique up to a
unique isomorphism, together with a natural (in $\M$) isomorphism of
$\R$-modules
$$\mathrm{Der}(\R,\M)\simeq\Hom_\R(\Omega^1,\M)$$
In particular, putting $\M=\Omega^1$, the identity map on the right
hand side corresponds to the universal derivation
$$d_0:\R\To\Omega^1$$
$\Omega^1$ is referred to as the \emph{module of K\"{a}hler
differentials}; it can be described explicitly as consisting of
formal finite sums of terms of the form $fd_0g$ with $f,g\in\R$,
subject to the Leibniz relation
$$d_0(fg)=(d_0f)g+fd_0g$$
The algebra of K\"{a}hler differential forms is obtained by taking
$\Omega=\{\Omega^k\}_{k\geq 0}$ with
$\Omega^k=\Lambda_\R^k\Omega^1$. It is associative and
graded-commutative with respect to exterior multiplication. The
universal derivation $d_0$ extends to an odd derivation of $\Omega$
satisfying $d_0^2=0$, called the \emph{de Rham differential}, or the
\emph{exterior derivative}. The algebra of K\"{a}hler differential
forms is the universal differential algebra containing $\R$.

The module $\X^1=\mathrm{Der}(\R,\R)$ forms a Lie algebra under the
commutator bracket $\{\cdot,\cdot\}$. By the universal property of
$\Omega^1$ one has
$$\X^1\simeq\Hom_\R(\Omega^1,\R)=(\Omega^1)^\vee$$
Given $v\in\X^1$, we denote the corresponding operator on the right
hand side by $\iota_v$. It extends to a unique odd derivation of
$\Omega$, denoted by the same symbol. The Lie derivative operator is
defined by the Cartan formula
$$L_v=\pb{d_0}{\iota_v}$$
The operators $\iota_v$, $L_v$ and $d_0$ are subject to the usual
Cartan commutation relations
$$\pb{\iota_v}{\iota_w}=0;\quad\pb{L_v}{\iota_w}=\iota_{\{v,w\}};\quad\pb{L_v}{L_w}=L_{\{v,w\}},$$
describing an action of the differential graded Lie algebra
$T[1]\X^1=\X^1[1]\oplus\X^1$ on $\Omega$.

K\"{a}hler differential forms should be distinguished from the usual
differential forms
$\widetilde{\Omega}_\R=\{\widetilde{\Omega}^k\}_{k\geq 0}$ on $\R$,
where $\widetilde{\Omega}^k$ is defined as the module of alternating
$k$-multilinear functions on $\X^1$:
$$\widetilde{\Omega}^k=\Hom_\R(\Lambda_\R\X^1,\R)$$
Of course, one has the canonical inclusion
$$\Omega=\Lambda_\R\Omega^1\hookrightarrow(\Lambda_\R(\Omega^1)^\vee)^\vee=\widetilde{\Omega}$$
which generally fails to be an isomorphism unless $\R$ satisfies
certain finiteness conditions. Nevertheless, the exterior
multiplication and differential $d_0$ extend to $\widetilde{\Omega}$
and are defined by the usual Cartan formulas.

Let $\X^0=\R$ and, for $k>0$, let $\X^k$ denote the $\R$-module of
symmetric $k$-derivations of $\R$, that is, symmetric $k$-linear
forms (over $\KK$) on $\R$ with values in $\R$ which are derivations
in each argument. Again, by abstract nonsense we have
$$\X^k\simeq\Hom_\R(S_\R^k\Omega^1,\R)$$
The function on the right hand side corresponding to a
$k$-derivation $H$ on the left will be denoted by $\bar{H}$, so that
$$H(f_1,\ldots,f_k)=\bar{H}(d_0f_1,\ldots,d_0f_k).$$
The graded module  of symmetric multi-derivations,
$\X=\{\X^k\}_{k\geq 0}$, forms a graded commutative algebra over
$\R$ (if we assign to elements of $\X^k$ degree $2k$); the
multiplication is given by the following explicit formula:
\begin{equation}\label{eqn:prodcan}
HK(f_1,\ldots,f_{i+j})=\sum_{\tau\in\mathrm{sh}(i,j)}H(f_{\tau(1)},\ldots,f_{\tau(i)})K(f_{\tau(i+1)},\ldots,f_{\tau(i+j)})
\end{equation}
Furthermore, $\X$ has a natural Poisson bracket, extending the
commutator of derivations and the natural action of $\X^1$ on $\R$;
it is given by the formula:
\begin{equation}\label{eqn:pbcan}
\{H,K\}=H\circ K-K\circ H
\end{equation} where
\begin{equation}\label{eqn:ins0}
H\circ K(f_1,\ldots,f_{i+j-1})
=\sum_{\tau\in\mathrm{sh}(i,j-1))}H(K(f_{\tau(1)},\ldots,f_{\tau(i)}),f_{\tau(i+1)},\ldots,f_{\tau(i+j-1)})
\end{equation}
for $H\in\X^i$, $K\in\X^j$. This Poisson bracket has degree -2 with
respect to the grading just introduced.

Given an $\alpha\in\Omega^1$, denote by $\iota_\alpha$ the evident
contraction operator on $\X$. It is a derivation of the
multiplication, but not of the Poisson bracket, unless
$d_0\alpha=0$.

\section{Lie-Rinehart algebras.}\label{app:LR}
\begin{defn}
A Lie-Rinehart algebra consists of the following data:
\begin{itemize}
\item a commutative $\KK$-algebra $\R$;
\item an $\R$-module $\L$;
\item an $\R$-module map $\rho:\L\To\X^1=\Der(\R,\R)$, called the \emph{anchor};
\item a $\KK$-bilinear Lie bracket
$\brac:\L\otimes\L\To\L$.
\end{itemize}
These data are required to satisfy the following additional
conditions:
\begin{enumerate}
\item $[x_1,fx_2]=f[x_1,x_2]+(\rho(x_1)f)x_2$;
\item $\rho([x_1,x_2])=\pb{\rho(x_1)}{\rho(x_2)}$
\end{enumerate}
for all $x_1,x_2\in\L$, $f\in\R$.

A \emph{morphism} of Lie-Rinehart algebras over $\R$ is a map of the
underlying $\R$-modules commuting with anchors and brackets in an
obvious way. Lie-Rinehart algebras over $\R$ form a category denoted
by $\mathbf{LR}_\R$.
\end{defn}

\begin{eg}
$\X^1=\Der(\R,\R)$ becomes a Lie-Rinehart algebra with respect to
the commutator bracket $\pbr$ and the identity map $\X^1\To\X^1$ as
the anchor. This is the terminal object in $\mathbf{LR}_\R$: the
anchor of each Lie-Rinehart algebra gives the unique map.
\end{eg}

\begin{eg}
Let $\M$ be an $\R$-module; a \emph{derivation of} $\M$ is a pair
$(D,\sigma)$, where $D:\M\To\M$ is a $\KK$-linear map and
$\sigma=\sigma_D\in\Der(\R,\M)$, satisfying the following
compatibility condition:
$$
D(fm)=fD(m)+\sigma(f)m
$$
Derivations of $\M$ form an $\R$-module which we denote $\Der(\M)$;
moreover, $\Der(\M)$ is a Lie-Rinehart algebra with respect to the
commutator bracket and the anchor $\pi$ given by the assignment
$(D,\sigma)\mapsto\sigma$.
\end{eg}
\begin{defn}
A \emph{representation} of a Lie-Rinehart algebra $\L$ on an
$\R$-module $\M$ is a map of Lie-Rinehart algebras
$\nabla:\L\To\Der(\M)$. In other words, $\nabla$ assigns, in an
$\R$-linear way, to each $x\in\L$ a derivation $(\nabla_x,\rho(x))$
such that
$$
\nabla_{[x,y]}=\pb{\nabla_x}{\nabla_y}
$$
An $\R$-module $\M$ equipped with a representation of $\L$ is said
to be an $\L$-module.
\end{defn}

\begin{eg}
For every Lie-Rinehart algebra $\L$, $\R$ is an $\L$-module with
$\nabla=\rho$.
\end{eg}

\begin{eg}
Let $\L$ be a Lie-Rinehart algebra and let $\K=\ker(\rho)$. Then
$\K$ becomes an $\L$-module with
$$
\nabla_x(y)=[x,y]
$$
for $x\in\L$, $y\in\K$. Moreover, $\K$ is a Lie algebra over $\R$
with respect to the restricted bracket, and $\nabla$ acts by
derivations of this bracket.
\end{eg}

Given an $\L$-module $\M$, one defines for each $q\geq 0$ the module
of $q$-cochains on $\L$ with coefficients in $\M$ to be
$$
\widetilde{\Omega}^q(\L,\M)=\Hom_\R(\Lambda_\R\L,\M).
$$
The differential
$d:\widetilde{\Omega}^q(\L,\M)\To\widetilde{\Omega}^{q+1}(\L,\M)$ is
given by the standard (Chevalley-Eilenberg-Cartan-de Rham) formula
\begin{equation}\label{eqn:d_LR}
d\eta(x_1,\ldots,x_{q+1})=\sum_{i=1}^{q+1}(-1)^{i-1}\nabla_{x_i}\eta(x_1,\ldots,\hat{x}_i,\ldots,x_{q+1})+
\end{equation}
$$
+\sum_{i<j}(-1)^{i+j}\eta([x_i,x_j],x_1,\ldots,\hat{x}_i,\ldots,\hat{x}_j,\ldots,x_{q+1})
$$

\begin{rem}
Notice that, for $\L=\X^1$ and $\M=\R$, this yields
$\widetilde{\Omega}_\R$, rather than $\Omega_\R$. It is possible
(and probably more correct in general) to consider the complex
$\Omega(\L,\M)$ with differential given by the universal property of
the K\"{a}hler forms.
\end{rem}

\begin{rem}
The term ``Lie-Rinehart algebra" is due to J. Huebschmann
\cite{Hueb-PoisCoh}, and is based on the work of G.S. Rinehart who
studied these structures in a seminal paper \cite{Rinehart}
(although Rinehart himself referred to earlier work of Herz and
Palais).
\end{rem}

\section{Leibniz algebras, modules and
cohomology.}\label{app:Leibniz}

This section follows Loday and Pirashvili \cite{LodPir} closely. A
\emph{Leibniz algebra} over $\KK$ is a $\KK$-module $\E$ equipped
with a bilinear operation
$$\brac:\E\otimes\E\To\E$$
satisfying the following version of the Jacobi identity:
$$[e_1,[e_2,e_3]]=[[e_1,e_2],e_3]+[e_2,[e_1,e_3]]$$
(i.e., $[e,\cdot]$ is a derivation\footnote{In fact, this defines a
\emph{left} Leibniz algebra, whereas Loday and Pirashvili considered
\emph{right} Leibniz algebras, in which $[\cdot,e]$ is a right
derivation of $\brac$. The assignment $\brac\To\brac^{\mathrm{op}}$
where
$$
[x,y]^{\mathrm{op}}=-[y,x]
$$
establishes an isomorphism of the categories of these two kinds of
Leibniz algebras; the formulas for modules and differentials have to
be modified accordingly.} of $\brac$ for each $e\in\E$).

Given a Leibniz algebra $\E$, an $\E$-\emph{module} is a
$\KK$-module $\M$ equipped with two structure maps: a left action
\begin{eqnarray*}
\E\otimes\M&\To&\M\\
(e,m)&\mapsto&[e,m]
\end{eqnarray*}
and a right action
\begin{eqnarray*}
\M\otimes\E&\To&\M\\
(m,e)&\mapsto&[m,e]
\end{eqnarray*}
satisfying the following equations:
\begin{eqnarray*}
{[e_1,[e_2,m]]}&=&[[e_1,e_2],m]+[e_2,[e_1,m]]\\
{[e_1,[m,e_2]]}&=&[[e_1,m],e_2]+[m,[e_1,e_2]]\\
{[m,[e_1,e_2]]}&=&[[m,e_1],e_2]+[e_1,[m,e_2]]
\end{eqnarray*}
Maps of $\E$-modules are defined in an obvious way.

Given any Leibniz algebra $\E$, a left $\E$-action on $\M$
satisfying the first of the above three equations can be extended to
an $\E$-module structure in two standard ways, by defining the right
action either by
$$[m,e]:=-[e,m]$$
or by
$$[m,e]=0$$
Following Loday and Pirashvili, we call the first one
\emph{symmetric}, the second -- \emph{antisymmetric}.

Given a Leibniz algebra $\E$ and an $\E$-module $\M$, define the
complex of cochains on $\E$ with values in $\M$ by setting, for
$q\geq 0$,
$$
\C^q_{\mathrm{LP}}(\E,\M)=\Hom(\E^{\otimes^q},\M)
$$
with the differential
$$
d_{\mathrm{LP}}:\C^q_{\mathrm{LP}}(\E,\M)\To\C^{q+1}_{\mathrm{LP}}(\E,\M)
$$
given by
\begin{equation}\label{eqn:d_LP}
d_{\mathrm{LP}}\eta(e_1,\ldots,e_{q+1})=\sum_{i=1}^q(-1)^{i-1}[e_i,\eta(\ldots,\hat{e}_i,\ldots)]+
\end{equation}
$$
+(-1)^{q+1}[\eta(e_1,\ldots,e_q),e_{q+1}]+\sum_{i<j}(-1)^i\eta(e_1,\ldots,\hat{e}_i,\ldots,\hat{e}_j,[e_i,e_j],e_{j+1},\ldots,e_{q+1})
$$
If the module $\M$ is symmetric, this reduces to
\begin{equation}\label{eqn:d_LP_sym}
d_{\mathrm{LP}}\eta(e_1,\ldots,e_{q+1})=\sum_{i=1}^{q+1}(-1)^{i-1}[e_i,\eta(\ldots,\hat{e}_i,\ldots)]+
\end{equation}
$$
+\sum_{i<j}(-1)^i\eta(e_1,\ldots,\hat{e}_i,\ldots,\hat{e}_j,[e_i,e_j],e_{j+1},\ldots,e_{q+1})
$$

\providecommand{\bysame}{\leavevmode\hbox to3em{\hrulefill}\thinspace}
\providecommand{\MR}{\relax\ifhmode\unskip\space\fi MR }
\providecommand{\MRhref}[2]{%
  \href{http://www.ams.org/mathscinet-getitem?mr=#1}{#2}
}
\providecommand{\href}[2]{#2}

\end{document}